\newcommand{\C}{\mathbb{C}}
\newcommand{\F}{\mathbb{F}}
\newcommand{\I}{\mathbb{I}}
\newcommand{\N}{\mathbb{N}}
\newcommand{\Q}{\mathbb{Q}}
\newcommand{\Z}{\mathbb{Z}}
\newcommand{\bbS}{\mathbb{S}}
\renewcommand{\k}{\kappa}
\newcommand{\e}{\varepsilon}
\newcommand{\G}{\Gamma}
\newcommand{\g}{\gamma}
\renewcommand{\L}{\Lambda}
\renewcommand{\leq}{\leqslant}
\renewcommand{\geq}{\geqslant}
\newcommand{\ds}{\displaystyle}
\newcommand{\liminv}{\displaystyle \lim_{\leftarrow}}
\newcommand{\calc}{\mathcal C}
\newcommand{\calf}{\mathcal F}
\newcommand{\cali}{\mathcal I}
\newcommand{\calm}{\mathcal M}
\newcommand{\calr}{\mathcal R}
\newcommand{\CaCl}{\mathcal{C}\ell}
\newcommand{\calCl}{\mathcal{C}\ell^{\,0}}
\newcommand{\pr}{\mathfrak p}
\newcommand{\gota}{\mathfrak a}
\newcommand{\gotb}{\mathfrak b}
\newcommand{\gotq}{\mathfrak q}
\newcommand{\gotI}{\mathfrak I}
\newcommand{\gotM}{\mathfrak M}
\newcommand{\bmu}{\boldsymbol\mu}
\newcommand{\sri}{\twoheadrightarrow}
\newcommand{\iri}{\hookrightarrow}
\newcommand{\ov}{\overline}
\newcommand{\wt}{\widetilde}
\newcommand{\wh}{\widehat}
\newcommand{\il}[1]{\lim_{\buildrel \longleftarrow\over{#1}}}
\newcommand{\plim}[1]{\displaystyle{\lim_{\stackrel{\longleftarrow}{#1}}}\,}
\DeclareMathOperator{\Gal}{Gal}
\DeclareMathOperator{\Hom}{Hom}
\DeclareMathOperator{\Fitt}{Fitt}
\DeclareMathOperator{\Ext}{Ext}
\DeclareMathOperator{\Inf}{Inf}
\DeclareMathOperator{\Ker}{ker}
\DeclareMathOperator{\Fr}{Fr}
\theoremstyle{plain}
\newtheorem{thm}{Theorem}[section]
\newtheorem{cor}[thm]{Corollary}
\newtheorem{lem}[thm]{Lemma}
\newtheorem{prop}[thm]{Proposition}
\newtheorem{quest}[thm]{Open Question}
\theoremstyle{definition}
\newtheorem{defin}[thm]{Definition}
\newtheorem{rem}[thm]{Remark}
\newtheorem{rems}[thm]{Remarks}
\title[IMC for the Carlitz cyclotomic extension and applications]
{Iwasawa main conjecture for the Carlitz cyclotomic extension and applications}
\author[B. Angl\`es] {Bruno Angl\`es}
\address{Bruno Angl\`es: LMNO, Universit\'e de Caen BP 5186, 14032 Caen Cedex. France} \email{bruno.angles@unicaen.fr }
\author[A. Bandini] {Andrea Bandini}
\address{Andrea Bandini: Dipartimento di Matematica e Informatica, Universit\`a degli Studi di Parma\\
Parco Area delle Scienze, 53/A - 43124 Parma (PR), Italy}
\email{andrea.bandini@unipr.it}
\author[F. Bars] {Francesc Bars}
\address{Francesc Bars: Departament de Matem\`atiques, Facultat de Ciencies, Universitat Aut\`onoma de Barcelona\\
08193 Bellaterra (Barcelona), Catalonia}
\email{francesc@mat.uab.cat}
\thanks{F. Bars supported by by MTM2013-40680-P.}
\author[I. Longhi]{Ignazio Longhi}
\address{Ignazio Longhi: Department of Mathematical Sciences,
Xi'an Jiaotong-Liverpool University \\
111 Ren Ai Road, Dushu Lake Higher Education Town\\
Suzhou Industrial Park, Suzhou, Jiangsu\\
215123, China}
\email{Ignazio.Longhi@xjtlu.edu.cn}
\begin{document}

\begin{abstract}
We prove an Iwasawa Main Conjecture for the class group of the $\pr$-cyclo\-to\-mic extension $\calf$ of the function field
$\F_q(\theta)$ ($\pr$ is a prime of $\F_q[\theta]$), showing that its Fitting ideal is generated by a Stickelberger element.
We use this and a link between the Stickelberger element and a $\pr$-adic $L$-function to prove a close analog of the
Ferrero-Washington Theorem for $\calf$ and to provide information on the $\pr$-adic valuations of the Bernoulli-Goss numbers
$\beta(j)$ (i.e., on the values of the Carlitz-Goss $\zeta$-function at negative integers).
\end{abstract}

\keywords{Stickelberger series; Carlitz-Goss $\zeta$-function;
$L$-functions;  Bernoulli-Goss numbers; class groups; Iwasawa Main
Conjecture; function fields; cyclotomic extensions.}

\subjclass[2010]{11R60; 11R23; 11M38; 11S40; 11R58}

\maketitle

\section{Introduction}\label{sec:intro}
Let $F=\F(\theta)$ (where $\F$ is a finite field of cardinality $q=p^e\geqslant 3$, with $p$ prime\,\footnote{We need $q>2$ in
order to have non-trivial characters of the group $\Delta$ (see below).}) and fix as $\infty$ the place corresponding to
$\frac{1}{\theta}$. The ring of functions regular outside $\infty$ is
$A=\F[\theta]$ and we denote by $A_+$ the set of monic polynomials of $A$. Consider the {\em Carlitz-Goss $\zeta$-function}
\[ \zeta_A(s):=\sum_{a\in A_+} a^{-s}\ \ , \ s\in \C_\infty^*\times \Z_p \]
(see Section \ref{CGBG} for the precise definition of all terms). This
function represents the function field version of the Riemann $\zeta$-function for number fields: its values have analogous
relevant arithmetical interpretations and have been the subject of investigations in recent years (see, for example, Taelman's results in
\cite{Ta1} and \cite{Ta2}). There is a canonical embedding $\Z \iri\C_\infty^*\times \Z_p$ sending $j$ to $(\theta^j,j)$ and we usually
write $\zeta_A(j)$ for the value $\zeta_A(\theta^j,j)$.
Our main goal is to provide some information on special values at negative integers by studying the associated {\em Bernoulli-Goss
numbers} $\beta(j)\in A$ (see Definition \ref{DefZXjBGPol}). To do this we first prove an Iwasawa Main Conjecture for the
$\pr$-cyclotomic extension $\calf$ of $F$ ($\pr$ a prime of $A$), which will provide a link between a $\pr$-adic $L$-function
interpolating the values of $\zeta_A$ and a Stickelberger element related to class groups of subextensions of $\calf$.

Let $\Phi$ be the Carlitz module associated with $A$, fix a prime $\pr$ of $A$ and denote by $\Phi[\pr^r]$ the $\pr^r$-torsion
of the Carlitz module (see Section \ref{Not} or \cite[Chapter 12]{Ro}). Let
$F_n:=F(\Phi[\pr^{n+1}])$ and $\calf:=\cup_n F_n$. Then $\calf/F$ is a Galois extension ramified only at $\pr$ and $\infty$,
whose Galois group is isomorphic to $\Delta\times \Z_p^\infty$, with $\Delta$
cyclic of order $q^d-1$, where $d:=\deg(\pr)$. In Section \ref{SecTwoChar} we are going to define two Teichm\"uller characters
of $\Delta$, named $\omega_\pr$ and $\wt{\omega}_\pr$, taking values in characteristic $p$ and $0$ respectively. For any $y\in
\Z_p$ and $0\leq i\leq q^d-2$ there exists a $\pr$-adic $L$-function $L_\pr(X,y,\omega_\pr^i) \in A_\pr[[X]]$, which
interpolates special values of $\zeta_A$; indeed there exist polynomials $Z(X,i) \in A[X]$ such that
\begin{equation}\label{EqIntro1}
Z(1,i)=\zeta_A(-i) \quad {\rm and} \quad L_\pr(X,y,\omega_\pr^i) \equiv Z(X,i) \pmod{\pr}
\end{equation}
(see Section \ref{SecprLfun}).

On the algebraic side we study $\calCl(F_n)\{p\}$, that is, the $p$-part of the group of classes of degree zero divisors of $F_n$.
Let $X_{F_n}$ be the curve associated with the field $F_n$ and denote by $T_p(F_n):=T_p(Jac(X_{F_n})(\ov{\F}))$
the $p$-adic Tate module of its Jacobian;
then $\calCl(F_n)\{p\}$ can be recovered as $G_\F$-coinvariants of $T_p(F_n)$ (see Lemma \ref{FromTpToCn}; here $G_\F:=\Gal(\ov{\F}/\F)$,
with $\ov{\F}$ a fixed algebraic closure of $\F$).
Let $\chi$ be any character defined on $\Delta$ (hence $\chi=\wt{\omega}_\pr^i$ for some $0\leq i\leq q^d-2$) and
let $e_\chi$ be the associated idempotent: for any module $M$ we shall denote by $M(\chi):=e_\chi M$ the $\chi$-component of $M$.
Note that all these characters have values in $W$, the Witt ring of the residue field $\F_\pr$ at $\pr$, so, from now on, we consider
algebras over the ring $W$. Moreover we recall that such characters are called {\em even} if $q-1$
divides $|\ker(\chi)|$ and {\em odd} otherwise.

Define the {\em Stickelberger series} for $F_n/F$ as
\[ \Theta_n(X):= \prod_{\gotq\in \mathscr{P}_F-\{\pr,\infty\}} (1-\Fr_\gotq^{-1}X^{\deg(\gotq)})^{-1} \ ,\]
where $\mathscr{P}_F$ is the set of places of $F$ and $\Fr_\gotq\in\Gal(F_n/F)$ is the Frobenius at $\gotq$.
For any character $\chi$ we put $\Theta_n(X,\chi)$ for the series verifying $\Theta_n(X,\chi)e_\chi=e_\chi\Theta_n(X)$.

\noindent Using some recent results of C. Greither and C.D. Popescu (see \cite{GP1} and \cite{GP2}) we are able to compute
the Fitting ideals of the Tate modules over the algebra $W[\Gal(F_n/F_0)][[G_\F]]$ (see Proposition \ref{FittTFn}).

\begin{prop} \label{PropIntro2}
We have
\begin{equation}\label{EqIntro3} \Fitt_{W[\Gal(F_n/F)][[G_\F]]} \big((W\otimes T_p(F_n))(\chi)\big) =
(\Theta_n^\#(\g^{-1},\chi)) \ ,\end{equation}
where
\[\Theta_n^\#(\g^{-1},\chi):= \left\{ \begin{array}{cl} \Theta_n(\g^{-1},\chi) & {\rm if}\ \chi\ {\rm is\ odd} \\
\ & \\
\frac{\Theta_n(\g^{-1},\chi)}{1-\g^{-1}} & {\rm if}\
\chi\neq \chi_0 \ {\rm is \ even} \end{array} \right. \ ,\]
and $\g$ is the arithmetic Frobenius in $G_\F\,$.
\end{prop}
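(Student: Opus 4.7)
The plan is to derive \eqref{EqIntro3} by specializing the equivariant main conjecture for abelian covers of smooth projective curves over finite fields, proved by Greither and Popescu in \cite{GP1,GP2}. Their theorem, applied to the Galois cover $X_{F_n}\to X_F$ (whose only ramified places are $\pr$ and $\infty$), identifies the Fitting ideal of $W\otimes T_p(F_n)$ over $W[\Gal(F_n/F)][[G_\F]]$ with the ideal generated by an equivariant $L$-element, built as an Euler product over the places of $F$ that are unramified on the $\chi$-component in question and evaluated at $X=\g^{-1}$.

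First I would rewrite the Greither--Popescu generator in the notation of the paper. Discarding the (ramified) places $\pr$ and $\infty$ reproduces, up to the change of variable $X\mapsto\g^{-1}$, exactly the product $\prod_{\gotq\in\mathscr{P}_F-\{\pr,\infty\}}(1-\Fr_\gotq^{-1}\g^{-\deg(\gotq)})^{-1}=\Theta_n(\g^{-1})$; projecting via $e_\chi$ then yields $\Theta_n(\g^{-1},\chi)$ and settles the odd case immediately.

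Next I would analyse the behaviour at $\infty$ to justify the correction factor when $\chi$ is nontrivial and even. The inertia of $\infty$ in $\Delta$ is the unique cyclic subgroup of order $q-1$, so the characters trivial on it are precisely those with $q-1\mid|\ker(\chi)|$, i.e.\ the even ones; on their $\chi$-components the place $\infty$ is effectively unramified, and the Greither--Popescu formula does include the Euler factor $(1-\Fr_\infty^{-1}X^{\deg(\infty)})^{-1}$. Since $\deg(\infty)=1$ and $\Fr_\infty$ acts as the arithmetic Frobenius $\g$ on a component where inertia acts trivially, at $X=\g^{-1}$ this factor becomes $(1-\g^{-1})^{-1}$; multiplying $\Theta_n(\g^{-1},\chi)$ by it produces exactly the division by $(1-\g^{-1})$ appearing in the statement. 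For odd $\chi$ the place $\infty$ is genuinely ramified on the $\chi$-component, no such Euler factor appears, and the formula stands unaltered. (The case $\chi=\chi_0$ is excluded because the corresponding $\chi$-component of the divisor degree map intervenes and has to be handled separately.)

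The main obstacle I expect is the technical bookkeeping: aligning the conventions of \cite{GP1,GP2} (the choice of geometric vs.\ arithmetic Frobenius, the passage from the $\ell$-adic cohomology of $X_{F_n}$ to $W\otimes T_p(F_n)$, and the selected set of ``bad'' Euler factors) with the Euler product defining $\Theta_n(X)$, and then reading off the Fitting ideal on each $\chi$-component while keeping careful track of the decomposition and inertia at $\infty$ according to the parity of $\chi$.
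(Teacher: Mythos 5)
Your proposal misstates what the Greither--Popescu theorem actually gives, and this creates a real gap. Their result (Theorem \ref{ThmFittStick} in the paper, i.e.\ \cite[Theorem 4.3]{GP1}) computes the Fitting ideal of the $p$-adic Tate module $T_p(\calm_{\ov{S}_n})$ of the Deligne Picard $1$-motive, \emph{not} of the Jacobian Tate module $T_p(F_n)$. These two modules differ by the exact sequence \eqref{SeqL},
$0\to T_p(F_n)\to T_p(\calm_{\ov{S}_n})\to D_n\to 0$, where $D_n:=Div^0(\ov{S}_n)\otimes\Z_p$. The set $S=\{\pr,\infty\}$ is fixed in the construction of the $1$-motive; there is no version of the theorem in which one ``removes $\infty$ from $S$ on even components'' and gets a Fitting ideal of $T_p(F_n)$ directly. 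Your heuristic of re-inserting the Euler factor $(1-\Fr_\infty^{-1}\g^{-1})^{-1}=(1-\g^{-1})^{-1}$ is suggestive of the right answer (indeed $\Theta_n^\#(\g^{-1},\chi)$ for even $\chi$ does look like a Stickelberger element with $\infty$ removed from the bad set), but it is not an argument and does not explain why the Fitting ideal of $T_p(F_n)(\chi)$ should obey that recipe.

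The proof the paper actually runs is: (i) compute the $\chi$-components $(W\otimes D_n)(\chi)$ via the modules $H_{v,n}=\Z_p[F_n^{ar}(v)]$ for $v\in\{\pr,\infty\}$ (Section \ref{ChiPartsL}), obtaining that $(W\otimes D_n)(\chi)$ vanishes for odd $\chi$ and equals the cyclic module $W[\G_n][[G_\F]]/(1-\g^{-1})$ for even $\chi\neq\chi_0$; (ii) for odd $\chi$, conclude immediately that $(W\otimes T_p(F_n))(\chi)\simeq(W\otimes T_p(\calm_{\ov{S}_n}))(\chi)$, giving $\Theta_n(\g^{-1},\chi)$; (iii) for even $\chi\neq\chi_0$, apply \cite[Lemma 3]{CG} (Cornacchia--Greither) to the short exact sequence with cyclic cokernel to obtain $\Fitt((W\otimes T_p(F_n))(\chi))\cdot(1-\g^{-1})=(\Theta_n(\g^{-1},\chi))$. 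Step (iii) is exactly the missing ingredient: passing Fitting ideals along a short exact sequence with a cyclic quotient is a nontrivial fact that must be invoked explicitly. You would need to introduce the sequence \eqref{SeqL}, compute $(W\otimes D_n)(\chi)$ in both parity cases, and cite something like the Cornacchia--Greither lemma (or the more general \cite[Lemma 2.4]{GP2}); otherwise the correction by $(1-\g^{-1})$ for even characters is unproved.
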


\begin{rem}\label{RemIntro1}
The trivial character $\chi_0$ needs (as usual) some special treatment due to the presence of the ramified place $\pr$
in the $\chi_0$-component of the relevant Iwasawa modules. In this paper we shall provide some results on the $\chi_0$-component
as well but we leave the precise statements to the following sections (see, in particular, Corollary \ref{FittTrivChar}
and Remark \ref{RemChi0}),
since they require a few more notations and all the main arithmetical applications will involve only the $\chi$-components
for $\chi\neq \chi_0$.
\end{rem}

Then, in Section \ref{SecFittInf}, we prove all the compatibility relations needed to compute inverse limits in equality
(\ref{EqIntro3}). Putting $\Lambda:=W[[\Gal(\calf/F_0)]]$, we obtain (see Theorem \ref{FittCalmCalf})

\begin{thm}\label{IntroIMC1}
For any $\chi\neq \chi_0$, $T_p(\calf)(\chi):=\plim{n} T_p(F_n)(\chi)$ is a finitely generated torsion $\Lambda[[G_\F]]$-module and
\[ \Fitt_{\Lambda[[G_\F]]}\left( T_p(\calf)(\chi) \right) =
\il{n} (\Theta^\#_n(\gamma^{-1},\chi)) = :(\Theta^\#_\infty(\gamma^{-1},\chi)) \ . \]
\end{thm}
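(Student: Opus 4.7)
The plan is to pass to the inverse limit in the finite-level identity of Proposition \ref{PropIntro2}. The argument has three ingredients: norm-compatibility of the Stickelberger elements along the tower, commutation of Fitting ideals with the projective limit, and extraction of torsion and finite generation from the principality of the generator.

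First I would check that the elements $\Theta_n^{\#}(\g^{-1},\chi)$ form a compatible sequence under the natural surjections $W[\Gal(F_{n+1}/F)][[G_\F]]\sri W[\Gal(F_n/F)][[G_\F]]$. Since the Euler product defining $\Theta_n(X)$ involves only primes $\gotq\ne\pr,\infty$, all of which are unramified in $\calf/F$, the Frobenius $\Fr_\gotq$ at level $n+1$ restricts to the Frobenius at level $n$, hence every Euler factor projects to its analogue at the lower level. Multiplying by $e_\chi$ and, in the even case, dividing by $1-\g^{-1}$ preserves this compatibility, so the family $\{\Theta_n^{\#}(\g^{-1},\chi)\}_n$ defines an element $\Theta_\infty^{\#}(\g^{-1},\chi)\in\Lambda[[G_\F]]$ with $(\Theta_\infty^{\#}(\g^{-1},\chi))=\il{n}(\Theta_n^{\#}(\g^{-1},\chi))$.

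The central step is to exchange Fitting ideals with the inverse limit, i.e.\ to prove
\[ \Fitt_{\Lambda[[G_\F]]}\bigl(T_p(\calf)(\chi)\bigr)=\il{n}\Fitt_{W[\Gal(F_n/F)][[G_\F]]}\bigl((W\otimes T_p(F_n))(\chi)\bigr). \]
The inclusion $\subseteq$ is the standard base-change behaviour of Fitting ideals applied to the coinvariant quotients $T_p(\calf)(\chi)\sri (W\otimes T_p(F_n))(\chi)$. For the opposite inclusion I would combine the surjectivity of the transition maps $T_p(F_{n+1})(\chi)\sri T_p(F_n)(\chi)$ (which follows from the fact that $\calf/F_n$ is ramified only at primes above $\pr$, plus the vanishing of the $\chi$-component of the relevant inertia contributions for $\chi\ne\chi_0$) with the principality of each finite-level Fitting ideal by a non-zero-divisor from Proposition \ref{PropIntro2}. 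In this situation the compatible system of free presentations assembled in Section \ref{SecFittInf} lifts to a presentation over $\Lambda[[G_\F]]$ whose Fitting ideal is exactly the principal ideal $(\Theta_\infty^{\#}(\g^{-1},\chi))$, and torsion plus finite generation of $T_p(\calf)(\chi)$ follow at once.

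The main obstacle I expect is the commutation of Fitting ideals with the inverse limit: this is false in general and has to be pushed through here using both principality of the generators and the explicit compatibility data prepared in Section \ref{SecFittInf}. A related concern, to be settled along the way, is that the limit element $\Theta_\infty^{\#}(\g^{-1},\chi)$ remains a non-zero-divisor in $\Lambda[[G_\F]]$; this amounts to controlling the $\pr$-adic valuations of the coefficients of the Euler factors of $\Theta_n(X)$ uniformly in $n$, which the explicit form of the Euler product makes tractable.
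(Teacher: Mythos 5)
Your plan follows the paper's own route closely: norm-compatibility of the $\Theta_n^\#$, a compatible system of free presentations modeled on the quotients $T_p(\calf)(\chi)\sri (W\otimes T_p(F_n))(\chi)$, and passage to the inverse limit. However, there is a genuine gap in the pivotal step, and a separate concern you raise is in fact a red herring.

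The gap: you invoke only the \emph{surjectivity} of the norm maps $N^{n+1}_n\colon (W\otimes T_p(F_{n+1}))(\chi)\sri (W\otimes T_p(F_n))(\chi)$ and the \emph{surjectivity} of the projections $\pi^{n+1}_n\colon W[\Gamma_{n+1}][[G_\F]]^r \sri W[\Gamma_n][[G_\F]]^r$, then assert that the compatible presentations ``lift''. But to make the snake lemma give surjectivity $b^{n+1}_n\colon B_{n+1}\sri B_n$ on the relation modules (which is exactly what is needed so that $B_\infty\sri B_n$ and hence that every finite-level relation is hit by an infinite-level one), one also needs the kernel of $N^{n+1}_n(\chi)$ to be \emph{precisely} the augmentation submodule $I_{\Gamma^{n+1}_n}(W\otimes T_p(F_{n+1}))(\chi)$ (matching $\ker\pi^{n+1}_n=(I_{\Gamma^{n+1}_n})^r$). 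Without this matching of kernels the snake lemma only gives a six-term sequence, not surjectivity of $b^{n+1}_n$, and the limit of the $B_n$ need not present $T_p(\calf)(\chi)$ faithfully. This kernel identification is the whole content of Proposition \ref{CohoTriv}: it is obtained from the $\Gamma^{n+1}_n$-cohomological triviality of $T_p(\calm_{\ov{S}_n})$ (the Greither--Popescu input, Theorem \ref{ThmFittStick}) together with the explicit structure of the $\chi$-parts of $D_n$. You would need to reproduce (or at least cite) exactly this computation; ``vanishing of inertia contributions'' alone explains surjectivity but not the kernel.

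The red herring: the concern that $\Theta_\infty^\#(\g^{-1},\chi)$ must be shown to be a non-zero-divisor in $\Lambda[[G_\F]]$ is not needed for this theorem. The paper establishes torsion simply because each $\Theta_n^\#$ annihilates $(W\otimes T_p(F_n))(\chi)$ (Proposition \ref{FittTFn}), hence the limit element annihilates $T_p(\calf)(\chi)$; Fitting ideals are defined for any finitely generated module and the limit computation never passes through a non-zero-divisor hypothesis. You need not control coefficients of Euler factors for this argument.
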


To compute the Fitting ideals of the class groups (i.e., of the $G_\F$-coinvariants of the Tate modules) one simply
specializes $\g^{-1}$ to 1. In Section \ref{SecIMC} (in particular in Lemma \ref{BigLemma}) we again check the compatibility relations
needed to be able to perform (inverse) limits after this specialization. Thus we prove an {\em Iwasawa Main
Conjecture} (IMC) for the $\pr$-cyclotomic extension $\calf/F$ as an
equality of ideals in the (non-noetherian) Iwasawa algebra $\Lambda$; namely (see Theorem \ref{IMClevel})

\begin{thm}[IMC]\label{IntroIMC}
For any $\chi\neq \chi_0$, $\calc(\calf)(\chi):=\plim{n} \calCl(F_n)\{p\} (\chi)$ is a finitely generated torsion
$\Lambda$-module, and
\[ \Fitt_{\Lambda}\left( \calc(\calf)(\chi) \right) = \il{n} (\Theta^\#_n(1,\chi)) = (\Theta^\#_\infty(1,\chi)) \ . \]
\end{thm}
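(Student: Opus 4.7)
The plan is to deduce Theorem \ref{IntroIMC} from Proposition \ref{PropIntro2} and Theorem \ref{IntroIMC1} by specializing the variable $\gamma^{-1}$ to $1$ and then passing carefully to the inverse limit along $n$. The link between the Tate modules and the class groups is provided by the identification of $\calCl(F_n)\{p\}$ with the $G_\F$-coinvariants of $T_p(F_n)$ (Lemma \ref{FromTpToCn}).

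At each finite level, this identification gives
\[ (W\otimes\calCl(F_n)\{p\})(\chi) \;\cong\; (W\otimes T_p(F_n))(\chi)\big/(\gamma-1)(W\otimes T_p(F_n))(\chi), \]
where $\gamma$ is the arithmetic Frobenius. Since Fitting ideals are compatible with arbitrary base change, reducing the equality of Proposition \ref{PropIntro2} modulo $(\gamma-1)$ (equivalently, setting $\gamma^{-1}=1$ in the generator) yields
\[ \Fitt_{W[\Gal(F_n/F)]}\!\bigl((W\otimes\calCl(F_n)\{p\})(\chi)\bigr) \;=\; \bigl(\Theta_n^\#(1,\chi)\bigr). \]

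Next, the plan is to take the projective limit along the natural transition maps $W[\Gal(F_{n+1}/F)] \sri W[\Gal(F_n/F)]$. The key technical input, supplied by Lemma \ref{BigLemma}, is that the generators $\Theta_n^\#(1,\chi)$ form a norm-compatible system, so that they assemble into a single element $\Theta_\infty^\#(1,\chi) \in \Lambda$, and the inverse limit of the ideals $(\Theta_n^\#(1,\chi))$ coincides with $(\Theta_\infty^\#(1,\chi))$. Finite generation and the torsion property of $\calc(\calf)(\chi)$ over $\Lambda$ then follow from the corresponding statements in Theorem \ref{IntroIMC1}, by transporting them through the coinvariants functor; here the transition maps on the class groups are surjective, so a Mittag-Leffler argument allows one to interchange $\plim{n}$ and the specialization $\gamma^{-1}\mapsto 1$.

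The hard part is precisely the commutation of Fitting ideals with the inverse limit $\plim{n}$. Because the Iwasawa algebra $\Lambda$ is non-noetherian (the Galois group $\Gal(\calf/F_0)$ contains a $\Z_p^\infty$), no general formal result is available, and one must verify by hand that the finite-level Fitting ideals assemble into the Fitting ideal of the limit module. In particular, it is not automatic that an element of $\Lambda$ generating $\plim{n}(\Theta_n^\#(1,\chi))$ also generates $\Fitt_\Lambda(\calc(\calf)(\chi))$; establishing this equality requires checking at each step that a coherent system of generators survives under the (surjective) transition maps. This is exactly the role of the compatibility relations assembled in Section \ref{SecIMC}, and I expect the genuine difficulty of the proof to be concentrated there.
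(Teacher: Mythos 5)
Your overall strategy --- specialize $\gamma^{-1}\mapsto 1$ at each finite level using the identification $C_n\simeq T_p(F_n)_{G_\F}$ and compatibility of Fitting ideals with base change, then pass to the inverse limit --- is indeed the paper's strategy, and you have correctly located the genuine difficulty in the passage of Fitting ideals across $\plim{n}$ over the non-noetherian $\Lambda$. However, two points in the middle of your sketch are off. First, Lemma \ref{BigLemma} does \emph{not} supply ``norm-compatibility of the $\Theta_n^\#(1,\chi)$'': that compatibility is automatic, since all the $\Theta_n$ are projections of the single element $\Theta_{\calf_S/F,S}$. What Lemma \ref{BigLemma} actually supplies is control over the inverse system of class groups themselves: part (3) gives surjectivity of the norm maps $N^{n+1}_n$, and part (4) identifies their kernel on the $\chi$-part as exactly $I_{\G^{n+1}_n}(W\otimes C_{n+1})(\chi)$. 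This is the input that makes the snake-lemma argument work on a diagram of finite presentations, forcing the relation modules $B_n$ to map onto each other; only then does Mittag-Leffler give a presentation of $\calc(\chi)$ over $\Lambda$, and only then can one assert that generators of $\Fitt_\Lambda(\calc(\chi))$ are limits of determinants of matrices of relations at finite level --- which is the precise content of ``Fitting commutes with $\plim{n}$'' in this setting. Second, your proposed derivation of finite generation by ``transporting the statement of Theorem \ref{IntroIMC1} through the coinvariants functor'' implicitly requires interchanging $\plim{n}$ with $G_\F$-coinvariants, i.e.\ the very commutation you flag as the hard step, so as written it is circular. The paper instead proves finite generation directly (Theorem \ref{Cfinitgen}): using Lemma \ref{BigLemma} part (4) it establishes the exact control $\calc(\chi)/\gotI_n\calc(\chi)\simeq(W\otimes C_n)(\chi)$, after which a generalized Nakayama lemma for compact $\Lambda$-modules gives finite generation, and the annihilation of each $(W\otimes C_n)(\chi)$ by $\Theta_n^\#(1,\chi)$ gives torsion. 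You should therefore supply the kernel/cokernel control on the class-group norm system explicitly; once that is in place, the rest of your plan goes through exactly as in the paper.
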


In Section \ref{SecStick} we show the interpolation properties of the Stickelberger series $\Theta_{\calf_S/F,S}(X)$
(where $\calf_S$ is the maximal abelian extension of $F$ unramified outside a prescribed finite set of places $S$). This
series turns out to be a kind of universal object for $L$-functions attached to abelian characters unramified outside $S$.
If $\mathbb{L}$ is any of the fields $\C$, $\C_\infty$ or $\C_\pr$ and $\psi$ is a continuous character
from $G_S:=\Gal(\calf_S/F)$ to $\mathbb{L}^*$, then we provide relations between $\psi(\Theta_{\calf_S/F,S})$ and
the complex $L$-function $L_S(s,\psi)$ (for $\mathbb{L}=\C$),
the Carlitz-Goss $\zeta$-function $\zeta_A$ (for $\mathbb{L}=\C_\infty$) and
the $\pr$-adic $L$-function $L_\pr(X,\psi)$ (for $\mathbb{L}=\C_\pr$). Details and more precise formulations will be
given after introducing the proper notations,
in equation \eqref{eqls}, Theorem \ref{thmtetzet} and Theorem \ref{thmtetpr} respectively.

Exploiting these relations and the fact that the Stickelberger series $\Theta^\#_\infty(X,\chi)\in\Lambda[[X]]$ is (more or less)
the projection of $e_\chi\Theta_{\calf_S/F,S}(X)$, we provide the final link between $\Theta^\#_\infty(X,\chi)$ and the $\pr$-adic
$L$-function $L_\pr(X,y,\chi)$ using a theorem of Sinnott (see \cite[Theorem 1]{Si})
which, in our interpretation, provides an injection $s\colon\Lambda/p\Lambda \iri Dir(\Z_p,A_\pr)$ (a certain ring of
continuous functions $\Z_p\rightarrow A_\pr$, defined in Section \ref{SecSinn} below). Extending
the map $s$ in a natural way to $s_X\colon\Lambda[[X]]\rightarrow Dir(\Z_p,A_\pr)[[X]]$, we obtain (see Theorem \ref{StickLfun})

\begin{thm}\label{ThmIntro1}
Let $0\leq i\leq q^d-2$, for any $y\in\Z_p$, we have
\[ s_X(\Theta_{\infty}(X,\wt{\omega}_\pr^{-i}))(y)=L_{\pr}(X,-y,\omega_\pr^i) \ .\]
\end{thm}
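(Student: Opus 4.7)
The plan is to identify both sides as elements of $Dir(\Z_p, A_\pr)[[X]]$ and match them by pulling back to a common universal object, namely the evaluation of the full Stickelberger series $\Theta_{\calf_S/F,S}(X)$ at a suitable continuous character. First I would unwind what $s_X(\Theta_\infty(X,\wt{\omega}_\pr^{-i}))(y)$ means: by definition of $s_X$, applied coefficient by coefficient, this is obtained by reducing $\Theta_\infty(X,\wt{\omega}_\pr^{-i})$ modulo $p$ inside $\Lambda[[X]]$ and then evaluating the Sinnott function at $y\in\Z_p$. By Sinnott's theorem, the map $s$ is built so that evaluation at $y$ corresponds to applying a distinguished continuous character $\psi_y\colon \Gal(\calf/F_0)\to A_\pr^*$ (a character of the $\Z_p^\infty$-part, obtained from topological generators and the parameter $y$), followed by reduction mod $p$.

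Next I would combine the $\Delta$-character $\wt{\omega}_\pr^{-i}$ with $\psi_y$ to form the single continuous character $\psi = \wt{\omega}_\pr^{-i}\cdot\psi_y$ of $G_S$. Because $\Theta_\infty(X,\wt{\omega}_\pr^{-i})$ is (up to the Euler factor adjustments already absorbed in the definition) the $\wt{\omega}_\pr^{-i}$-projection of $\Theta_{\calf_S/F,S}(X)$, the Sinnott evaluation $s_X(\Theta_\infty(X,\wt{\omega}_\pr^{-i}))(y)$ should equal $\psi(\Theta_{\calf_S/F,S}(X))$ modulo $p$. For the right-hand side, Theorem \ref{thmtetpr} (the $\pr$-adic interpolation statement from Section \ref{SecStick}) identifies $\psi(\Theta_{\calf_S/F,S}(X))$ with the $\pr$-adic $L$-function $L_\pr(X,\psi)$; unpacking this in terms of $y$ via $\psi_y$, together with the reduction $\wt{\omega}_\pr\equiv\omega_\pr\pmod{\pr}$ (so that $\wt{\omega}_\pr^{-i}$ becomes $\omega_\pr^{-i}=\omega_\pr^{q^d-1-i}$ in characteristic $p$, corrected by the convention built into the definition of $L_\pr(X,y,\omega_\pr^i)$), yields the right-hand side $L_\pr(X,-y,\omega_\pr^i)$. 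The sign $-y$ appears naturally from the convention for $\psi_y$ used in Sinnott's construction: $\psi_y$ acts on the Frobenius direction opposite to the one parametrizing $L_\pr$ in its second variable.

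To carry this out rigorously, I would first verify the claim for a dense subset of $y\in\Z_p$ (say $y\in\N$, where $\psi_y$ reduces to an algebraic character and Theorem \ref{thmtetpr} combined with (\ref{EqIntro1}) gives an explicit identification with $Z(X,i)$-type polynomials), and then extend by continuity, using that both $s_X(\Theta_\infty(X,\wt{\omega}_\pr^{-i}))$ and $y\mapsto L_\pr(X,-y,\omega_\pr^i)$ are continuous $\Z_p\to A_\pr[[X]]$.

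The main obstacle is the bookkeeping of conventions. Several normalizations must be tracked simultaneously: the sign in $\wt{\omega}_\pr^{-i}$ versus $\omega_\pr^i$; the passage from characteristic zero characters $\wt{\omega}_\pr$ to their characteristic $p$ reductions $\omega_\pr$ (which is why Sinnott's map takes values in $\Lambda/p\Lambda$ rather than $\Lambda$); the sign $-y$ coming from how Sinnott parametrizes characters of $\Z_p^\infty$ versus how $L_\pr$ is indexed; and the Euler factor at $\pr$ implicit in passing from $\Theta_{\calf_S/F,S}$ to $\Theta_\infty^\#$. Getting each of these correct, and confirming that Theorem \ref{thmtetpr} applies to the non-algebraic characters $\psi_y$ (not just to locally algebraic ones), is where the real work lies; once the compatibilities are in place the equality follows formally from the universal property of $\Theta_{\calf_S/F,S}(X)$.
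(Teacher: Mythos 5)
Your proposal follows essentially the same route as the paper: both sides are identified by factoring the character $\xi_{(y,-i)}$ through the reduction $\alpha_{-i}$ (giving $\Theta_\infty(X,\wt\omega_\pr^{-i})\bmod p$) composed with Sinnott evaluation $\kappa^y$, and then invoking Theorem \ref{thmtetpr} to recognize $\xi_{(y,-i)}(\Theta_{\calf_S/F,S})(X)$ as $L_\pr(X,-y,\omega_\pr^i)$. Two small remarks: the density-plus-continuity step you propose is not needed, since Theorem \ref{thmtetpr} is already stated for all $y\in\Z_p$; and there is no ``correction'' of $\omega_\pr^{-i}$ to be made --- the exponent $i$ in $L_\pr(X,-y,\omega_\pr^i)$ comes directly from the $\xi_{-s}$ convention in Theorem \ref{thmtetpr} (set $s=(-y,i)$), exactly as in the paper's one-line computation.
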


Theorems \ref{IntroIMC} and \ref{ThmIntro1} are the key ingredients for the arithmetical applications we had in mind.
Let $\pi_\pr\in A_+$ denote a generator of $\pr$. In the final section we show that the Bernoulli-Goss numbers $\beta(j)$ verify:
\begin{itemize}
\item if $j\equiv i\pmod{q^d-1}$ and $j\not\equiv 0\pmod{q-1}$ (i.e., $\chi=\wt\omega_\pr^i$ is odd), then
\begin{equation}\label{EqIntro4}
L_{\pr}(1,j,\omega_\pr^i)=(1-\pi_\pr^j)Z(1,j)=(1-\pi_\pr^j)\beta(j) \ ;\end{equation}
\item if $j\geq 1$, $j\equiv i\pmod{q^d-1}$ and $j\equiv 0\pmod{q-1}$ (i.e., $\chi=\wt\omega_\pr^i$ is even), then
\begin{equation}\label{EqIntro5}
\frac{d}{dX}L_{\pr}(X,j,\omega_\pr^i)|_{X=1}=-(1-\pi_\pr^j)\frac{d}{dX}Z(X,j)|_{X=1}
= -(1-\pi_\pr^j)\beta(j) \ .\end{equation}
\end{itemize}

Using the fact that the $\beta(j)$ are nonzero (see Lemma \ref{BetaCongrTheta}) and the injectivity of the map $s$,
we get an analog of the Ferrero-Washington Theorem on the vanishing of the Iwasawa $\mu$-invariant for the
cyclotomic $\Z_p$-extension of an abelian number field (Theorem \ref{Sticnon0modp}).

\begin{thm}\label{IntroThmmu=0}
For any $1\leq i\leq q^d-2$, one has $\Theta_\infty^\#(1,\wt{\omega}_\pr^i)\not\equiv 0\pmod{p}$.
\end{thm}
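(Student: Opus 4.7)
The plan is to contradict the injectivity of the Sinnott-type embedding $s\colon\Lambda/p\Lambda\hookrightarrow\mathrm{Dir}(\Z_p,A_\pr)$ provided just before Theorem \ref{ThmIntro1}: it suffices to exhibit a single $y\in\Z_p$ at which the function $s(\Theta_\infty^\#(1,\wt\omega_\pr^i))\in\mathrm{Dir}(\Z_p,A_\pr)$ takes a nonzero value, for then this image is a nonzero function and injectivity forces $\Theta_\infty^\#(1,\wt\omega_\pr^i)\not\equiv 0\pmod{p}$ in $\Lambda$.

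To make this image explicit I would apply Theorem \ref{ThmIntro1} with its running index replaced by $-i\bmod(q^d-1)$, obtaining $s_X(\Theta_\infty(X,\wt\omega_\pr^i))(y)=L_\pr(X,-y,\omega_\pr^{-i})$. For $\chi=\wt\omega_\pr^i$ odd, the specialization $X\mapsto 1$ recovers $\Theta_\infty^\#(1,\chi)$ directly, so $s(\Theta_\infty^\#(1,\chi))(y)=L_\pr(1,-y,\omega_\pr^{-i})$. For $\chi$ even and nontrivial, $\Theta_\infty(X,\chi)$ must have a trivial zero at $X=1$; writing $\Theta_\infty(X,\chi)=(1-X)h(X)$ in $\Lambda[[X]]$, one finds $\Theta_\infty^\#(1,\chi)=h(1)=-\tfrac{d}{dX}\Theta_\infty(X,\chi)\bigl|_{X=1}$, and since $s_X$ is $W$-linear and commutes with differentiation in $X$,
\[s(\Theta_\infty^\#(1,\chi))(y)=-\frac{d}{dX}L_\pr(X,-y,\omega_\pr^{-i})\Big|_{X=1}.\]

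Next I would pick $j=q^d-1-i$ and set $y=-j$. Then $1\leq j\leq q^d-2$, $j\equiv -i\pmod{q^d-1}$, and $\wt\omega_\pr^j$ has the same parity as $\wt\omega_\pr^i$. Applying \eqref{EqIntro4} (if $\chi$ is odd) or \eqref{EqIntro5} (if $\chi$ is even) to $\omega_\pr^{-i}$ in each of the two formulas displayed above gives
\[s(\Theta_\infty^\#(1,\wt\omega_\pr^i))(-j)=\pm(1-\pi_\pr^j)\beta(j),\]
which is nonzero in $A_\pr$: indeed $\beta(j)\neq 0$ by Lemma \ref{BetaCongrTheta}, $1-\pi_\pr^j\neq 0$, and $A_\pr$ is an integral domain. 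By the injectivity of $s$ this concludes the argument.

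The most delicate part will be the even character case: one must verify that dividing $\Theta_\infty(X,\chi)$ by $(1-X)$ produces an honest element of $\Lambda[[X]]$ (equivalently, that the division by $(1-\g^{-1})$ in the definition of $\Theta^\#$ matches, after specialization, the derivative formula above), and that $s_X$ commutes with both $d/dX$ and the specialization $X\mapsto 1$. These compatibilities should follow by unwinding the constructions of $\Theta^\#$ and of $s_X$; once they are in place, the proof reduces to the previously established congruences \eqref{EqIntro4}--\eqref{EqIntro5} and to the nonvanishing of the Bernoulli--Goss numbers $\beta(j)$.
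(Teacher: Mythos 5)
Your proof is correct and follows essentially the same route as the paper's own argument: invoke Theorem~\ref{StickLfun} to express $s$ applied to the Stickelberger element in terms of the $\pr$-adic $L$-function (or its derivative, in the even case), evaluate at a particular integer $j\equiv -i\pmod{q^d-1}$ where the $L$-value equals $\pm(1-\pi_\pr^j)\beta(j)$, invoke Lemma~\ref{BetaCongrTheta} to get nonvanishing, and finally use the injectivity of $s$ from Theorem~\ref{teosmap}. Incidentally your sign $\Theta_\infty^\#(1,\chi)=-\tfrac{d}{dX}\Theta_\infty(X,\chi)|_{X=1}$ in the even case is the right one (the paper's displayed formula drops the minus sign), though for nonvanishing it is immaterial; and the ``delicate'' compatibilities you flag --- $s_X$ commuting with $d/dX$ and with specialization at $X=1$ --- are indeed routine, since $s_X$ acts coefficientwise, and the paper uses them tacitly in the same way.
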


Therefore the index
\[ N_\pr(i):=\Inf\{n\geq 0\,:\ \Theta_n^\#(1,\wt{\omega}_\pr^i)\not\equiv 0\pmod{p}\,\} \]
is well defined for any $1\leq i\leq q^d-2$. Using again the map $s_X$, we show that $N_\pr(i)$ provides a lower bound
for the $\pr$-adic valuations of the Bernoulli-Goss numbers $v_\pr(\beta(j))$ for $j\geq 1$, $j\equiv -i\pmod{q-1}$
(Corollary \ref{ArPropBGNum}): a relation that, to our knowledge, seems to have no counterpart in the number field setting.

In the appendix we generalize some results of \cite[Sections 2 and 3]{GP2} to the case in which
$X/Y$ is a finite geometric cover of curves over $\F$ of Galois group $G$, and we assume that $X/Y$
has a totally ramified place. In this setting we compute $Fitt_{\Z_p[G][[G_{\F}]]}(T_p(Jac(X)(\ov{\F}))^*)$ and
$Fitt_{\Z_p[G]}(\calCl(X)\{p\}^{\vee})$ where $^*$ (resp. $^\vee$) denotes the $\Z_p$-dual (resp. the Pontrjagin dual).

\subsection*{Aknowledgments} All the authors thank the MTM 2009-10359, which funded a workshop on Iwasawa theory for
function fields in 2010 and supported the authors during their stay in Barcelona in the summer of 2013, and the
CRM (Centre de Recerca Matem\`atica, Bellaterra, Barcelona) for providing a nice environment to work on this project.
The fourth author thanks NCTS/TPE for support to travel to Barcelona in summer 2013.

\section{Basic facts on the $\pr$-cyclotomic extension}\label{Not}
We recall here some basic facts (and notations) about what we call the $\pr$-cyclotomic extension of the rational function field,
including the corresponding Iwasawa algebra and the Iwasawa modules which will be relevant for our work.

\subsection{The $\pr$-cyclotomic extension}\label{SecCycExt}
Let $\F$, $F$, $A$ and the place $\infty$ be as in the introduction. Let $\Phi$ be the Carlitz module associated with $A$:
it is an $\F$-linear ring homomorphism
\[ \Phi:A\rightarrow F\{\tau\} \;\;, \;\; \theta \mapsto \Phi_{\theta}=\theta\tau^0+\tau \ ,\]
where $F\{\tau\}$ is the skew polynomial ring with $\tau f=f^q\tau$ for any $f\in F$.

We fix once and for all an algebraic closure of $F$, which shall be denoted by $\ov{F}$. For any ideal $\gota$ of $A$ write
\[ \Phi[\gota]:=\{x\in\ov{F}\, :\,\Phi_a(x)=0\ \forall\,a\in\gota \} \]
for the $\gota$-torsion of $\Phi$. It is an $A$-module isomorphic to $A/\gota$; in particular,
if $\gota|\gotb$ as ideals of $A$, we have $\Phi[\gotb]\supset\Phi[\gota]$.

Fix a prime ideal $\pr\subset A$ of degree $d>0$ and, for any $n\in\mathbb{N}$, let
\[ F_n:=F(\Phi[\pr^{n+1}]) \]
be the field generated by the $\pr^{n+1}$-torsion of the Carlitz module. It is well known (see \cite[Chapter 12]{Ro}
or \cite[\S 7.5]{Goss}) that $F_n/F$ is an abelian extension with Galois group
\[ G_n:= \Gal(F_n/F)=\Delta\times\G_n \simeq (A/\pr^{n+1})^* \simeq (A/\pr)^*\times (1+\pr)/(1+\pr^{n+1}) \ ,\]
where $\Delta\simeq\Gal(F_0/F)\simeq (A/\pr)^*$ is a cyclic group of order $q^d-1$, and
$\G_n=\Gal(F_n/F_0)$ is the $p$-Sylow subgroup of $G_n$. (By a slight abuse of notation, we identify the prime-to-$p$
part of $G_n$ for all $n$, and denote it always as $\Delta$.) The extension $F_n/F$ is totally ramified at
$\pr$ and tamely ramified at the place $\infty$, whose inertia group is cyclic of order $q-1$; in particular, $F_n/F_0$ is
only ramified at $\pr$. In the isomorphism $\Gal(F_0/F)\simeq (A/\pr)^*$, the inertia at $\infty$ corresponds to $\F^*$.

\begin{defin}\label{DefCycExt}
We define the {\em $\pr$-cyclotomic extension of $F$} as the field
\[ \calf:= F(\Phi[\pr^\infty]) = \bigcup_n F(\Phi[\pr^n]) \]
with abelian Galois group
\[G_\infty:= \Gal(\calf/F)= \il{n} \Gal(F_n/F) = \Delta\times \il{n} \G_n=: \Delta\times \G \ . \]
\end{defin}

For any place $v$ of $F$ we denote by $I_{v,n}$ (resp. $\cali_v$) its inertia group in  $G_n$ (resp. in $G_\infty$).
The set of ramified places in $\calf/F$ is $S:=\{\pr,\infty\}$ and, for any $n$,  one has
\[ I_{\pr,n}=G_n\ \ ,\ \ \cali_\pr=G_\infty\ \ {\rm and}\ \ I_{\infty,n}=\cali_\infty \iri \Delta \ .\]

Denote by $A_\pr$ the completion of $A$ at $\pr$, $F_\pr$ the completion of $F$ at $\pr$ and $\F_\pr$ the residue field
of $A_\pr$. Readers who prefer a more ``hands-on'' approach might appreciate the equality $A_\pr=\F_\pr[[\pi_\pr]]$,
where $\pi_\pr$ is the monic irreducible generator of $\pr$ in $A$.
The group of units of the local ring $A_\pr$ has a natural filtration; we put $U_n:=1+\pr^nA_\pr$.
Let $\mathbb{C}_{\pr}$ be the completion of an algebraic closure of $F_\pr$; we also fix once and for all an
embedding $\overline{F} \iri \mathbb{C}_\pr$.

We have isomorphisms $G_\infty\simeq A_\pr^*$ and $\G\simeq U_1$, which are induced by the {\em $\pr$-cyclotomic character}
$\kappa$. To define $\kappa$ we extend $\Phi$ to a formal Drinfeld module which we denote by the same symbol
$\Phi\colon A_\pr\rightarrow A_\pr\{\{\tau\}\}$ (see \cite{rosen}). Then for any $\sigma\in G_\infty$ and any
$\varepsilon\in\Phi[\pr^\infty]$ we have
\begin{equation} \label{eqkappa}
\sigma(\varepsilon)=\Phi_{\kappa(\sigma)}(\varepsilon) \ .\end{equation}
This action provides the isomorphisms mentioned above. In particular, $\G_n$ corresponds to
$U_1/U_{n+1}\simeq (1+\pr A_\pr)/(1+\pr^{n+1}A_\pr)$. It is well known that $U_1\simeq\Z_p^\infty$
(a product of countably many copies of $\Z_p$).

As mentioned earlier, we define $\F_\pr$ to be the residue field $A_\pr/\pr A_\pr$; it is the same as the residue field $A/\pr$.
Since we are in positive characteristic, $\F_\pr$ can be canonically identified with a subring of $A_\pr$ (by lifting $x\neq0$ to
$\tilde x$, the unique root of 1 whose reduction mod $\pr$ is $x$) and in the rest of the paper we shall generally think of it as such.

\subsection{The Iwasawa algebra}\label{SecIwaAlg} Let $W$ be the Witt ring of $\F_\pr$, which is isomorphic to $\Z_p[\bmu_{q^d-1}]$
(where $\bmu_{q^d-1}$ denotes the $(q^d-1)$-th roots of unity). By definition of Witt ring, we have an identification
$W/pW=\F_\pr$. Moreover, the projection $W\twoheadrightarrow\F_\pr$ has a partial inverse $\F_\pr^*\rightarrow\bmu_{q^d-1}$,
the {\em Teichm\"uller character} (again by lifting $x$ to $\hat x$, the unique root of 1 whose reduction mod $p$ is $x$).

The Iwasawa algebra we shall be working with is the completed group ring
\[ \Lambda:=W[[\G]]=\varprojlim W[\G_n] \ . \]
For any $n\geq 0$, put $\Gamma^{(n)}:=\Gal(\calf/F_n)$. The exact sequence $\G^{(n)}\hookrightarrow\G\twoheadrightarrow\G_n$ induces
\[ \gotI_n\hookrightarrow \Lambda\twoheadrightarrow W[\G_n]\ .\]
We also put $\gotM_n:=p^n\Lambda+\gotI_{n-1}$ for any $n\geq 1$.

We recall some other basic facts on this non-noetherian algebra:
\begin{itemize}
\item $\left\{\gotM_n\right\}_{n\geq 0}$ is a basis of neighbourhoods of zero in $\Lambda$;
\item $\Lambda/p\Lambda=\F_\pr[[\G]]$;
\item $\Lambda$ is a compact $W$-algebra and a complete local ring with maximal ideal $\gotM_1$, so that
\[ \Lambda/\gotM_1\simeq W/pW\simeq\F_{\pr}\ .\]
\end{itemize}

\subsection{Consequences of a theorem of Sinnott}\label{SecSinn}
Let $C^0(\Z_p,A_\pr)$ be the space of continuous functions from $\Z_p$ to $A_\pr$, endowed with the topology of uniform convergence.
More generally, we can consider $C^0(\Z_p,M)$ where $M$ is any finitely generated $A_\pr$-module: it turns out that $C^0(\Z_p,A_\pr)$
is the projective limit of $C^0(\Z_p,A/\pr^n)$ as $n$ varies.

Following \cite{Si}, we define the $A_\pr$-module of Dirichlet series $Dir(\Z_p,A_\pr)$ as the closure in $C^0(\Z_p,A_\pr)$
of the module generated by the functions
$\vartheta_u:\Z_p\rightarrow A_\pr$, $y \mapsto u^y$, for all $u\in U_1$.
(If $F_v$ is the completion of $F$ at a place $v$, the element $u\in F_v$ satisfies $|1-u|<1$ and $y\in\Z_p$, we put
\[ u^y:=\sum_{n\geq 0} {\binom{y}{n}}(u-1)^n\in F_v^* \ , \]
where ${\binom{y}{n}}$ is the reduction modulo $p$ of the value of the usual binomial.)

The next theorem follows from Sinnott's results and ideas in \cite{Si} applied to our setting.

\begin{thm}\label{teosmap} There is an injective morphism
\[ s\colon\Lambda/p\Lambda\hookrightarrow Dir(\Z_p,A_\pr) \]
such that for any $\g\in\G$, one has $s(\g)=\vartheta_{\kappa(\g)}$.
\end{thm}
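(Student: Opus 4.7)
The plan is to construct $s$ by $W$-linear extension from the formula $\g\mapsto\vartheta_{\kappa(\g)}$ on group elements, to extend continuously to the completed group ring, and then to establish injectivity via Sinnott's theorem.

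First, I would define $\tilde s\colon W[\G]\to C^0(\Z_p,A_\pr)$ by $\tilde s\bigl(\sum_\g w_\g\g\bigr):=\sum_\g\bar w_\g\vartheta_{\kappa(\g)}$, where $\bar w_\g\in\F_\pr\subset A_\pr$ is the reduction of $w_\g$ modulo $p$. Since $A_\pr$ has characteristic $p$, the map automatically kills $pW[\G]$ and factors as $W[\G]\sri\F_\pr[\G]\to C^0(\Z_p,A_\pr)$; its image lies in $Dir(\Z_p,A_\pr)$ by construction. For continuity I would use the identification $\kappa(\G^{(n)})=U_{n+1}$: if $\g\in\G^{(n)}$ then the binomial expansion $\kappa(\g)^y=\sum_{k\geq 0}\binom{y}{k}(\kappa(\g)-1)^k$ shows that $\vartheta_{\kappa(\g)}-1$ takes values in $\pr^{n+1}A_\pr$ uniformly in $y\in\Z_p$. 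A short computation with coset representatives then shows that the image under $\tilde s$ of the ideal $\ker(\F_\pr[\G]\to\F_\pr[\G_n])$ consists of functions with values in $\pr^{n+1}A_\pr$, which is exactly the continuity required to extend $\tilde s$ uniquely to $s\colon\F_\pr[[\G]]=\Lambda/p\Lambda\to C^0(\Z_p,A_\pr)$. Since $Dir(\Z_p,A_\pr)$ is closed, the image of $s$ remains in $Dir(\Z_p,A_\pr)$.

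The main content of the theorem is injectivity, for which I would invoke (a suitable adaptation of) \cite[Thm.~1]{Si}. Suppose $\mu\in\F_\pr[[\G]]$ satisfies $s(\mu)(y)=\int_\G\kappa(\g)^y\,d\mu(\g)=0$ for all $y\in\Z_p$. Expanding binomially and using the $\Z_p$-linear independence of the polynomials $\binom{y}{k}$ as functions of $y$, one deduces that the moments $\int_\G(\kappa(\g)-1)^k\,d\mu(\g)$ all vanish, and the real task is to conclude that $\mu=0$. This is the main obstacle: Sinnott's original statement concerns measures on $\Z_p$-modules of finite rank, whereas $\G\cong U_1\cong\Z_p^{\N}$ has infinite rank. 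The natural strategy is to project $\mu$ to each finite-layer quotient $\F_\pr[\G_n]$, apply Sinnott's moment-separation statement there (where one is really working with a polynomial ring in finitely many variables over $\F_\pr$), and then pass to the inverse limit using the continuity established above. The delicate point is that in characteristic $p$ there are no non-trivial continuous characters of the pro-$p$ group $\G$ into $\bar\F_p^*$, so the separation really requires Sinnott's moment-based mechanism rather than a standard duality argument.
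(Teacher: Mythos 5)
Your construction of $s$ by $W$-linear and continuous extension is fine and matches the paper's setup, but the injectivity argument contains a genuine gap, and in a place where the paper itself provides a counterexample. Proposition~\ref{snNotInj} shows that the finite-level maps $s_n\colon\F_\pr[\G_n]\to C^0(\Z_p,A/\pr^{n+1})$ are \emph{not} injective for $n>0$: one chooses three elements $\g_1,\g_2,\g_3\in\G$ with $\kappa(\g_i)=1+a_i\pi_\pr^n$ and $a_i$ pairwise distinct modulo $\pr$, and the condition $\sum_i x_i\kappa(\g_i)^y\equiv 0 \pmod{\pr^{n+1}}$ for all $y$ reduces to a linear system with only two equations in three unknowns over $\F_\pr$, hence has a nontrivial solution lying in $\ker s_n$. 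So the fallback strategy in your last paragraph (project $\mu$ to each $\F_\pr[\G_n]$, apply a moment-separation argument there, and pass to the inverse limit) cannot work as written: from $s(\mu)=0$ you only get $s_n(\mu_n)=0$ for all $n$, which does not imply $\mu_n=0$.

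The underlying issue is a misidentification of which Sinnott result is being cited. The reference \cite{Si} is Sinnott's 2008 note \emph{Dirichlet series in function fields}, not his earlier work on $\mu$-invariants of $p$-adic $L$-functions for abelian number fields. His Theorem~1 is formulated precisely in this function-field $\pr$-adic setting and establishes an \emph{isomorphism} $A_\pr[[U_1]]\to Dir(\Z_p,A_\pr)$, with $U_1\simeq\Z_p^\infty$ of infinite rank; so the obstacle you flag (finite versus infinite rank) does not arise and no ``suitable adaptation'' is needed. The paper's proof is accordingly very short: $\kappa$ induces $A_\pr[[\G]]\simeq A_\pr[[U_1]]$, Sinnott's isomorphism carries the Dirac measure at $u$ to $\vartheta_u$, and $s$ is the restriction of the composite to the subring $\F_\pr[[\G]]=\Lambda/p\Lambda\subset A_\pr[[\G]]$, so injectivity is inherited automatically rather than being the ``main content'' requiring a separate argument.
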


\begin{proof} For any ring $R$, the algebra of $R$-valued distributions on $\G$ can be identified with $R[[\G]]$.
In \cite[Theorem 1]{Si} Sinnott constructs an isomorphism $A_\pr[[U_1]]\rightarrow Dir(\Z_p,A_\pr)$ by attaching to a measure $\mu$
the function $y\mapsto\int_{U_1} u^y d\mu(u)$. In particular, $\vartheta_u$ corresponds to the Dirac delta at $u$.
To complete our proof, one just has to recall that $\Lambda/p\Lambda=\F_\pr[[\G]]$ is a subring of $A_\pr[[\Gamma]]$ and compose
Sinnott's isomorphism with the one $A_\pr[[\G]]\simeq A_\pr[[U_1]]$ induced by the cyclotomic character $\kappa$.
\end{proof}

It is clear from the proof that the image of $s$ is exactly the closure of the $\F_\pr$-module generated by
the functions $\vartheta_u$.

\begin{prop} The morphism $s$ induces a ring homomorphism
\[ s_n\colon\F_\pr[\G_n]\rightarrow C^0(\Z_p,A/\pr^{n+1})\ .\]
\end{prop}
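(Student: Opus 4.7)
The plan is to factor the composite map $\tilde s_n : \Lambda/p\Lambda \xrightarrow{s} C^0(\Z_p,A_\pr) \xrightarrow{\mathrm{red}} C^0(\Z_p,A/\pr^{n+1})$ through the natural surjection $\Lambda/p\Lambda \twoheadrightarrow \F_\pr[\G_n]$, where $\mathrm{red}$ is the continuous ring homomorphism induced by $A_\pr \twoheadrightarrow A_\pr/\pr^{n+1}A_\pr \simeq A/\pr^{n+1}$. Since $\F_\pr[\G_n] = \Lambda/(p\Lambda + \gotI_n)$, the factorization amounts to checking that $\tilde s_n$ annihilates the image of $\gotI_n$. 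The map $s$ is itself a ring homomorphism---either by Sinnott's theorem (the isomorphism $A_\pr[[U_1]] \simeq Dir(\Z_p,A_\pr)$ intertwines convolution on the measure side with pointwise product on the function side) or directly from the identity $\vartheta_u \vartheta_v = \vartheta_{uv}$ combined with $\F_\pr$-linearity and continuity---and $\mathrm{red}$ is visibly a ring homomorphism, so the induced $s_n$ will automatically be one.

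The key computation is the following. By continuity and $\Lambda$-linearity of $\tilde s_n$, and since $\gotI_n$ is the closed ideal of $\Lambda$ topologically generated by $\{\g-1 : \g \in \G^{(n)}\}$, it suffices to check that $\tilde s_n(\g-1) = 0$ for every $\g \in \G^{(n)}$. Under the cyclotomic character, $\G^{(n)} = \Gal(\calf/F_n)$ corresponds precisely to $U_{n+1} = 1 + \pr^{n+1}A_\pr$. Writing $\kappa(\g) = 1+a$ with $a \in \pr^{n+1}A_\pr$, the binomial expansion
$$\vartheta_{\kappa(\g)}(y) = (1+a)^y = 1 + \sum_{k \geq 1} \binom{y}{k} a^k$$
gives $\vartheta_{\kappa(\g)}(y) - 1 \in \pr^{n+1}A_\pr$ for every $y \in \Z_p$, since $a^k \in \pr^{(n+1)k}A_\pr \subseteq \pr^{n+1}A_\pr$ for $k \geq 1$. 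Therefore $s(\g-1) = \vartheta_{\kappa(\g)} - 1$ lies in the kernel of $\mathrm{red}$, yielding the desired factorization.

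The only conceptual content is the matching of two filtrations: on the algebra side, $\{\gotI_n\}$ comes from the tower $\{F_n\}$, while on the analytic side, $\{\pr^{n+1}A_\pr\}$ comes from the local topology of $A_\pr$; the cyclotomic character $\kappa$ identifies the tower $\{\G^{(n)}\}$ with $\{U_{n+1}\}$, and the binomial expansion makes this compatibility quantitative in the form $\kappa(\g)^y \equiv 1 \pmod{\pr^{n+1}}$. No serious obstacle arises beyond this bookkeeping.
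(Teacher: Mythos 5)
Your proof is correct and follows essentially the same route as the paper: both arguments reduce to the identity $\kappa(\G^{(n)})=U_{n+1}$ and the observation that $\kappa(\g)^y\in 1+\pr^{n+1}A_\pr$ for $\g\in\G^{(n)}$, which shows $s$ sends the closed ideal generated by $\{\g-1:\g\in\G^{(n)}\}$ (the kernel of $\F_\pr[[\G]]\to\F_\pr[\G_n]$) into the kernel of the reduction $C^0(\Z_p,A_\pr)\to C^0(\Z_p,A/\pr^{n+1})$. You merely spell out the binomial expansion a bit more explicitly and remark on $s$ being a ring homomorphism, points the paper leaves implicit.
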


\begin{proof} It suffices to remark that $\k(\G^{(n)})=U_{n+1}$. Hence for $\g\in\G^{(n)}$ and $y\in\Z_p$ we have
\[ s(\g)(y)=\vartheta_{\k(\g)}(y)=\k(\g)^y\in 1+\pr^{n+1}A_\pr \]
which implies that the ideal $(\g-1\,:\,\g\in\G^{(n)})$ in $\F_\pr[[\G]]$ is sent by $s$ into $C^0(\Z_p,\pr^{n+1}A_\pr)$.
The kernel of the natural projection $\F_\pr[[\G]]\rightarrow\F_\pr[\G_n]$ is precisely the closure of this ideal (that is,
the image in $\F_\pr[[\Gamma]]$ of $\mathfrak{I}_n\subset \Lambda$).
\end{proof}

We get a commutative diagram
\[ \begin{CD} \F_\pr[[\G]] @>s>> Dir(\Z_p,A_\pr)\\
@VVV @VVV \\
\F_\pr[\G_n] @>s_n>> C^0(\Z_p,A/\pr^{n+1})  \end{CD} \]
where the vertical maps are the natural ones. By construction one has $s=\plim{n}s_n$.

\begin{prop}\label{snNotInj}
The map $s_n$ is not injective for $n>0$. \end{prop}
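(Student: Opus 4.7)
The plan is to establish $\ker(s_n) \neq 0$ by a dimension count: the image of $s_n$ has $\F_\pr$-dimension growing only polynomially in $n$, whereas the domain $\F_\pr[\G_n]$ has $\F_\pr$-dimension $|\G_n| = q^{dn}$, exponential in $n$.

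The key calculation bounds $\dim_{\F_\pr}\mathrm{im}(s_n)$. For $u \in U_1$, expanding $\vartheta_u(y) = u^y = \sum_{k \geq 0}\binom{y}{k}(u-1)^k$ and reducing modulo $\pr^{n+1}$, the series truncates at $k = n$ since $(u-1)^k \in \pr^k$. Writing each $(u-1)^k \pmod{\pr^{n+1}}$ in the $\F_\pr$-basis $\{\pi_\pr^k, \pi_\pr^{k+1}, \dots, \pi_\pr^n\}$ of $\pr^k/\pr^{n+1}$ shows that $\mathrm{im}(s_n)$ is contained in the $\F_\pr$-span of the functions $\binom{y}{k}\pi_\pr^j$ with $0 \leq k \leq j \leq n$. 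A crucial sharpening: the $k=0$ term of every $\vartheta_u$ is simply the constant $1$, so only the pair $(k, j) = (0, 0)$ is contributed when $k = 0$. This yields $\dim_{\F_\pr}\mathrm{im}(s_n) \leq 1 + n(n+1)/2$. Under the standing hypothesis $q \geq 3$ one has $q^{dn} \geq 3^n$, and an easy induction gives $3^n > 1 + n(n+1)/2$ for every $n \geq 1$, so $\ker(s_n) \neq 0$.

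The main obstacle is precisely this sharpening: without the observation that $k = 0$ contributes only the constant $1$, the cruder bound $(n+1)(n+2)/2$ would equal $q^{dn} = 3$ in the borderline case $n = 1$, $q = 3$, $d = 1$, and the argument would break. A more hands-on alternative would be to show directly that the norm element $N := \sum_{\g \in \G_n}[\g]$ belongs to $\ker(s_n)$, using the classical identity $\sum_{a \in \F_\pr}a^m = -1$ if $(q^d-1)\mid m > 0$ and $0$ otherwise to conclude that $\sum_{u \in U_1/U_{n+1}} u^y \equiv 0 \pmod{\pr^{n+1}}$ for every $y \in \Z_p$.
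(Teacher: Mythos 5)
Your proof is correct, and it takes a genuinely different route from the paper's. The paper constructs an explicit nonzero kernel element: using $q>2$ it picks three $\gamma_i\in\Gamma$ with $\kappa(\gamma_i)=1+a_i\pi_\pr^n$ for three residues $a_i\in\F_\pr$ that are pairwise distinct mod $\pr$, observes that $\kappa(\gamma_i)^y\equiv 1+ya_i\pi_\pr^n\pmod{\pr^{n+1}}$, and solves the resulting $2\times 3$ linear system over $\F_\pr$ to find $x_1,x_2,x_3$ not all zero with $\sum x_i\gamma_i\in\ker(s_n)$. You instead show the \emph{image} of $s_n$ is too small: each reduced Dirichlet series $u^y\bmod\pr^{n+1}$ lies in the $\F_\pr$-span of the constant $1$ together with the functions $\binom{y}{k}\pi_\pr^j$ for $1\le k\le j\le n$, a space of dimension at most $1+n(n+1)/2$, which is strictly smaller than $\dim_{\F_\pr}\F_\pr[\Gamma_n]=q^{dn}\ge 3^n$; this is a counting argument rather than an explicit construction. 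Both arguments ultimately rely on $q>2$, but in different guises: the paper needs three distinct residues in $\F_\pr$, while you need $3^n>1+n(n+1)/2$ (and, as you correctly note, the sharper count with the constant term split off is genuinely required for the borderline case $q=3$, $d=1$, $n=1$). What your approach buys is stronger information — it shows $\dim_{\F_\pr}\ker(s_n)\ge q^{dn}-1-n(n+1)/2$, so the kernel is in fact large and grows exponentially with $n$ — whereas the paper's method only exhibits a single nonzero element. Your suggested alternative via the norm element $N=\sum_{\gamma\in\Gamma_n}[\gamma]$ is also sound: writing $u=1+\sum_{i=1}^n a_i\pi_\pr^i$ with $a_i\in\F_\pr$, every surviving monomial in the sum $\sum_u u^y\bmod\pr^{n+1}$ must involve all the $a_i$ (else a factor $q^d\equiv0$ appears), forcing $\pi_\pr$-degree $\ge n(n+1)/2>n$ when $n\ge2$, while the $n=1$ case reduces to $\sum_{a\in\F_\pr}a=0$, valid since $q^d-1>1$.
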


\begin{proof} Fix $n>0$ and choose $a_1$, $a_2$ and $a_3$ in $A_\pr$ so that $a_i$ and $a_j$ are different modulo
$\pr$ if $i\neq j$ (this is possible since we assume $q>2$). Consider the elements $\gamma_i\in\Gamma$ defined by
\[ \kappa(\g_i)=1+a_i\pi_\pr^n \qquad {\rm for }\ i=1,2,3 \ .\]
Our hypothesis on the $a_i$'s implies that the $\gamma_i$'s have $\F_\pr$-linearly independent images in $\F_\pr[\Gamma_n]$.
We need to find $x_i\in \F_\pr$ so to have
\[ x_1\kappa(\g_1)^y+x_2\kappa(\g_2)^y+x_3\kappa(\g_3)^y\in \pr^{n+1} A_\pr \]
for all $y\in\Z_p$. This is equivalent to
\[ x_1+x_2+x_3+y(x_1a_1+x_2a_2+x_3a_3)\pi_\pr^n \equiv 0 \pmod{\pr^{n+1}} \ ,\]
i.e.,
\[ \left\{\begin{array}{l}
x_1+x_2+x_3 = 0\\
x_1a_1+x_2a_2+x_3a_3 \equiv 0 \pmod{\pr} \end{array} \right. \ .\]
For any nontrivial solution of this linear system, the image in $\F_\pr[\Gamma_n]$ of $x_1\g_1+x_2\g_2+x_3\g_3$ is a
nontrivial element of the kernel of $s_n$.
\end{proof}

\subsection{Characters of $\Delta$}\label{SecTwoChar}
Let $\omega_\pr\colon A\rightarrow A_\pr$ be the morphism of $\F$-algebras obtained composing $A\twoheadrightarrow A/\pr=\F_\pr$ with
the lift $\F_\pr\hookrightarrow A_\pr$ (i.e., the {\em Teichm\"uller character} in positive
characteristic\,\footnote{The map $\omega_\pr$ can also be defined as the morphism of $\F$-algebras such that
$v_\pr(\theta-\omega_{\pr}(\theta))\geq1$: it satisfies $\omega_\pr(a)\equiv a \pmod{\pr}$
and corresponds to the choice of a root of $\pi_{\pr}$ in $\overline\F$ (because $\pi_\pr=\pi_\pr(\theta)\in A=\F[\theta]$ and we have
$\pi_\pr(\theta)\equiv \pi_\pr(\omega_{\pr}(\theta))\pmod\pr$, therefore $\pi_\pr(\omega_{\pr}(\theta))\equiv 0$).}).
Then any $a\in A-\pr$ is uniquely decomposed as
\begin{equation} \label{EqOmp} a=\omega_\pr(a)\langle a\rangle_{\pr} \end{equation}
where $\langle a\rangle_{\pr}\in U_1=1+\pr A_\pr$. The domain of $\omega_\pr$ can be extended to all of
$A_\pr$ (note that then the restriction of $\omega_\pr$ to $\F_\pr$ is just the identity) and equality \eqref{EqOmp}
holds for any $a\in A_\pr-\pr A_\pr$.

The restriction of $\omega_\pr\circ\kappa\colon G_\infty\rightarrow \F_\pr^*$ to $\Delta$ yields an isomorphism
$\Delta\rightarrow \F_\pr^*$, which in the rest of the paper will be simply denoted $\omega_\pr$, by an abuse
of notation meant to emphasize the ``Teichm\"uller-like'' quality of this characteristic $p$ character.
If, for any $a\in A-\pr$, we let $\sigma_a\in\Delta$ be the element such that
$\sigma_a(\varepsilon)=\Phi_a(\varepsilon)\;\forall\,\varepsilon\in\Phi[\pr]$ (recall that $\Delta\simeq\Gal(F_0/F)$), then we have
\[ \omega_\pr(\sigma_a)=\omega_{\pr}(a)\ . \]
Composition of $\kappa|_\Delta$ with the Teichm\"uller lift $\F_\pr^*\rightarrow\bmu_{q^d-1}$ yields a
character $\wt{\omega}_\pr\colon\Delta\rightarrow W^*$ (the {\em Teichm\"uller character} in characteristic 0). It satisfies
\[ \wt{\omega}_\pr(\sigma_a) \equiv \omega_\pr(a)\pmod{pW}\ . \]

A ($p$-adic) character $\chi$ on $\Delta$ is called {\em odd} if $\chi(\cali_\infty)\neq 1$ and {\em even} if $\chi(\cali_\infty)=1$.
Since all such characters are powers of $\wt\omega_\pr$, this definition amounts to say that $\wt\omega_\pr^i$ is even
if and only if $q-1$ divides $i$.

\subsubsection{Decomposition by characters}
For any $p$-adic character $\chi\in \Hom(\Delta,W^*)=:\widehat\Delta$ we put
\begin{equation}\label{DefIdemp}
e_\chi := \frac{1}{|\Delta|} \sum_{\delta\in \Delta} \chi(\delta^{-1})\delta \in W[\Delta] \end{equation}
for the idempotent associated with $\chi$ (we shall denote the trivial character by $\chi_0$).
We recall a few basic relations for the $e_\chi$:\begin{itemize}
\item for any $\delta\in \Delta$,
\begin{equation} \label{eqchidel} e_\chi\delta=\chi(\delta)e_\chi\,; \end{equation}
\item for any $\psi\in\wh\Delta$,
\[ \psi(e_\chi) =\left\{ \begin{array}{cl} 1 & {\rm if}\ \psi=\chi \\
0 & {\rm if}\ \psi\neq\chi \end{array} \right. \ ;\]
\item $\ds{\sum_{\chi\in\wh\Delta} e_\chi=1}$.
\end{itemize}
As usual, for any $W[\Delta]$-module $M$, we denote by $M(\chi)$ the $\chi$-part of $M$ (i.e., the submodule
$e_{\chi}M$) and we have a decomposition
\begin{equation} \label{eqMchi}
M \simeq \bigoplus_{\chi\in\widehat\Delta} M(\chi) \ .\end{equation}

\section{$\pr$-adic interpolation of the Carlitz-Goss $L$-function}\label{SecStick}
In this section we present the analytic side of our work, i.e., the Carlitz-Goss $\zeta$-function $\zeta_A$ and the
$\pr$-adic $L$-function we shall use to interpolate $\zeta_A$ at integers. Moreover we introduce the Stickelberger series which
will appear also in the computation of Fitting ideals of Tate modules and class groups in Section \ref{SecFitt}. Actually,
the Stickelberger series is the main hero of this section: as we shall see, it plays a universal role in interpolating
$L$-functions attached to abelian characters with no ramification outside a prescribed locus. In the case of $\C$-valued
characters and the complex $L$-functions attached to them, this will be clear from \eqref{eqls}. In Theorems \ref{thmtetzet}
and \ref{thmtetpr}, we shall see how, taking characteristic $p$-valued characters, the Stickelberger series interpolates
the $L$-functions defined by Goss. We also remark that in \cite{lltt2} the Stickelberger series is used as a $p$-adic $L$-function.

For the convenience of the reader we will recall different constructions and properties: some of them are known but we
lack an explicit reference including all of them.

\subsection{The Stickelberger series}
Recall that $\mathscr{P}_F$ is the set of places of $F$. Places different from
$\infty$ will be often identified with the correponding prime ideals of $A$.

The subset of $\mathscr{P}_F$ where the extension $\calf/F$ ramifies is $S=\{\pr,\infty\}$. Define $G_S$ as the Galois group
of the maximal abelian extension $\calf_S$ of $F$ which is unramified outside $S$. For any $\gotq\in\mathscr{P}_F-S$,
let $\Fr_\gotq\in G_S$ denote the corresponding (arithmetic) Frobenius automorphism.

\begin{defin}\label{DefStick1}
We define the {\em Stickelberger series} by
\begin{equation}\label{Sticel}
\Theta_{\calf_S/F,S}(X):= \prod_{\gotq\in\mathscr{P}_F-S}(1-\Fr_\gotq^{-1} X^{\deg(\gotq)})^{-1}\in \Z[G_S][[X]]\ .
\end{equation}
More generally, for any closed subgroup $U<G_S$, we define
\[ \begin{array}{ll} \Theta_{\calf_S^U/F,S}(X) & := \pi_{G_S/U}^{G_S}(\Theta_{\calf_S/F,S})(X) \\
\ & \\
\ & \ =\ds{\prod_{\gotq\in\mathscr{P}_F-S}(1-\pi_{G_S/U}^{G_S}(\Fr_\gotq^{-1})X^{\deg(\gotq)})^{-1} }
\in \Z[\Gal(\calf_S^U/F)][[X]]\ , \end{array} \]
where $\pi_{G_S/U}^{G_S}\colon\Z[G_S]\rightarrow\Z[\Gal(\calf_S^U/F)]$ is the map induced by the projection $G_S\twoheadrightarrow G_S/U$.
\end{defin}

The series in \eqref{Sticel} is well-defined, since for any $k$ there are only finitely many places of degree $k$.

\subsubsection{Convergence}
Let $R$ be a topological ring, complete with respect to a non-archimedean absolute value. The algebra $R[[G_S]]$
is the inverse limit of $R[\Gal(E/F)]$ as $E$ varies among finite subextensions of $\calf_S/F$; as such, it is endowed
with a topological structure. (To\-po\-lo\-gi\-cal\-ly each $R[\Gal(E/F)]$ is the product of $[E:F]$ copies of $R$
and $R[[G_S]]$ has the coarsest topology such that all projections $R[[G_S]]\rightarrow R[\Gal(E/F)]$ are continuous.)

For any topological ring $\calr$ the Tate algebra $\calr\langle X\rangle$ consists of those power series in $\calr[[X]]$
whose coefficients tend to 0. In particular, $R[[G_S]]\langle X\rangle$ contains all those power series whose image in
$R[G_S/U][[X]]$ is a polynomial for any open subgroup $U<G_S$.

For any unitary $R$, the natural map $\Z\rightarrow R$ (by $1\mapsto 1$) allows to think of $\Theta_{\calf_S/F,S}(X)$
as an element in $R[[G_S]][[X]]$. Moreover, for any group homomorphism $\alpha\colon G_S\rightarrow R^*$, the extension
by linearity to a map $\alpha\colon\Z[G_S]\rightarrow R$ yields a power series $\alpha\big(\Theta_{\calf_S/F,S}\big)(X)\in R[[X]]$.

\begin{prop} \label{prtatalg}
Let $R$ be a unitary topological $\Z_p$-algebra, complete with respect to a non-archimedean absolute value.
Then the series $\Theta_{\calf_S/F,S}(X)$ defines an element in the Tate al\-ge\-bra $R[[G_S]]\langle X\rangle$.
\end{prop}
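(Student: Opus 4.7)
The strategy is to reduce to finite quotients via the inverse-limit topology on $R[[G_S]]$, decompose by characters, and apply Weil's rationality theorem to the non-trivial characters while handling the trivial character by explicit computation and topological nilpotency of $q=p^e$ in $R$.

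Since $R[[G_S]]\simeq\varprojlim_E R[\Gal(E/F)]$ (over finite subextensions $E/F\subset\calf_S/F$) carries the inverse-limit topology, a power series in $R[[G_S]][[X]]$ lies in $R[[G_S]]\langle X\rangle$ if and only if its image in each $R[\Gal(E/F)][[X]]$ has coefficients tending to zero. I fix one such $E$, set $G:=\Gal(E/F)$, and extend the absolute value of $R$ to a finite complete extension $R'\supset R$ containing the $|G|$-th roots of unity, so that $R'[G]\simeq\prod_{\chi\in\widehat{G}}R'$. The image of $\Theta_{\calf_S/F,S}(X)$ in $R'[G][[X]]$ then decomposes as $\sum_{\chi}L_\chi(X)\,e_\chi$, where
\[ L_\chi(X):=\prod_{\gotq\in\mathscr{P}_F-S}\bigl(1-\chi(\Fr_\gotq^{-1})\,X^{\deg\gotq}\bigr)^{-1}\in R'[[X]]. \]
Because $R[G]$ embeds as a closed $R$-submodule of $R'[G]$ (a free module of finite rank), it suffices to verify each $L_\chi(X)\in R'\langle X\rangle$.

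For $\chi\neq\chi_0$, the completed abelian Dirichlet $L$-function $L_F(\chi^{-1},X)$ (product of Euler factors at all primes where $\chi$ is unramified) differs from $L_\chi(X)$ only by polynomial correction factors $\bigl(1-\chi(\Fr_\gotq^{-1})X^{\deg\gotq}\bigr)$ coming from those primes $\gotq\in S$ at which $\chi$ happens to be unramified. Weil's theorem on the rationality of abelian $L$-functions for smooth projective curves over $\F_q$ asserts that $L_F(\chi^{-1},X)$ is a polynomial in $X$; hence so is $L_\chi(X)$, which is trivially in the Tate algebra.

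For $\chi=\chi_0$ the product telescopes using $\zeta_F(X)=1/((1-X)(1-qX))$ (the zeta function of $\mathbb{P}^1_{\F_q}$), the degree-$1$ place $\infty$ and the degree-$d$ place $\pr$, giving
\[ L_{\chi_0}(X)=(1-X)(1-X^d)\,\zeta_F(X)=\frac{1-X^d}{1-qX}. \]
Since $R$ is a $\Z_p$-algebra with $|p|_R<1$, the element $q=p^e$ is topologically nilpotent, so $(1-qX)^{-1}=\sum_{n\geq 0}q^nX^n\in R\langle X\rangle$; multiplying by the polynomial $1-X^d$ preserves membership in $R'\langle X\rangle$. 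The key observation—indeed the reason the statement holds—is that without the cancellation of $(1-X)^{-1}$ by the Euler factor at $\infty$, the trivial-character contribution would be $\zeta_F(X)$ itself, whose coefficients $(q^{n+1}-1)/(q-1)$ have $R$-absolute value identically $1$ and do not tend to zero. Thus the inclusion $\infty\in S$ is essential; granted it, the only real ingredient beyond bookkeeping is the (classical) Weil theorem.
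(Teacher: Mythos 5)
The central step of your argument — extending to $R'$ "containing the $|G|$-th roots of unity, so that $R'[G]\simeq\prod_{\chi\in\widehat{G}}R'$" — contains a genuine error. That isomorphism requires $|G|$ to be invertible in $R'$: the idempotents $e_\chi=\frac{1}{|G|}\sum_\delta\chi(\delta^{-1})\delta$ need $|G|^{-1}$. Here $R$ is a $\Z_p$-algebra, so $p$ is not a unit, and for any finite subextension $E$ of $\calf_S/F$ deep enough in the tower $p\mid[E:F]=|G|$; thus $R'[G]$ is emphatically not a product of copies of $R'$ (for instance $\Z_p[\zeta_p][\Z/p\Z]\cong\Z_p[\zeta_p][T]/(T^p-1)$ is a connected local ring). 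Consequently the reduction "it suffices to verify each $L_\chi(X)\in R'\langle X\rangle$" is not justified: the map $R'[G]\to\prod_\chi R'$ is at best an injection with cokernel killed by $|G|$, and passing from coefficient convergence in each factor back to convergence in $R'[G]$ requires an extra argument (Fourier inversion gives $|G|\cdot(\text{coefficient})\to 0$; to divide by $|G|$ you must invoke that $R'$ is a domain equipped with a \emph{multiplicative} non-archimedean absolute value, and you must also justify that $R'$ can be chosen complete). None of this is addressed, and as written the proof simply asserts a false isomorphism.

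The paper's proof sidesteps the issue entirely. It also uses a character decomposition and Weil's rationality theorem, but over $\C$ — i.e.\ in characteristic $0$, where $|G|^{-1}$ exists — to deduce an integral statement: after multiplying by an auxiliary Euler factor $(1-q^{\deg\gotq_0}\Fr_{\gotq_0}^{-1}X^{\deg\gotq_0})$ (which kills the $(1-q^{1-s})$ pole of the trivial-character piece for \emph{all} quotients uniformly), each $\psi$-specialization is a polynomial in $q^{-s}$; since this holds for every complex character of every finite quotient $\Gal(E/F)$, the projection of $\Theta_{\calf_S/F,S,\{\gotq_0\}}$ to $\Z[\Gal(E/F)][[X]]$ is a polynomial, so $\Theta_{\calf_S/F,S,\{\gotq_0\}}\in\Z[[G_S]]\langle X\rangle$. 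Dividing by the auxiliary factor (a unit in $\Z_p[[G_S]]\langle X\rangle$ precisely because $q^n\to0$ $p$-adically — the same phenomenon you correctly isolate in your $\chi_0$ discussion) gives $\Theta_{\calf_S/F,S}\in\Z_p[[G_S]]\langle X\rangle$, and only at the very end does one base-change along $\Z_p\to R$. This way there is never any character decomposition of an $R$-linear group algebra, and no assumption on divisibility of $|G|$ is needed. Your trivial-character computation $L_{\chi_0}(X)=\frac{1-X^d}{1-qX}$ and your observation about the essential role of $\infty\in S$ are correct and illuminating; to make the overall argument sound you should either work integrally over $\Z$ and base-change at the end, or carefully replace the asserted isomorphism with the (correct) statement about the injection $R'[G]\hookrightarrow\prod_\chi R'$ and supply the missing topological argument.
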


\begin{proof} The proof is essentially the same as in \cite[Proposition 4.1.1]{lltt2} (see also \cite[\S5.3]{BBL1}),
so here we just sketch the basic ideas.

Let $\psi\colon G_S\rightarrow\C^*$ be a continuous character ($G_S$ has the profinite topology, so $\psi$ factors through
a subgroup of finite index). Then
\begin{equation} \label{eqls}
\psi\big(\Theta_{\calf_S/F,S}\big)(q^{-s})=
\prod_{\gotq\in\mathscr{P}_F-S}\left(1-\frac{\psi(\Fr_\gotq^{-1})}{(N\gotq)^s}\right)^{-1}=:L_S(s,\psi)
\end{equation}
is (possibly up to the Euler factors from places in $S$) the classical complex $L$-function attached to $\psi$
(here $N\gotq:=q^{\deg(\gotq)}$ is the order of the finite field $\F_\gotq$ and we assume $Re(s)>1$ to ensure
convergence of the infinite product). More precisely, one has
\begin{equation}\label{DefGlobLSer}
L(s,\psi):= L_S(s,\psi)\cdot\prod_{v\in S}(1-\psi(v)q^{-s\deg(v)})^{-1} \ , \end{equation}
here $\psi(v)$ denotes the value of $\psi$ on the inverse of the Frobenius of $v$ (an element in $G_S/\ker(\psi)$
if $v$ is not ramified in $\calf_S^{\ker(\psi)}/F$), with the usual convention $\psi(v)=0$ if $\calf_S^{\ker(\psi)}/F$ is ramified at $v$.

A theorem of Weil (see, e.g., \cite[V, Th\'eor\`eme 2.5]{Ta}) implies that $L(s,\psi)$ is a polynomial in $q^{-s}$,
unless $\psi=\psi_0$ is trivial, then one has
\begin{equation}\label{DefGlobLSerTriv} L(s,\psi_0)=\frac{1}{(1-q^{-s})(1-q^{1-s})} \ . \end{equation}
Thus $L_S(s,\psi)$ is a rational function of $q^{-s}$, with denominator bounded independently of $\psi$.

Choose an auxiliary place $\gotq_0\notin S$ and put
\[ \Theta_{\calf_S/F,S,\{\gotq_0\}}(X):=
(1-q^{\deg(\gotq_0)}\Fr_{\gotq_0}^{-1}X^{\deg(\gotq_0)})\Theta_{\calf_S/F,S}(X) \ .\]
By Weil's theorem for all $\psi$ as above, $\psi\big(\Theta_{\calf_S/F,S,\{\gotq_0\}}\big)(q^{-s})$ belongs to $\C[q^{-s}]$
(more precisely, to $\Z[\psi(G_S)][q^{-s}]$\,). As a consequence, one gets
\[ \pi_{G_S/U}^{G_S}(\Theta_{\calf_S/F,S,\{\gotq_0\}})(X)\in\Z[\Gal(\calf_S^U/F)][X] \]
for all open subgroups $U<G_S$ and hence $\Theta_{\calf_S/F,S,\{\gotq_0\}}(X)\in\Z[[G_S]]\langle X\rangle$. It follows
that also $\Theta_{\calf_S/F,S}(X)$ is in $\Z_p[[G_S]]\langle X\rangle$, because the ratio between
$\Theta_{\calf_S/F,S,\{\gotq_0\}}(X)$ and $\Theta_{\calf_S/F,S}(X)$ is a unit in the Tate algebra.

Finally, for $R$ as in the hypothesis, the natural map $\Z_p\rightarrow R$ is extended to a continuous homomorphism
$\Z_p[[G_S]][[X]]\rightarrow R[[G_S]][[X]]$. Our proposition follows from the restriction
$\Z_p[[G_S]]\langle X\rangle\rightarrow R[[G_S]]\langle X\rangle$.
\end{proof}

\begin{rem} We remind readers of two important properties of Stickelberges series. Let $E/F$ be a finite subextension of
$\calf_S/F$. Then one has \begin{enumerate}
\item  $\Theta_{E/F,S}(X)\in\frac{1}{(1-q X)|G|}\ov{\Z}[G][X]\cap\Z[G][[X]]$ (where $\ov{\Z}$ is the integral closure of
$\Z$ in the algebraic closure of $\Q$), see \cite[Theorem 15.13]{Ro};
\item the Brumer-Stark element $w_{E/F}=(q-1)\Theta_{E/F,S}(1)\in\Z[G]$ annihilates $\CaCl(E)$, see \cite[Chapter 15]{Ro}.
\end{enumerate}
\end{rem}

\begin{thm} \label{thmconv} Let $R$ be as in Proposition \ref{prtatalg} and $\alpha\colon G_S\rightarrow R^*$ a continuous
group homomorphism. The power series $\alpha\big(\Theta_{\calf_S/F,S}\big)(X)$ converges on the unit disk $\{x\in R:|x|\leq1\}$.
\end{thm}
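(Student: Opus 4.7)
The plan is to derive the theorem as a formal consequence of Proposition \ref{prtatalg}, which places $\Theta_{\calf_S/F,S}(X)$ inside the Tate algebra $R[[G_S]]\langle X\rangle$. Writing $\Theta_{\calf_S/F,S}(X)=\sum_{n\geq 0} c_n X^n$ with $c_n\to 0$ in the natural (inverse-limit of product) topology on $R[[G_S]]=\varprojlim_U R[G_S/U]$, the goal reduces to producing a continuous extension of $\alpha$ to $R[[G_S]]$ so that it sends these coefficients to a null sequence in $R$: this will place the image $\alpha(\Theta_{\calf_S/F,S})(X)=\sum_n \tilde\alpha(c_n)X^n$ inside $R\langle X\rangle$, which is the standard algebra of analytic functions on the closed unit disk $\{x\in R:|x|\leq 1\}$.

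First I would construct a continuous $R$-algebra homomorphism $\tilde\alpha\colon R[[G_S]]\to R$ that restricts to $\alpha$ on $G_S$. The image $\alpha(G_S)$ is a compact subgroup of $R^*$, and in a non-archimedean $\Z_p$-algebra the relation $|\alpha(g)|\cdot|\alpha(g^{-1})|=1$ combined with boundedness forces $|\alpha(g)|=1$ for every $g\in G_S$. Viewing elements of $R[[G_S]]$ as $R$-valued measures on the profinite group $G_S$, one defines
\[ \tilde\alpha(\mu):=\int_{G_S}\alpha(g)\,d\mu(g), \]
which is well defined because $\alpha$ is continuous and bounded. Continuity of $\tilde\alpha$ is a standard property of integration, $R$-linearity is built in, and multiplicativity follows from the convolution formula $\int \alpha\,d(\mu*\nu)=\int\!\!\int\alpha(g)\alpha(h)\,d\mu(g)\,d\nu(h)$ together with the fact that $\alpha$ is a group homomorphism.

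Once $\tilde\alpha$ is available the remainder of the argument is purely formal: coefficient-wise application of the continuous map $\tilde\alpha$ carries $R[[G_S]]\langle X\rangle$ into $R\langle X\rangle$, and combined with Proposition \ref{prtatalg} this identifies $\alpha(\Theta_{\calf_S/F,S})(X)$ as an element of the Tate algebra over $R$, which converges on the closed unit disk. The main obstacle is the existence and continuity of $\tilde\alpha$: since $G_S$ is an infinite profinite group and $\alpha$ need not factor through any finite quotient, a naive finite linear extension does not suffice and one must genuinely invoke integration of continuous functions against measures on profinite groups. A concrete alternative that sidesteps the abstract measure-theoretic setup is to exploit the factorization $\Theta_{\calf_S/F,S}(X)=(1-q^{\deg\gotq_0}\Fr_{\gotq_0}^{-1}X^{\deg\gotq_0})^{-1}\Theta_{\calf_S/F,S,\{\gotq_0\}}(X)$ used in the proof of Proposition \ref{prtatalg}: the first factor is analytic on the closed unit disk because $|q|<1$ in any complete $\Z_p$-algebra, making the geometric expansion $\sum_{k\geq 0}\bigl(q^{\deg\gotq_0}\alpha(\Fr_{\gotq_0}^{-1})\bigr)^{k}X^{k\deg\gotq_0}$ converge for $|X|\leq 1$; and the second factor has finite-quotient projections which are honest polynomials (by Weil), so that after applying $\alpha$ one obtains a series whose coefficients genuinely tend to zero in $R$.
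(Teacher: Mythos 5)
Your argument is correct and follows essentially the same route as the paper: Proposition \ref{prtatalg} places $\Theta_{\calf_S/F,S}$ in the Tate algebra $R[[G_S]]\langle X\rangle$, and applying the continuous extension of $\alpha$ to a ring map $R[[G_S]]\to R$ coefficientwise lands the series in $R\langle X\rangle$. The paper asserts the existence and continuity of that induced map in one line, whereas you spell out the measure/integration construction (and sketch the factorization variant); both are the same idea.
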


\begin{proof} The ring homomorphism $R[[G_S]]\rightarrow R$ induced by $\alpha$ is continuous; hence it extends to a
homomorphism of Tate algebras $R[[G_S]]\langle X\rangle\rightarrow R\langle X\rangle$. Thus, by Proposition
\ref{prtatalg}, $\alpha\big(\Theta_{\calf_S/F,S}\big)(X)$ is in $R\langle X\rangle$, which, by definition, consists
exactly of those power series convergent on the unit disk.
\end{proof}

\subsubsection{Stickelberger series in the $\pr$-cyclotomic tower} \label{sssStickcycl}
In the following, we shall be particularly interested in the image of the Stickelberger series along the
$\pr$-cyclotomic tower. Define
\begin{equation}\label{globStickinf}
\Theta_\infty(X):=\Theta_{\calf/F,S}(X)\in\Z[G_\infty][[X]] \end{equation}
and, for all $n\in\N$,
\[ \Theta_n(X):=\Theta_{F_n/F,S}(X)\in\Z[G_n][[X]] \ .\]
We shall think of $\Theta_\infty$ and $\Theta_n$ as power series with coefficients respectively in $W[[G_\infty]]$ and $W[G_n]$.

Any element in $G_\infty$ can be uniquely written as $\delta\gamma$, with $\delta\in\Delta$ and $\gamma\in\Gamma$.
Consequently, given $\chi\in \Hom(\Delta,W^*)$ we can define a group homomorphism $G_\infty\rightarrow\Lambda^*$
by $\delta\gamma\mapsto\chi(\delta)\gamma$. By linearity and continuity, this can be extended to a ring homomorphism
(which, by abuse of notation, we still denote by the same symbol) $\chi\colon W[[G_\infty]]\rightarrow\Lambda$.
The decomposition \eqref{eqMchi} applied to $W[[G_\infty]][[X]]$ then yields the following definition.

\begin{defin}\label{DefSticFuncCyc} For any $\chi\in \Hom(\Delta,W^*)$, the {\em $\chi$-Stickelberger series}
for the $\pr$-cyclo\-to\-mic tower is
\[ \Theta_{\infty}(X,\chi):=\chi(\Theta_{\infty})(X)\in\Lambda[[X]] \ .\]
Similarly, we put
\begin{equation} \label{tetn}
\Theta_n(X,\chi):=\chi(\Theta_n)(X) \in W[\G_n][[X]]\ . \end{equation}
\end{defin}

The series $\Theta_n(X,\chi)$ form a projective system: let $\pi^{n+1}_n\colon W[\G_{n+1}][[X]]\rightarrow W[\G_n][[X]]$
be the projection induced by the natural map $\G_{n+1}\twoheadrightarrow\G_n$, then we have
\[ \pi^{n+1}_n(\Theta_{n+1}(X,\chi))=\Theta_n(X,\chi) \]
and $\Theta_{\infty}(X,\chi)=\varprojlim\Theta_n(X,\chi)$ for all $\chi\in \wh{\Delta}$. Moreover, \eqref{eqchidel} yields
\begin{equation}\label{EqChiTheta} e_\chi\Theta_\infty(X)=\Theta_\infty(X,\chi)e_{\chi} \end{equation}
and
\[ \Theta_\infty(X)=\sum_{\chi\in\wh\Delta}\Theta_{\infty}(X,\chi)e_\chi \]
(of course these relations descend to level $n$ for all $n\in\N$). Finally, the proof of Proposition \ref{prtatalg}
shows that $\Theta_n(X,\chi)\in W[\G_n][X]$ if $\chi\neq\chi_0$ and $\Theta_n(X,\chi_0)\in\frac{1}{1-q X}W[\G_n][X]$.

\subsection{Carlitz-Goss $\zeta$-function and Bernoulli-Goss numbers}\label{CGBG}
We recall the con\-struc\-tion of Goss $L$-function and the main properties needed in our work (a
general re\-fe\-ren\-ce is \cite[Chapter 8]{Goss}).

As usual, $F_\infty$ denotes the completion of $F$ at $\infty$ and $\C_\infty$ is the completion of an algebraic closure
of $F_\infty$. The valuation on $F_\infty$ extends to $v_{\infty}\colon\C_{\infty}\rightarrow\Q\cup\{\infty\}$.
We also fix an embedding of $\ov{F}$ in $\C_\infty$. Finally, let $U_1(\infty)$ denote the group of 1-units in $F_\infty^*$.

Since we are taking $F=\F(\theta)$, a somewhat natural choice of uniformizer at $\infty$ is $\theta^{-1}$. Fixing a uniformizer
establishes a sign function $sgn\colon F_\infty^*\rightarrow\F^*$, which sends $x\in F_\infty^*$ into the residue of
$x\theta^{v_\infty(x)}$, and a projection
\[ F_\infty^*\longrightarrow U_1(\infty)\;,\quad \;x\mapsto\langle x\rangle_{\infty}:=\frac{x\theta^{v_\infty(x)}}{sgn(x)} \ .\]
Note that one has $\ker(sgn)=\theta^\Z\times U_1(\infty)$.

\begin{rem} These maps can be made more ``concrete'' by the following observation. Let $a\in A-\{0\}$ and write it as
$a=a_0+\ldots+a_n\theta^n$, with $n=\deg(a)$ and $a_i\in\F$.
Then we have
\begin{equation} \label{eqsgninft}
sgn(a)=a_n\in\F^*\quad\text{and} \quad  \langle a\rangle_{\infty}=
\frac{a}{\theta^{\deg(a)} sgn(a)} \in1+\theta\,\F[[\theta^{-1}]] \ .\end{equation}
\end{rem}

\subsubsection{The group $\bbS_\infty$} \label{sss:Sinf}
Let $\I_F$ denote the group of ideles of $F$. Then we have
\begin{equation} \label{eqidel}
\I_F/F^*\simeq\ker(sgn)\times\prod_{\gotq\in\mathscr{P}_F-\{\infty\}}A_\gotq^*=:\mathcal D \ , \end{equation}
where $A_\gotq$ denotes the completion of $A$ with respect to $\gotq$ and the isomorphism is given by the embedding
of the right-hand side as a subgroup of $\I_F$.

The group of $\C_\infty$-valued principal quasi-characters on $\I_F/F^*$ is
\[ \bbS_\infty:=\C_\infty^*\times\Z_p \ .\]
For $s=(x,y)\in \bbS_{\infty}$, we define a continuous homomorphism $\ker(sgn)\longrightarrow\C_\infty^*$ by
\begin{equation} \label{eqsinfexp}
a\mapsto a^s:=x^{-v_\infty(a)}\langle a\rangle_{\infty}^y \ .\end{equation}
This map is extended to all of $\I_F$ by the projection to $\ker(sgn)$ induced by the isomorphism \eqref{eqidel}.

The group structure on $\bbS_\infty$ is given by $(x_1,y_1)+(x_2,y_2):=(x_1x_2,y_1+y_2)$. We have an injection
$\Z\hookrightarrow \bbS_{\infty}$, by
\[ j\mapsto c_j:=(\theta^j,j) \ .\]
By \eqref{eqsgninft} we get $a^{c_j}=a^j$ for all $j\in\Z$ and monic $a\in A$.

In analogy with the complex half-plane $\C^+:=\{z\in\C\mid\Re(z)>1\}$, we define a ``half-plane''
\[ \bbS_\infty^+:=\{(x,y)\in\bbS_\infty:|x|>1\} \ . \]

\subsubsection{From $\Theta_{\calf_S/F,S}$ to $\zeta_A$}
Let $A_+$ be the set of monic polynomials in $A$.
Thinking of $A$ as a subset of $F_\infty$, we have $A_+=A\cap\ker(sgn)$.

\begin{defin}\label{DefGCZetaFunc}
The {\em Carlitz-Goss $\zeta$-function} is defined as
\begin{equation} \label{eqzetgoss} \zeta_A(s):=\sum_{a\in A_+}a^{-s} \quad ,\ s\in\bbS_\infty\ .\end{equation}
\end{defin}

\noindent For $s=(x,y)$, we have $a^s=x^{\deg(a)}\langle a\rangle_{\infty}^y$, hence $|a^{-s}|=|x|^{-\deg(a)}$.
It follows that the series \eqref{eqzetgoss} converges on $\bbS_\infty^+$. (Note the analogy with convergence of
the series defining $L(s,\psi)$ for $Re(s)>1$.)

Class field theory identifies the group $U_1(\infty)$ with a factor of $G_S$. Consequently, the construction in
Section \ref{sss:Sinf} can be used to define $\C_\infty$-valued characters on $G_S$. More precisely, for $y\in\Z_p$
let $\psi_y\colon G_S\rightarrow\C_\infty^*$ be the homomorphism obtained by composing the class field theoretic
projection $\rho\colon G_S\twoheadrightarrow U_1(\infty)$ with $(1,y)\in\bbS_\infty$. Then Theorem \ref{thmconv}
shows that $\psi_y(\Theta_{\calf_S/F,S})(x)$ converges for all $x\in\C_\infty$ such that $|x|\leq1$.

\begin{thm} \label{thmtetzet} For all $s=(x,y)\in\bbS_\infty^+$, we have
\begin{equation} \label{eqtetzet}
\psi_{-y}(\Theta_{\calf_S/F,S})(x^{-1})=(1-\pi_\pr^{-s})\zeta_A(s)\ . \end{equation}
\end{thm}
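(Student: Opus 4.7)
The plan is to match both sides of \eqref{eqtetzet} as Euler products over $\gotq\in\mathscr{P}_F-S$, with the identification of factors coming from class field theory.

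First I would expand the Carlitz--Goss $\zeta$-function into an Euler product. Since $A$ is a PID whose monic polynomials correspond bijectively to nonzero ideals, and since $a\mapsto a^s$ is multiplicative on $A_+$ by \eqref{eqsinfexp}, the classical geometric-series manipulation yields
\[
\zeta_A(s)=\prod_{\gotq\in\mathscr{P}_F-\{\infty\}}(1-\pi_\gotq^{-s})^{-1}\qquad (s\in\bbS_\infty^+),
\]
where $\pi_\gotq\in A_+$ is the monic irreducible generator of $\gotq$. Convergence is ensured since $|\pi_\gotq^{-s}|=|x|^{-\deg(\gotq)}\to 0$ in the non-archimedean field $\C_\infty$. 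Isolating the $\gotq=\pr$ factor gives $(1-\pi_\pr^{-s})\zeta_A(s)=\prod_{\gotq\in\mathscr{P}_F-S}(1-\pi_\gotq^{-s})^{-1}$.

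For the left-hand side, Theorem \ref{thmconv} tells us that $\psi_{-y}(\Theta_{\calf_S/F,S})(X)$ lies in the Tate algebra $\C_\infty\langle X\rangle$, so evaluation at $X=x^{-1}$ (with $|x|>1$) is legitimate, and continuity of $\psi_{-y}$ lets us distribute it across the product \eqref{Sticel}:
\[
\psi_{-y}(\Theta_{\calf_S/F,S})(x^{-1})=\prod_{\gotq\in\mathscr{P}_F-S}\bigl(1-\psi_{-y}(\Fr_\gotq^{-1})\,x^{-\deg(\gotq)}\bigr)^{-1}.
\]
Matching factors reduces the theorem to the identity $\psi_{-y}(\Fr_\gotq^{-1})=\langle\pi_\gotq\rangle_\infty^{-y}$; since $\psi_{-y}(\sigma)=\rho(\sigma)^{-y}$ by construction, this amounts to $\rho(\Fr_\gotq)=\langle\pi_\gotq\rangle_\infty^{-1}$ (up to the arithmetic-vs.-geometric convention for the Artin map).

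The latter is a standard class field theory computation. For $\gotq\in\mathscr{P}_F-S$, the idele $(\ldots,1,\pi_\gotq,1,\ldots)$ with $\pi_\gotq$ in the $\gotq$-slot maps under Artin reciprocity to $\Fr_\gotq\in G_S$. To compute its image under \eqref{eqidel}, I multiply by $\pi_\gotq^{-1}\in F^*$: monicity of $\pi_\gotq$ places $\pi_\gotq^{-1}$ in $\ker(sgn)$, at every finite prime $\gotq'\neq\gotq$ the value $\pi_\gotq^{-1}$ is a unit, and the $\gotq$-slot is now $1$. Projecting the $\ker(sgn)$-coordinate through $\ker(sgn)=\theta^\Z\times U_1(\infty)$ returns $\langle\pi_\gotq^{-1}\rangle_\infty=\langle\pi_\gotq\rangle_\infty^{-1}$, which is $\rho(\Fr_\gotq)$. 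Substituting this into the Euler product identifies $\psi_{-y}(\Fr_\gotq^{-1})\,x^{-\deg(\gotq)}$ with $\pi_\gotq^{-s}$, and the two products then agree term by term.

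The main obstacle will be keeping the normalization conventions coherent: arithmetic versus geometric Frobenius in the Artin map, the inversion $\pi_\gotq\leftrightarrow\pi_\gotq^{-1}$ forced when moving an idele into its canonical $\mathcal D$-representative, and the twist by $-y$ rather than $y$ in the statement. These three sign choices must conspire so that $\psi_{-y}(\Fr_\gotq^{-1})=\langle\pi_\gotq\rangle_\infty^{-y}$ holds on the nose; everything else is a routine manipulation of absolutely convergent non-archimedean Euler products.
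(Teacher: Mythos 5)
Your proposal is correct and follows essentially the same route as the paper: expand $\zeta_A$ as an Euler product, distribute $\psi_{-y}$ across the Stickelberger Euler product, and reduce to the class field theory identity $\rho(\Fr_\gotq)=\langle\pi_\gotq\rangle_\infty^{-1}$, which you verify exactly as the paper does by multiplying the idele $i_\gotq$ by $\pi_\gotq^{-1}\in F^*$ to place it in the fundamental domain $\mathcal D$ and then projecting to $U_1(\infty)$. The sign bookkeeping you flag as the main hazard indeed works out precisely as in the paper.
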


\noindent (Recall that $\pi_\pr$ is the monic irreducible generator of the ideal $\pr$ in $A$.)

\begin{proof} For every place $\gotq$ in $\mathscr{P}_F-S$, let $\pi_\gotq\in A_+$ denote the monic generator of the corresponding
prime ideal in $A$. Then \eqref{eqzetgoss} can be rewritten as an Euler product
\begin{equation} \label{eqeulprodzet}
\zeta_A(s)=\prod_{\gotq\in\mathscr{P}_F-\{\infty\}}(1-\pi_\gotq^{-s})^{-1}=
\prod_{\gotq\in\mathscr{P}_F-\{\infty\}}(1-\langle\pi_\gotq\rangle_\infty^{-y}x^{-\deg(\gotq)})^{-1}\ .
\end{equation}

By class field theory, we have a reciprocity map $rec\colon\I_F\rightarrow G_S$ with dense image isomorphic to $\ker(sgn)\times A_\pr^*$.
Using \eqref{eqidel}, the composition $\rho\circ rec$ is just the projection
\[ \mathcal D=\theta^\Z\times U_1(\infty)\times\prod_{\gotq\in\mathscr{P}_F-\infty}A_\gotq^*\longrightarrow U_1(\infty)\ .\]
For $\gotq$ in $\mathscr{P}_F-S$, let $i_\gotq\in\I_F$ denote the idele having $\pi_\gotq$ as its $\gotq$-component and 1 as
component at all other places: then $\Fr_\gotq=rec(i_\gotq)$. By the diagonal embedding $F^*\hookrightarrow\I_F$, we also get
$rec(i_\gotq)=rec(i_\gotq a)$ for all $a\in F^*$. Since $\pi_\gotq$ belongs to $F^*$ and $i_\gotq\pi_\gotq^{-1}$ is in the
fundamental domain $\mathcal D$, we finally obtain $\rho(\Fr_\gotq)=\langle\pi_\gotq^{-1}\rangle_\infty$\,.

Thus $\psi_{-y}(\Fr_\gotq^{-1})=\langle\pi_\gotq\rangle_\infty^{-y}$ and
\[ \psi_{-y}(\Theta_{\calf_S/F,S})(x^{-1})=
\prod_{\gotq\in\mathscr{P}_F-S}(1-\langle\pi_\gotq\rangle_\infty^{-y}x^{-\deg(\gotq)})^{-1}\ .\]
Comparison with \eqref{eqeulprodzet} completes the proof.
\end{proof}

\noindent Theorem \ref{thmtetzet} can be used to obtain analytical continuation of $\zeta_A$ on the ``boundary'' of
$\bbS_\infty^+$ (that is, $\{s=(x,y):|x|=1\}$), since, by Theorem \ref{thmconv}, the left-hand side of \eqref{eqtetzet}
converges if $|x|=1$.

\begin{rem} \label{remintegr} Let $R$ be a topological ring: then the ring of $R$-valued distributions\,\footnote{By $R$-valued
distributions on a locally profinite group $G$ we mean the
linear functionals on the space of compactly supported locally constant functions $G\rightarrow R$.}
on $\ker(sgn)$ is isomorphic to $R[[U_1(\infty)]][[X]]$. This suggests that
equation \eqref{eqtetzet} can be interpreted as providing an integral formula for the Carlitz-Goss zeta function (namely,
integration of the quasi-character $s$ against the distribution induced by the Stickelberger series; a variant of
this will be made explicit in the proof of Theorem \ref{StickLfun}). Integral formulae for $\zeta_A$ and its generalizations
were already known (starting with Goss's foundational paper \cite{goss2}; see \cite[\S5.7]{thakur} for a quick introduction
to the topic), but (to the best of our knowledge) were all based on measures on some additive group; our approach instead
stresses the role of the multiplicative group $\ker(sgn)$ and thus might provide some useful new insight. \end{rem}

\subsubsection{Bernoulli-Goss numbers}
Our final goal in this chapter is to interpolate the Carlitz-Goss zeta function at negative integers.
Lacking a functional equation, we have to use more brutal techniques in order to extend the domain of $\zeta_A$ to all
of $\bbS_\infty$.

For any $n\geq 1$ let $A_{+,n}:=\{a\in A_+\,:\,\deg(a)=n\}$ (note that $|A_{+,n}|=q^n$).
For any $j\in\Z$ and $n\in\N$ put
\[ S_n(j):=\sum_{a\in A_{+,n}}a^j \ .\]
Note that we have $S_0(j)=1$ for all $j\in\Z$.

\begin{lem}\label{Simon}
If $1\leq j<q^n-1$, then $S_n(j)=0$.
\end{lem}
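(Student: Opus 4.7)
The plan is to extract $S_n(j)$ as coefficients of a Laurent series expansion around $X=\infty$ of a certain rational function, via partial fractions. The crucial observation is that $A_{+,n}=\theta^n+A_{<n}$ is an affine $\F$-subspace of $F$ (where $A_{<n}$ denotes polynomials of degree $<n$, an $\F$-vector space of dimension $n$), so the derivative of $P_+(X):=\prod_{a\in A_{+,n}}(X-a)$ takes a single value on $A_{+,n}$.

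Concretely, I would proceed as follows. First, fix $a_0\in A_{+,n}$; then
\[
P_+'(a_0)=\prod_{a\in A_{+,n},\,a\neq a_0}(a_0-a)=\prod_{c\in A_{<n}-\{0\}}c=:\omega,
\]
the last equality by the substitution $c=a_0-a$ and the translation-invariance of $A_{<n}-\{0\}$. In particular, $\omega\in F^*$ is independent of $a_0$. Since the roots of $P_+$ are simple, the partial fraction decomposition reads
\[
\frac{1}{P_+(X)}=\frac{1}{\omega}\sum_{a\in A_{+,n}}\frac{1}{X-a}.
\]
Now expand each summand as a Laurent series in $X^{-1}$:
\[
\frac{1}{X-a}=\sum_{k\geq 0}a^k\,X^{-k-1},
\]
so that, switching the order of summation,
\[
\frac{1}{P_+(X)}=\frac{1}{\omega}\sum_{k\geq 0}S_n(k)\,X^{-k-1}.
\]
On the other hand, $P_+(X)=X^{q^n}+(\text{lower terms in }X)$ since $|A_{+,n}|=q^n$, so $1/P_+(X)$ as a Laurent series in $X^{-1}$ starts at $X^{-q^n}$. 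Matching coefficients of $X^{-k-1}$ for $k+1<q^n$ forces $S_n(k)=0$ for $0\leq k<q^n-1$, which yields the lemma (and as a bonus, $S_n(q^n-1)=\omega$).

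The main (small) obstacle is verifying that $P_+'$ is constant on $A_{+,n}$, which boils down to the translation symmetry of the affine $\F$-space $A_{+,n}$: although $A_{+,n}$ is not itself an $\F$-subspace, the differences $a_0-a$ range over the \emph{vector space} $A_{<n}$ as $a$ varies, so the product in the definition of $P_+'(a_0)$ does not depend on the chosen base point $a_0$. Everything else is routine manipulation of formal Laurent series in $X^{-1}$, carried out over the field $F$.
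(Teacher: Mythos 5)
Your argument is correct, and it is self-contained, which is more than the paper offers: the paper simply cites Goss's book for this lemma without reproducing any argument. So rather than comparing approaches, let me confirm the details. The key point that $A_{+,n}=a_0+A_{<n}$ for any $a_0\in A_{+,n}$ makes the derivative $P_+'(a_0)=\prod_{0\neq c\in A_{<n}}c=\omega$ independent of $a_0$, and this gives the clean partial-fraction identity
\[
\frac{1}{P_+(X)}=\frac{1}{\omega}\sum_{a\in A_{+,n}}\frac{1}{X-a}=\frac{1}{\omega}\sum_{k\geq 0}S_n(k)X^{-k-1}
\]
in $F((X^{-1}))$. Since $P_+$ is monic of degree $q^n$, the left side lies in $X^{-q^n}+X^{-q^n-1}F[[X^{-1}]]$, so the coefficient of $X^{-k-1}$ vanishes for $k\leq q^n-2$; matching coefficients gives $S_n(k)=0$ for $0\leq k\leq q^n-2$, which contains the stated range $1\leq j<q^n-1$ (and also recovers $S_n(0)=q^n=0$ in characteristic $p$ for $n\geq 1$, and $S_n(q^n-1)=\omega$). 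It is worth noting that this approach is genuinely stronger than the naive degree-count one gets by writing $a=\theta^n+a_{n-1}\theta^{n-1}+\cdots+a_0$ and using $\sum_{c\in\F}c^m=0$ for $0\leq m<q-1$: that only yields vanishing for $j<n(q-1)$, whereas the Laurent-series argument reaches the full (and sharp) bound $j<q^n-1$. One tiny stylistic point: the phrase ``translation-invariance of $A_{<n}-\{0\}$'' is a slightly loose way of saying that $a\mapsto a_0-a$ is a bijection from $A_{+,n}\setminus\{a_0\}$ onto $A_{<n}\setminus\{0\}$, which is the precise statement you are using.
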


\begin{proof}
See \cite[Remark 8.12.1.\,1]{Goss}.
\end{proof}

Reorganizing the terms in \eqref{eqzetgoss} we can also write the Carlitz-Goss $\zeta$-function as
\[ \zeta_A(x,y):= \sum_{n\geq 0}\left( \sum_{a\in A_{+,n}}
\langle a\rangle_{\infty}^{-y}\right)\, x^{-n} \quad ,\ (x,y)\in \bbS_{\infty} \ .\]
This second formula guarantees the convergence for all $s\in \bbS_{\infty}$ because of the following

\begin{lem} \label{ConvZeta}
For any $y\in \Z_p$ and any $n\geq 1$, one has
\[ v_{\infty}\left( \sum_{a\in A_{+,n}} \langle a\rangle_{\infty}^y\right )\geq p^{n-1} \ . \]
\end{lem}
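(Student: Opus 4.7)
The plan is to expand each $\langle a\rangle_\infty^y$ as a power series in $\alpha_a:=\langle a\rangle_\infty-1$, swap it with the finite sum over $a\in A_{+,n}$, and show that the ``$k$-th moment'' $\sum_{a}\alpha_a^k$ vanishes for every $k<q^n-1$.

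First I would observe that for $a\in A_{+,n}$ (monic of degree $n$) equation \eqref{eqsgninft} gives $\langle a\rangle_\infty=a\theta^{-n}$, so
$$\alpha_a=a\theta^{-n}-1\in\theta^{-1}\F[[\theta^{-1}]],\qquad v_\infty(\alpha_a)\geq 1.$$
By the definition of $u^y$ recalled in the paper,
$$\langle a\rangle_\infty^y=\sum_{k\geq 0}\binom{y}{k}\alpha_a^k,$$
where $\binom{y}{k}$ is the mod-$p$ reduction of the usual $p$-adic binomial, hence an element of $\F_p\subset F_\infty$ with $v_\infty\bigl(\binom{y}{k}\bigr)\geq 0$. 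Since $v_\infty(\alpha_a^k)\geq k$, the series converges in $F_\infty$, and the outer sum over $A_{+,n}$ being finite lets me interchange:
$$\sum_{a\in A_{+,n}}\langle a\rangle_\infty^y=\sum_{k\geq 0}\binom{y}{k}\sum_{a\in A_{+,n}}\alpha_a^k.$$

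Next I would expand the inner sum via the ordinary binomial theorem. Since $\alpha_a^k=(a\theta^{-n}-1)^k=\sum_{j=0}^k\binom{k}{j}(-1)^{k-j}\theta^{-nj}a^j$,
$$\sum_{a\in A_{+,n}}\alpha_a^k=\sum_{j=0}^{k}\binom{k}{j}(-1)^{k-j}\theta^{-nj}\,S_n(j).$$
The $j=0$ term equals $(-1)^k S_n(0)=(-1)^k q^n$, which vanishes in $F_\infty$ because $F_\infty$ has characteristic $p$ and $q=p^e$. For $1\leq j<q^n-1$, Lemma \ref{Simon} gives $S_n(j)=0$. Hence whenever $k<q^n-1$ every term of the inner sum is zero, and
$$\sum_{a\in A_{+,n}}\alpha_a^k=0\qquad\text{for all }0\leq k<q^n-1.$$

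Finally, for $k\geq q^n-1$ the bound $v_\infty(\alpha_a^k)\geq k$ is inherited term by term by the finite sum, so $v_\infty\bigl(\sum_a\alpha_a^k\bigr)\geq k\geq q^n-1$. Combined with $v_\infty\bigl(\binom{y}{k}\bigr)\geq 0$ and with the vanishing of the lower-index contributions, this yields
$$v_\infty\!\left(\sum_{a\in A_{+,n}}\langle a\rangle_\infty^y\right)\geq q^n-1\geq p^{n-1},$$
the last inequality following from $q=p^e\geq p$ and $p^n-1\geq p^{n-1}$ for $n\geq 1$.

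I do not anticipate any serious obstacle; the only point worth care is that the vanishing of the $j=0$ term and that of the $j\geq 1$ terms come from genuinely different inputs (characteristic $p$ kills $|A_{+,n}|=q^n$, whereas Lemma \ref{Simon} kills the positive power sums), and both must be used to obtain the full cancellation on the range $k<q^n-1$.
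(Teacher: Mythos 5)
Your proof is correct, and it takes a genuinely different route from the paper's. The paper splits into two cases according to whether $p^n\mid y$: when $p^n\mid y$ it uses the Frobenius (freshman's dream) directly, and when $p^n\nmid y$ it replaces $y$ by an integer $y_{n-1}\equiv y\pmod{p^{n-1}}$ with $1\leq y_{n-1}\leq p^{n-1}-1$, invokes Lemma \ref{Simon} once for that single exponent, and then bounds $\langle a\rangle_\infty^y-\langle a\rangle_\infty^{y_{n-1}}$ via the $p^{n-1}$-th power map. You instead expand $\langle a\rangle_\infty^y$ fully in the Mahler/binomial series, interchange the finite sum over $a$, and apply Lemma \ref{Simon} to \emph{every} exponent $j$ with $1\leq j<q^n-1$ simultaneously, together with the observation that $|A_{+,n}|=q^n\equiv 0$ handles $j=0$. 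Your approach is cleaner in that it avoids the case split and the auxiliary integer $y_{n-1}$, and it yields the sharper lower bound $q^n-1$ (the paper's method only gives $p^{n-1}$); both are of course enough for the intended application, the convergence of $\zeta_A(x,y)$ for all $(x,y)\in\bbS_\infty$. One small remark worth noting, though not a gap: your argument uses that the binomial reductions $\binom{y}{k}\in\F_p$ have $v_\infty\geq 0$, which is exactly the content of the paper's definition of $u^y$ and is fine, but it is the point where the $p$-adic input on $y$ enters, replacing the paper's more explicit congruence manipulation.
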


\begin{proof} For $n=1$ there is nothing to prove, so fix $n\geq 2$. If $y=p^ny'$, we can write
$\langle a\rangle_{\infty}^{y'}=1+\tilde a$ where $v_\infty(\tilde a)\geq1$. The claim in this case is then obvious by
\[  \sum_{a\in A_{+,n}}\langle a\rangle_\infty^y= \sum_{a\in A_{+,n}}\langle a\rangle_\infty^{y'p^n}=
\sum_{a\in A_{+,n}}(1+\tilde a)^{p^n} = \sum_{a\in A_{+,n}}(1+\tilde a^{p^n})= \sum_{a\in A_{+,n}}\tilde a^{p^n} \]
(since we are in characteristic $p$ and $|A_{n,+}|=q^n$).

\noindent If $y\not\equiv 0\pmod{p^n}$, then take an integer $y_{n-1}\equiv y\pmod{p^{n-1}}$ with $1\leq y_{n-1} \leq p^{n-1}-1$.
Since $q\geq p$, we get $y_{n-1}<q^n-1$ and Lemma \ref{Simon} implies $S_n(y_{n-1})=0$. Therefore
\[ \sum_{a\in A_{+,n}} \langle a\rangle_\infty^{y_{n-1}} = \sum_{a\in A_{+,n}} \left(\frac{a}{\theta^n}\right)^{y_{n-1}}
=\frac{1}{\theta^{ny_{n-1}}} S_n(y_{n-1}) = 0\ .\]
Moreover
\[ \langle a\rangle_\infty^y-\langle a\rangle_\infty^{y_{n-1}} =
\langle a\rangle_\infty^{y_{n-1}}\left( \langle a\rangle_\infty^{p^{n-1}y'} -1 \right)=
\langle a\rangle_\infty^{y_{n-1}}\left( \langle a\rangle_\infty^{y'} -1 \right)^{p^{n-1}}
=\langle a\rangle_\infty^{y_{n-1}} c^{p^{n-1}} \]
(where $v_\infty(c)\geq 1$), so that
\[ v_\infty(\langle a\rangle_\infty^y-\langle a\rangle_\infty^{y_{n-1}}) \geq p^{n-1} \ .\]
Hence
\[ v_{\infty}\left( \sum_{a\in A_{+,n}} \langle a\rangle_{\infty}^y\right ) =
v_{\infty}\left( \sum_{a\in A_{+,n}} \langle a\rangle_{\infty}^y - \langle a\rangle_\infty^{y_{n-1}} \right)
\geq p^{n-1} \ .  \qedhere\]
\end{proof}

For any $j\in\N$ and $x\in \C_\infty^*$, we have the equality
\begin{equation}\label{Zetaj}
\zeta_A\left(\frac{x}{\theta^j},-j\right)=\sum_{a\in A_+} x^{-\deg(a)}\theta^{j\deg(a)}
\left(\frac{a}{\theta^{\deg(a)}}\right)^j = \sum_{n\geq 0} S_n(j)x^{-n} \ ,
\end{equation}
which leads to the following

\begin{defin}\label{DefZXjBGPol}\
\begin{enumerate}
\item For any integer $j\geq 0$ we put
\begin{equation}\label{eqZetaxy} Z(X,j):=\sum_{n\geq 0} S_n(j)X^n\in A[X] \end{equation}
(it is a polynomial because of Lemma \ref{Simon}).
\item For any $j\in\N$, the {\em Bernoulli-Goss numbers} $\beta(j)$ are defined as
\[ \beta(j):=\left\{\begin{array}{cl} Z(1,j) & \textrm{if} \ j=0\ \textrm{or}\ j\not\equiv0\pmod{q-1} \\
\ & \\
-\frac{d}{dX}Z(X,j)|_{X=1}& \textrm{if}\ j\geq 1\ \textrm{and}\ j\equiv 0\pmod{q-1} \end{array}\right. \ .\]
\end{enumerate}
\end{defin}

\noindent By definition, for any $j\in\N$, we have
\[ \zeta_A(-j)=Z(1,j) \]
(by an abuse of notation, we write $\zeta(-j)$ for $\zeta(c_{-j})$). It is known that, for $j\geq 1$ with
$j\equiv 0 \pmod{q-1}$, we have $Z(1,j)=0$, which corresponds to a trivial zero in this setting (see \cite[Example 8.13.6]{Goss}).
Moreover it is clear that $\beta(j)\in A$ and $\beta(j)=1$ for $0\leq j\leq q-2$. We also have $\beta(q-1)=1$, as can be
deduced from the following lemma.

\begin{lem}\label{BetaCongrTheta}
For all $j\geq 0$, we have $\beta(j)\equiv 1 \!\pmod{\theta^q-\theta}$. In particular $\beta(j)\neq 0$.
\end{lem}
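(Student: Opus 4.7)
The natural strategy is to use the Chinese Remainder Theorem decomposition
\[
A/(\theta^q-\theta)\;\simeq\;\prod_{c\in\F_q}\F_q \qquad,\qquad a\mapsto (a(c))_{c\in\F_q},
\]
(since $\theta^q-\theta=\prod_{c\in\F_q}(\theta-c)$ splits into distinct linear factors). So $\beta(j)\equiv 1\pmod{\theta^q-\theta}$ is equivalent to showing $\beta(j)(c)=1$ in $\F_q$ for every $c\in\F_q$. The claim $\beta(j)\neq 0$ is then an automatic consequence, because $\beta(j)=0$ would force $-1\in(\theta^q-\theta)$.

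Fix $c\in\F_q$. The heart of the argument is computing the evaluations $S_n(j)(c)$. For $n\geq1$, a monic polynomial of degree $n$ has the form $\theta^n+\sum_{i=0}^{n-1}b_i\theta^i$ with $b_i\in\F_q$, and the map
\[
A_{+,n}\longrightarrow\F_q \qquad,\qquad a\mapsto a(c)=c^n+\sum_{i=0}^{n-1}b_ic^i
\]
is an affine map whose linear part has $b_0$-coefficient equal to $1$, hence is surjective with all fibres of size $q^{n-1}$. Consequently
\[
S_n(j)(c)\;=\;q^{n-1}\sum_{b\in\F_q}b^{j} \qquad (n\geq1),
\]
while $S_0(j)(c)=1$. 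Since $q$ is a power of $p$, we have $q^{n-1}\equiv 0\pmod p$ for all $n\geq 2$, so only the terms $n=0$ and $n=1$ survive modulo $p$ (equivalently, in $\F_q$).

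Recalling the classical formula $\sum_{b\in\F_q}b^{j}$ equals $-1$ when $j\geq 1$ and $(q-1)\mid j$, and equals $0$ in the remaining cases (including $j=0$, where the sum is $q=0$), we get
\[
S_0(j)(c)=1,\qquad S_1(j)(c)=\begin{cases}-1 & \text{if }j\geq 1\text{ and }(q-1)\mid j,\\ 0 & \text{otherwise},\end{cases}\qquad S_n(j)(c)=0\;(n\geq 2).
\]
Plugging into Definition \ref{DefZXjBGPol}: in the odd case (or $j=0$), $\beta(j)(c)=Z(1,j)(c)=S_0(j)(c)+S_1(j)(c)=1+0=1$; in the even case ($j\geq 1$, $(q-1)\mid j$), $\beta(j)(c)=-\frac{d}{dX}Z(X,j)\bigl|_{X=1}(c)=-\sum_{n\geq 1}n\,S_n(j)(c)=-(1\cdot(-1))=1$. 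Either way, $\beta(j)(c)=1$ for every $c\in\F_q$, which is the claim.

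There is no real obstacle here: the only subtlety is bookkeeping the two cases in the definition of $\beta(j)$ and making sure that the even case ($j=0$ treated separately, since $S_1(0)(c)=q=0$, not $-1$) is correctly handled by the standard power-sum formula over $\F_q$.
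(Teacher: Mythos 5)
Your proof is correct and is essentially the same as the paper's: the paper reduces $S_n(j)$ modulo $\theta-\alpha$ for each $\alpha\in\F$ (which is exactly evaluation at $\alpha$), obtains $S_n(j)\equiv q^{n-1}\sum_{a_0\in\F}a_0^j$, and then combines the congruences via $\theta^q-\theta=\prod_{\alpha\in\F}(\theta-\alpha)$ -- precisely your CRT argument. Your bookkeeping of the $j=0$, odd, and even cases and the resulting value $\beta(j)(c)=1$ matches the paper's computation of $Z(X,j)\bmod(\theta-\alpha)$.
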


\begin{proof}
Recall that $\beta(0)=1$ and $S_0(j)=1$ for any $j\geq 0$. For any $\alpha\in\F$ we can write a polynomial $a\in A_{+,n}$
in terms of powers of $\theta-\alpha$, i.e., $a=a_0+a_1(\theta-\alpha)+\cdots+(\theta-\alpha)^n$. Therefore, for any $j\geq 1$,
\[ S_n(j)=\sum_{a\in A_{+,n}}a^j\equiv q^{n-1}\sum_{a_0\in\F} a_0^j \pmod{\theta-\alpha} \ .\]
Thus $S_n(j)\equiv 0\pmod{\theta-\alpha}$ for any $n\geq 2$ or for $n=1$ and
$j\not\equiv 0\pmod{q-1}$. Moreover for $n=1$ and $j\equiv 0\pmod{q-1}$ one has
\[ S_1(j)\equiv \sum_{a_0\in\F} 1 \equiv -1 \pmod{\theta-\alpha} \ .\]
Hence
\[ Z(X,j) \equiv \left\{\begin{array}{cl}
S_0(j)-X\equiv 1-X \!\!\pmod{\theta-\alpha}& {\rm if}\ j\geq 1 \ {\rm and} \ j\equiv 0\!\!\pmod{q-1}\\
\ & \\
S_0(j)\equiv 1 \!\!\pmod{\theta-\alpha} & {\rm otherwise}
\end{array} \right. \ .\]
The lemma follows by the definition of $\beta(j)$ (recalling that the terms $\theta-\alpha$ are relatively
prime and their product is $\theta^q-\theta$).
\end{proof}

\subsection{$\pr$-adic $L$-function and interpolation}\label{SecprLfun}
The previous section dealt with the prime at infinity, now we focus on the other place in $S$. We give here
the details of the construction of Goss's $\pr$-adic $L$-function (see \cite{Goss} for more).

\subsubsection{The group $\bbS_\pr$} 
Similarly to $\bbS_\infty$, we define a group of $\C_\pr$-valued quasi-characters on $\I_F/F^*$ by
\[ \bbS_\pr:=\C_\pr^*\times\Z_p\times\Z/|\F_\pr^*| \ .\]
However, in this case we shall be interested only in characters factoring through the compact group $A_\pr^*$.
So we embed $\Z$ into $\bbS_\pr$ by $j\mapsto(1,j,j)$. (Note that the image of this map is dense in
$\{1\}\times\Z_p\times\Z/(q^d-1)$, in contrast with the discrete embedding $\Z\hookrightarrow\bbS_\infty$\,.
This should be compared with the fact that $\Z$ is discrete in $\C$, but not in the $p$-adics.)
\footnote{This definition - the same as in \cite[\S5.5(b)]{thakur} - differs from the one in \cite{Goss},
where the factor $\C_\pr^*$ is missing. We decided to insert this factor in order to emphasize the
symmetry with $\bbS_\infty$.}

For $s=(1,y,i)\in \bbS_\pr$ and $a\in A_\pr^*$, the decomposition \eqref{EqOmp} suggests to define
\[ a^s:=\omega_\pr^i(a)\langle a\rangle_\pr^y \ . \]
Then we obtain a continuous homomorphism $\xi_s\colon G_S\rightarrow\C_\pr^*$ as composition of the maps
\begin{equation} \label{eqxis}
\begin{CD} G_S @>{\sigma\mapsto\sigma|_\calf}>> G_\infty @>{\kappa}>> A_\pr^* @>{a\mapsto a^s}>> \C_\pr^* \ . \end{CD}
\end{equation}

\subsubsection{The $\pr$-adic $L$-function}
As with $\zeta_A$, we first define a function by a certain power series and then interpret it as specialization
of the Stickelberger series.

\begin{defin}\label{DefpLFunc}
For any $0\leq i\leq q^d-2$ and any $y\in\Z_p$, we define the {\em $\pr$-adic $L$-function} as
\begin{equation}\label{eqpadicLfun} L_\pr(X,y,\omega_\pr^i):=
\sum_{n\geq0}\left(\sum_{a\in A_{+,n}-\pr} \omega_\pr^i(a)\langle a\rangle_\pr^y \right) X^n \ . \end{equation}
\end{defin}

\noindent Note that $L_\pr(X,y,\omega_\pr^i)$ is a element of $A_{\pr}[[X]]$: as such, it converges on the
open unit disc of $\C_\pr$. We can think of it as a function defined on $\bbS_\pr^+:=\{(x,y,i)\in\bbS_\pr:|x|<1\}$.

\begin{thm} \label{thmtetpr} We have
\begin{equation} \label{eqtetpr}
\xi_{-s}(\Theta_{\calf_S/F,S})(X)=L_\pr(X,y,\omega_\pr^i) \end{equation}
for every $s=(y,i)\in\Z_p\times\Z/(q^d-1)$.
\end{thm}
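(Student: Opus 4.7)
\emph{Plan of proof.} The strategy is to expand both sides as Euler products and match them term by term, paralleling the argument for Theorem~\ref{thmtetzet}. First I would apply the continuous ring homomorphism $\xi_{-s}\colon \mathbb{Z}_p[[G_S]]\langle X\rangle \to \mathbb{C}_\pr\langle X\rangle$ (whose existence is guaranteed by Proposition~\ref{prtatalg}, since $\C_\pr$ is a complete topological $\Z_p$-algebra with a non-archimedean absolute value) to the defining product \eqref{Sticel}, obtaining
\[
\xi_{-s}(\Theta_{\calf_S/F,S})(X)=\prod_{\gotq\in\mathscr{P}_F-S}\!\bigl(1-\xi_{-s}(\Fr_\gotq^{-1})X^{\deg\gotq}\bigr)^{-1}.
\]
On the other hand, since every $a\in A_+$ coprime to $\pr$ factors uniquely as $\prod_{\gotq}\pi_\gotq^{e_\gotq}$ (over places $\gotq\neq\pr,\infty$), and since both $\omega_\pr^i$ and $\langle\cdot\rangle_\pr^y$ are multiplicative on $A_\pr^*$, the definition~\eqref{eqpadicLfun} can be rewritten as the Euler product
\[
L_\pr(X,y,\omega_\pr^i)=\prod_{\gotq\in\mathscr{P}_F-S}\!\bigl(1-\omega_\pr^i(\pi_\gotq)\langle\pi_\gotq\rangle_\pr^{y}X^{\deg\gotq}\bigr)^{-1}.
\]
So it suffices to prove the local identity $\xi_{-s}(\Fr_\gotq^{-1})=\omega_\pr^i(\pi_\gotq)\langle\pi_\gotq\rangle_\pr^{y}$ for every unramified $\gotq$.

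Second, I would compute $\kappa(\Fr_\gotq)\in A_\pr^*$. The quickest way is the basic property of the $\pr$-cyclotomic extension recalled after \eqref{eqkappa}: via the isomorphism $\Gal(F_n/F)\simeq(A/\pr^{n+1})^*$, the arithmetic Frobenius at a place $\gotq\notin S$ corresponds to the residue class of $\pi_\gotq$, so passing to the inverse limit gives $\kappa(\Fr_\gotq)=\pi_\gotq$ in $A_\pr^*$. (As a sanity check, one can instead argue idelically exactly as in the proof of Theorem~\ref{thmtetzet}: writing $\Fr_\gotq=rec(i_\gotq)=rec(i_\gotq\pi_\gotq^{-1})$, one reads off the $\pr$-component of $i_\gotq\pi_\gotq^{-1}$, checking sign conventions against the $\infty$-component calculation $\rho(\Fr_\gotq)=\langle\pi_\gotq^{-1}\rangle_\infty$ already carried out there.)

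Third, I would combine these ingredients. By definition $\xi_{-s}(\sigma)=\kappa(\sigma)^{(1,-y,-i)}=\omega_\pr^{-i}(\kappa(\sigma))\langle\kappa(\sigma)\rangle_\pr^{-y}$, hence
\[
\xi_{-s}(\Fr_\gotq^{-1})=\omega_\pr^{-i}(\pi_\gotq^{-1})\langle\pi_\gotq^{-1}\rangle_\pr^{-y}=\omega_\pr^{i}(\pi_\gotq)\langle\pi_\gotq\rangle_\pr^{y},
\]
using that $\omega_\pr$ and $\langle\cdot\rangle_\pr$ are group homomorphisms, together with the decomposition $\pi_\gotq=\omega_\pr(\pi_\gotq)\langle\pi_\gotq\rangle_\pr$ of~\eqref{EqOmp}. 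Substituting this into the Euler product for $\xi_{-s}(\Theta_{\calf_S/F,S})(X)$ yields exactly the Euler product for $L_\pr(X,y,\omega_\pr^i)$, which proves~\eqref{eqtetpr}.

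The genuinely delicate step is the second one, namely pinning down the sign in $\kappa(\Fr_\gotq)=\pi_\gotq$ rather than $\pi_\gotq^{-1}$: the two reasonable definitions of the cyclotomic character (via the Galois action on torsion vs.~via the reciprocity map at $\pr$) differ by an inversion, and the conversion interacts nontrivially with the $-s$ in $\xi_{-s}$. The consistency check with the infinite-place computation of Theorem~\ref{thmtetzet} (where the analogous factor $\langle\pi_\gotq^{-1}\rangle_\infty$ appears) is what guarantees that the two sides eventually line up; everything else is formal manipulation of the Euler product.
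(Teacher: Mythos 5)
Your proof follows essentially the same strategy as the paper's: rewrite both sides as Euler products over $\gotq\in\mathscr{P}_F-S$ and match the local factors by showing $\kappa(\Fr_\gotq)=\pi_\gotq$, which combined with \eqref{EqOmp} gives $\xi_{-s}(\Fr_\gotq^{-1})=\omega_\pr^i(\pi_\gotq)\langle\pi_\gotq\rangle_\pr^y$. One small imprecision: the key identity $\kappa(\Fr_\gotq)=\pi_\gotq$ is not actually ``recalled after \eqref{eqkappa}'' in the paper --- it is established inside the proof of Theorem~\ref{thmtetpr} itself via the Carlitz--Fermat congruence $\Phi_{\pi_\gotq}(\varepsilon)\equiv\varepsilon^{q^{\deg\gotq}}\pmod{\gotq}$ on $\Phi[\pr^\infty]$, which is a cleaner route than the idelic sign-chasing you offer as backup and avoids exactly the normalization ambiguity you (rightly) flag as the delicate point.
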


\begin{proof}
Equation \eqref{eqpadicLfun} can be rewritten as an Euler product
\begin{equation}\label{EulProdLFunc}
L_{\pr}(X,y,\omega_\pr^i)=
\prod_{\gotq\in\mathscr{P}_F-S}\big(1-\omega_\pr^i(\pi_\gotq)\langle\pi_\gotq\rangle_\pr^y X^{\deg(\gotq)}\big)^{-1} \ .
\end{equation}
Thus, as in the proof of Theorem \ref{thmtetzet}, we just need to check that the equality
\[ \xi_{-s}(\Fr_\gotq^{-1}) =
\omega_\pr^i(\pi_\gotq)\langle\pi_\gotq\rangle_\pr^y=\pi_\gotq^s \]
holds for every $s$ and $\gotq$. An element in $G_\infty$ is completely determined by its action on $\Phi[\pr^\infty]$;
since $\Phi_{\pi_\gotq}(x)\in A[x]$ is monic and it satisfies
\[ \Phi_{\pi_\gotq}(\varepsilon)\equiv \varepsilon^{\deg(\gotq)}\pmod\gotq \]
for every $\varepsilon\in\Phi[\pr^\infty]$, we get $\Phi_{\pi_\gotq}(\varepsilon)=\Fr_\gotq(\varepsilon)$.
Then \eqref{eqkappa} implies that the restriction of $\Fr_\gotq$ to $\calf$ is exactly $\kappa^{-1}(\pi_\gotq)$.
\end{proof}

Theorem \ref{thmtetpr} implies that the series $L_\pr(X,y,\omega_\pr^i)$ converges on the closed unit disc.
Actually, one can show that \eqref{eqpadicLfun} defines an entire function on $\C_\pr$, by a reasoning similar
to the one of Lemma \ref{ConvZeta}. Since we are only interested in the specialization at $X=1$, we won't discuss
the matter any further (see \cite[Chapter 8]{Goss} for more).

\begin{cor}\label{cor2padicLfun} Let $j$ be a natural number congruent to $i\pmod{q^d-1}$. Then
\begin{equation} \label{eqLprZ} L_{\pr}(X,j,\omega_\pr^i)=(1-\pi_\pr^jX^d)Z(X,j)\in A[X] \end{equation}
and, for any $y\in\Z_p$, we have
\begin{equation} \label{eqLprZ2} L_{\pr}(X,y,\omega_\pr^i)\equiv Z(X,i) \pmod{\pr} \ .\end{equation}
\end{cor}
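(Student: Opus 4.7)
The plan is to deduce both parts from the Euler product formula \eqref{EulProdLFunc} for $L_\pr(X, y, \omega_\pr^i)$ established in Theorem \ref{thmtetpr}, combined with the Euler product expansion of $Z(X, j)$ obtained from unique factorization in $A$.

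First I would prove \eqref{eqLprZ}. Specializing $y = j$ in \eqref{EulProdLFunc}, the hypothesis $j \equiv i \pmod{q^d - 1}$ together with the fact that $\omega_\pr$ takes values in the cyclic group $\F_\pr^*$ of order $q^d - 1$ gives $\omega_\pr^i(\pi_\gotq) = \omega_\pr^j(\pi_\gotq)$. For $\gotq \in \mathscr{P}_F - S$, the Teichm\"uller decomposition $\pi_\gotq = \omega_\pr(\pi_\gotq) \langle \pi_\gotq \rangle_\pr$ then yields
\[ \omega_\pr^i(\pi_\gotq)\, \langle \pi_\gotq \rangle_\pr^j = \bigl(\omega_\pr(\pi_\gotq)\, \langle \pi_\gotq \rangle_\pr\bigr)^j = \pi_\gotq^j, \]
so $L_\pr(X, j, \omega_\pr^i) = \prod_{\gotq \in \mathscr{P}_F - S} (1 - \pi_\gotq^j X^{\deg \gotq})^{-1}$. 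On the other hand, unique factorization in $A$ yields the formal power series identity
\[ Z(X, j) = \sum_{n} S_n(j)\, X^n = \sum_{a \in A_+} a^j\, X^{\deg a} = \prod_{\gotq \in \mathscr{P}_F - \{\infty\}} (1 - \pi_\gotq^j X^{\deg \gotq})^{-1}. \]
Multiplying by the missing Euler factor $(1 - \pi_\pr^j X^d)$ at $\pr$ produces \eqref{eqLprZ}; integrality $(1 - \pi_\pr^j X^d) Z(X, j) \in A[X]$ is immediate from $Z(X, j) \in A[X]$, itself a consequence of Definition \ref{DefZXjBGPol} and Lemma \ref{Simon}.

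For \eqref{eqLprZ2}, my plan is to reduce the defining series \eqref{eqpadicLfun} modulo $\pr$ coefficient by coefficient. Since $\langle a \rangle_\pr \in 1 + \pr A_\pr$, the binomial expansion shows $\langle a \rangle_\pr^y \equiv 1 \pmod \pr$ for every $y \in \Z_p$, and $\omega_\pr(a) \equiv a \pmod \pr$ by construction. Hence the $n$-th coefficient of $L_\pr(X, y, \omega_\pr^i)$ satisfies
\[ \sum_{a \in A_{+,n} - \pr} \omega_\pr^i(a)\, \langle a \rangle_\pr^y \equiv \sum_{a \in A_{+,n} - \pr} a^i \pmod \pr. \]
Assuming $i \geq 1$, the contributions from $a$ divisible by $\pr$ lie in $\pr^i \subseteq \pr$, so this sum agrees mod $\pr$ with $\sum_{a \in A_{+,n}} a^i = S_n(i) = [X^n] Z(X, i)$. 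A clean equivalent route is to apply \eqref{eqLprZ} at $j = i$: since $\pi_\pr \in \pr$, the Euler factor $(1 - \pi_\pr^i X^d)$ reduces to $1$ mod $\pr$ for $i \geq 1$, yielding $L_\pr(X, i, \omega_\pr^i) \equiv Z(X, i) \pmod \pr$, and the $y$-independence of the mod $\pr$ reduction noted above extends this to all $y \in \Z_p$.

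The only subtle point I foresee is the case $i = 0$: there the Euler factor $(1 - \pi_\pr^0 X^d) = 1 - X^d$ does not reduce to $1$ mod $\pr$, and the naive argument gives $L_\pr(X, y, 1) \equiv 1 - X^d \pmod \pr$ which disagrees with $Z(X, 0) = 1$. I would therefore read \eqref{eqLprZ2} under the implicit restriction $1 \leq i \leq q^d - 2$, consistent with the general theme (cf.\ Remark \ref{RemIntro1}) that the trivial character requires separate treatment. Apart from this caveat, the proof is essentially algebraic manipulation of Euler products plus a straightforward mod $\pr$ reduction, with no real obstacle.
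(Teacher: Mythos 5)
Your argument is correct, and your handling of \eqref{eqLprZ} takes a mildly different route from the paper's. The paper proves \eqref{eqLprZ} by going back through the Stickelberger side: it computes $\xi_{(-j,-i)}(\Fr_\gotq^{-1})=\pi_\gotq^j=\psi_j(\Fr_\gotq^{-1})\theta^j$ (the same key identity you use), then rewrites $L_\pr(X,j,\omega_\pr^i)$ as $\psi_j(\Theta_{\calf_S/F,S})(\theta^jX)$ and invokes Theorem \ref{thmtetzet} to convert this into $(1-\pi_\pr^jX^d)Z(X,j)$ via $\zeta_A$. You bypass that detour and compare the Euler product of \eqref{EulProdLFunc} directly with the Euler product expansion of $Z(X,j)$ coming from unique factorization in $A$; this is in the same spirit as the more direct route the paper itself offers in the Remark following the corollary, except you work with Euler products rather than with the coefficient identity $L_\pr(X,j,\omega_\pr^i)=\sum_n(S_n(j)-\pi_\pr^jS_{n-d}(j))X^n$. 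What the paper's longer route buys is making visible the uniform character-interpolation philosophy tying $\Theta_{\calf_S/F,S}$, $\zeta_A$ and $L_\pr$ together; what your route buys is brevity and self-containment. For \eqref{eqLprZ2} your argument coincides with the paper's: $\langle a\rangle_\pr^y\equiv 1\pmod\pr$ makes $y$ irrelevant mod $\pr$, and then one specializes \eqref{eqLprZ} at $j=i$.

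Your caveat about $i=0$ is genuine and well spotted: for $i=0$ one has $Z(X,0)=1$ (since $S_n(0)=q^n\equiv 0$ for $n\geq 1$), whereas \eqref{eqLprZ} at $j=0$ gives $L_\pr(X,y,\omega_\pr^0)\equiv(1-X^d)Z(X,0)=1-X^d\pmod\pr$, so the congruence \eqref{eqLprZ2} fails as stated. The paper's proof passes over this because the final step uses $\pi_\pr^i\equiv 0\pmod\pr$, which requires $i\geq 1$; and indeed the downstream applications (Theorem \ref{Sticnon0modp}, Corollary \ref{ArPropBGNum}) invoke the corollary only for $1\leq i\leq q^d-2$. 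You are right to read \eqref{eqLprZ2} with that implicit restriction.
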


\begin{proof} Since $j$ is an integer and $i$ is its reduction modulo $q^d-1$, we have
\[ \xi_{(-j,-i)}(\Fr_\gotq^{-1})
=\omega_\pr^i(\pi_\gotq)\langle\pi_\gotq\rangle_\pr^j=\pi_\gotq^j=
\langle\pi_\gotq\rangle_\infty^j\cdot\theta^j = \psi_j(\Fr_\gotq^{-1})\cdot\theta^j \]
for all places $\gotq\notin S$. Therefore Theorem \ref{thmtetpr} gives an equality of power series in $F[[X]]$
\[ L_{\pr}(X,j,\omega_\pr^i) = \xi_{(-j,-i)}(\Theta_{\calf_S/F,S})(X) =
\psi_j(\Theta_{\calf_S/F,S})(\theta^jX)\ .\]
It is convenient to extend the exponentiation in \eqref{eqsinfexp} by $a^{(xX,y)}:=\langle a\rangle_\infty^y(xX)^{-v_\infty(a)}$
(where $x,y$ are as in \eqref{eqsinfexp} and $X$ is a formal variable). Then Theorem \ref{thmtetzet} yields
\[ \psi_j(\Theta_{\calf_S/F,S})(\theta^jX)=
(1-\pi_\pr^{-(\theta^jX,j)})\cdot\zeta_A\left(\frac{1}{\theta^jX},-j\right)=(1-\pi_\pr^jX^d)\, Z(X,j) \]
(the first equality is just a restatement of \eqref{eqtetzet} in terms of Laurent series and the second one follows
from \eqref{Zetaj}).

\noindent As for \eqref{eqLprZ2}, it is enough to observe that one has $\langle a\rangle_\pr^y\equiv1\pmod{\pr}$ for any
$a\in A_\pr^*$ and $y\in\Z_p$. Hence \eqref{eqpadicLfun} shows that the variable $y$ is irrelevant modulo $\pr$
and \eqref{eqLprZ} yields
\[ L_{\pr}(X,y,\omega_\pr^i) \equiv L_\pr(X,i,\omega_\pr^i) \equiv Z(X,i) \pmod{\pr}\ .\qedhere\]
\end{proof}

\begin{rem}
A more direct proof of \eqref{eqLprZ} can be obtained from the equation
\[ L_{\pr}(X,j,\omega_\pr^i)=\sum_{n\geq 0}(S_n(j)-\pi_\pr^j S_{n-d}(j))X^n \ ,\]
which is obvious from \eqref{eqpadicLfun}.
However, the devious path we followed might be forgiven considering that it illustrates how \eqref{eqLprZ} and
\eqref{eqtetzet} are essentially the same statement.
\end{rem}

Regarding the special values of $L_\pr(X,i,\omega_\pr^i)$ we have the following

\begin{lem}\label{lem4padicLfun} If $i\equiv 0\pmod{q-1}$, then $L_{\pr}(1,y,\omega_\pr^i)=0$ for all $y\in \Z_p$. \end{lem}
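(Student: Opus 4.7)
The strategy is an interpolation-plus-density argument: prove the vanishing at a dense set of positive integer values of $y$ using Corollary \ref{cor2padicLfun}, and then extend to all of $\Z_p$ by continuity.

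First, I would fix any positive integer $j$ with $j \equiv i \pmod{q^d-1}$. Because $q-1$ divides both $q^d-1$ and $i$, any such $j$ automatically satisfies $j \equiv 0 \pmod{q-1}$; and since $j \geq 1$, the trivial zero recorded in the remark after Definition \ref{DefZXjBGPol} gives $Z(1, j) = 0$. Substituting $X = 1$ in \eqref{eqLprZ} of Corollary \ref{cor2padicLfun} then yields
\[ L_\pr(1, j, \omega_\pr^i) = (1 - \pi_\pr^j)\, Z(1, j) = 0 \]
for every such $j$.

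Next, I would check that $\Sigma := \{j \in \Z_{\geq 1} : j \equiv i \pmod{q^d-1}\}$ is dense in $\Z_p$. Since $q$ is a power of $p$, the integer $q^d - 1$ is coprime to $p$ and hence a unit in $\Z_p$. As $\Z_{\geq 1}$ is dense in $\Z_p$, and multiplication by the unit $q^d - 1$ together with translation by $i$ are homeomorphisms of $\Z_p$, the set $\Sigma$ is indeed dense.

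The third and hardest step is to show that $y \mapsto L_\pr(1, y, \omega_\pr^i)$ is continuous from $\Z_p$ to $A_\pr$. The cleanest approach combines Theorem \ref{thmtetpr} with Proposition \ref{prtatalg}: since $\Theta_{\calf_S/F, S}(X)$ lies in the Tate algebra $\Z_p[[G_S]]\langle X \rangle$, its evaluation at $X = 1$ converges in $\Z_p[[G_S]]$; the characters $\xi_{-(y,i)}$ vary continuously in $y$ (because $y \mapsto \langle u \rangle_\pr^y$ is continuous for every $u \in A_\pr^*$) and extend continuously to maps $\Z_p[[G_S]] \to \C_\pr$, so the value $\xi_{-(y,i)}\bigl(\Theta_{\calf_S/F, S}(1)\bigr) = L_\pr(1, y, \omega_\pr^i)$ depends continuously on $y$. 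A more hands-on alternative is to establish a $\pr$-adic analog of Lemma \ref{ConvZeta} for the coefficients $c_n(y)$ of \eqref{eqpadicLfun}, uniform in $y \in \Z_p$, which yields uniform convergence of $\sum_n c_n(y)$ and continuity of the limit. Once continuity is in place, the conclusion is immediate, since a continuous function on $\Z_p$ vanishing on the dense subset $\Sigma$ must vanish identically.
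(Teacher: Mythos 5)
Your proposal is correct and follows essentially the same strategy as the paper: establish vanishing at the dense set of positive integers $j\equiv i\pmod{q^d-1}$ via the trivial zero $Z(1,j)=0$ from Corollary \ref{cor2padicLfun}, then extend to all of $\Z_p$ by a continuity/interpolation argument. The paper in fact carries out what you call the hands-on alternative: choosing $j\equiv y\pmod{p^m}$ with $j\equiv i\pmod{q^d-1}$, it proves the explicit coefficient-wise congruence $L_\pr(X,y,\omega_\pr^i)\equiv L_\pr(X,j,\omega_\pr^i)\pmod{\pr^{p^m}}$ (using $\langle a\rangle_\pr^{y-j}\equiv 1\pmod{\pr^{p^m}}$, a sharp characteristic-$p$ estimate), specializes at $X=1$, and lets $m\to\infty$. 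Your primary Tate-algebra route also works, but to make the continuity of $y\mapsto\xi_{-(y,i)}\bigl(\Theta_{\calf_S/F,S}\bigr)(1)$ airtight one should note that modulo $\pr^{m+1}$ the character $\xi_{-(y,i)}$ factors through the finite quotient $\Gal(F_m/F)$, so the relevant image of the Stickelberger series is a polynomial of degree bounded independently of $y$; this is the uniformity you need before applying density.
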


\begin{proof} For any $y\in\Z_p$ and $m\geq 1$, take $j\in\N-\{0\}$ such that $j\equiv i\pmod{q^d-1}$ and $j\equiv y\pmod{p^m}$.
Then one has
\[ \langle a\rangle_\pr^y\equiv\langle a\rangle_\pr^{y-j}\langle a\rangle_\pr^j\equiv\langle a\rangle_\pr^j\pmod{\pr^{p^m}} \]
for any $a\in A_\pr^*$ and hence, by \eqref{eqpadicLfun} and \eqref{eqLprZ},
\begin{equation} \label{eqcongrprm}
L_\pr(X,y,\omega_\pr^i) \equiv  L_\pr(X,j,\omega_\pr^i) = (1-\pi_\pr^jX^d)Z(X,j) \pmod{\pr^{p^m}} \ .  \end{equation}
Now, since $j\geq 1$ and $j\equiv i\equiv 0\pmod{q-1}$, we have $Z(1,j)=0$ (see \cite[Example 8.13.6]{Goss}) and the lemma
follows taking the limit as $m$ goes to infinity.
\end{proof}

\noindent We also recall one of the main results of \cite{AnTa}.

\begin{thm}\cite[Theorem E]{AnTa}
Let $0\leq i\leq q^d-2$ with $i\not\equiv 0\pmod{q-1}$. Then
\[ L_{\pr}(1,-1,\omega_\pr^i)\neq 0 \ .\]
\end{thm}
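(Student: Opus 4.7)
The plan is to prove non-vanishing of $L_\pr(1,-1,\omega_\pr^i)$ for odd $\chi=\omega_\pr^i$ by interpreting it via an equivariant Taelman-type class number formula for the Carlitz module and then using Anderson--Thakur log-algebraicity. First, I would make the value concrete: using $\langle a\rangle_\pr = a/\omega_\pr(a)$, equation \eqref{eqpadicLfun} gives
\[ L_\pr(1,-1,\omega_\pr^i) \;=\; \sum_{a\in A_+,\,\gcd(a,\pr)=1}\frac{\omega_\pr^{i+1}(a)}{a}, \]
or equivalently, via Theorem \ref{thmtetpr}, the Euler product
\[ \prod_{\gotq\in\mathscr{P}_F-S}\bigl(1-\omega_\pr^{i+1}(\pi_\gotq)\pi_\gotq^{-1}\bigr)^{-1}. \]
This is a $\pr$-adic special value at $s=1$ of a Carlitz-Goss type $L$-function attached to $\omega_\pr^{i+1}$, lying in $F_\pr^\times$.

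Next, I would interpret this value as the $\chi$-isotypic component of a Taelman-style $\zeta$-value for the Carlitz module over $F_0=F(\Phi[\pr])$. Taelman's class number formula (in its equivariant form for abelian $\pr$-extensions) identifies such an $L$-value, up to an explicit $\pr$-adic unit coming from the local Euler factor at $\pr$, with the product
\[ L_\pr(1,-1,\omega_\pr^i) \;\doteq\; \mathrm{Reg}_\chi(U)\cdot [H(\chi)], \]
where $H(\chi)$ is the $\chi$-component of Taelman's class module of the Carlitz module for $F_0$, and $\mathrm{Reg}_\chi(U)$ is the regulator of Carlitz cyclotomic units (Anderson--Thakur special points) in the $\chi$-component. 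Both sides lie in $F_\pr^\times\cup\{0\}$ after suitable normalization, and finiteness of $H(\chi)$ is a standard consequence of Taelman's work.

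The crux of the proof, and where the hypothesis $i\not\equiv 0\pmod{q-1}$ is essential, is to show $\mathrm{Reg}_\chi(U)\neq 0$. Here I would use Anderson--Thakur log-algebraicity to produce an explicit Carlitz cyclotomic unit $u_\chi$ in the $\chi$-eigenspace whose Carlitz logarithm equals, up to a non-zero scalar, the $L$-value itself. For $\chi$ odd the element $u_\chi$ is a genuine non-trivial unit (it arises from an Anderson--Thakur polylog evaluated at a torsion point of $\Phi[\pr]$, and the odd character hypothesis ensures it is not killed by the trivial action of $\cali_\infty$); for $\chi$ even the analogous element would be forced to vanish, which is why the theorem excludes that case. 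Combining this non-triviality with injectivity of the $\pr$-adic Carlitz logarithm on the module of $1$-units of $F_0\otimes_F F_\pr$ yields $\mathrm{Reg}_\chi(U)\neq 0$, and hence $L_\pr(1,-1,\omega_\pr^i)\neq 0$. The hardest technical step is setting up the equivariant class number formula with the correct normalization so that it specializes precisely to the value $L_\pr(1,-1,\omega_\pr^i)$ as defined here; once that bridge is in place, the log-algebraic non-vanishing argument for odd $\chi$ is essentially a direct application of the Anderson--Thakur construction.
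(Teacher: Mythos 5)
The paper does not prove this statement: it is imported verbatim as \cite[Theorem~E]{AnTa}, and the body of the paper never supplies an argument for it. So there is no ``paper's own proof'' to check your proposal against; what you have written is, in effect, a reconstruction of the strategy of Angl\`es--Taelman. At that level your roadmap is consistent with the actual source: \cite{AnTa} does prove non-vanishing by relating the special $L$-value to a regulator of units via an equivariant class formula for the Carlitz module over $F_0$, and the units that make the regulator non-trivial come from Anderson--Thakur log-algebraicity. Your opening computation, namely that $\langle a\rangle_\pr^{-1}=\omega_\pr(a)/a$ gives
$L_\pr(1,-1,\omega_\pr^i)=\sum_{a\in A_+,\ (a,\pr)=1}\omega_\pr^{i+1}(a)/a$, is also correct and matches \eqref{eqpadicLfun}.

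However, as an actual proof the proposal has real gaps, all hidden in the phrase ``once that bridge is in place.'' First, Taelman's class number formula in its original form is an $\infty$-adic statement; what you need here is a $\pr$-adic (or $v$-adic for a finite place) analogue, identifying the value of the $\pr$-adic $L$-function at $1$ with a $\pr$-adic regulator and class term. Constructing that $\pr$-adic equivariant formula, with the precise normalization that makes it specialize to $L_\pr(1,-1,\omega_\pr^i)$ as defined in \eqref{eqpadicLfun}, is essentially the heart of \cite{AnTa}; it cannot be waved in as a known fact, and neither can finiteness of the class module $H(\chi)$ in the equivariant setting. Second, your explanation of why $i\not\equiv 0\pmod{q-1}$ is needed is slightly off: the even case is excluded because the $L$-value itself has a trivial zero there (Lemma~\ref{lem4padicLfun} shows $L_\pr(1,y,\omega_\pr^i)=0$ for even $i$), not primarily because some cyclotomic unit ``is forced to vanish''; the two phenomena are related by the formula but the cleaner statement is the trivial zero. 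Finally, injectivity of the $\pr$-adic Carlitz logarithm on the relevant module of $1$-units and the log-algebraic construction of the unit $u_\chi$ both need to be proved and carefully matched to the $\chi$-eigenspace of the Taelman unit module; in \cite{AnTa} this is a substantial technical effort. In short: as a guide to where the proof lives, your sketch is sound; as a self-contained proof it is far from complete, and as a comparison with this paper it is moot, since the paper simply cites the result.
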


\begin{rem} It would be interesting to investigate further the values of $L_{\pr}(1,y,\omega_\pr^i)$ for odd $i$.
From equation \eqref{eqLprZ2} one immediately has that
\[ Z(1,i) \not\equiv 0 \pmod{\pr} \Longrightarrow L_\pr(1,y,\omega_\pr^i) \neq 0\ \ \forall y\in\Z_p \ .\]
In general: is it true that for any $0\leq i\leq q^d-2$ with $i\not\equiv 0\pmod{q-1}$ and for any $y\in\Z_p$,
we have $L_\pr(1,y,\omega_\pr^i)\neq 0$ ?
\end{rem}

We end this section by providing another formula for $L_{\pr}(X,y,\omega_\pr^i)$. The Sinnott map $s$ of Theorem \ref{teosmap}
induces a map
\[ s_X\colon\Lambda[[X]]\rightarrow Dir(\Z_p,A_{\pr})[[X]] \]
in the obvious way, sending $\sum_nc_nX^n\in\Lambda[[X]]$ into the function $y\mapsto\sum_ns(\bar c_n)(y)X^n$
(where $\bar c_n$ is the reduction of $c_n$ modulo $p$).

\begin{thm}\label{StickLfun} For every $y\in\Z_p$ and $i\in\Z/(q^d-1)\Z$, we have
\begin{equation} \label{eqsinnthet}
s_X(\Theta_\infty(X,\wt\omega_\pr^{-i}))(y)=L_\pr(X,-y,\omega_\pr^i) \ .\end{equation}
\end{thm}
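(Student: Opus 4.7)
\medskip

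\noindent\textbf{Proof sketch.} The plan is to compare both sides as Euler products over $\gotq\in\mathscr{P}_F-S$. First I would trace a single Frobenius through the sequence of maps defining the left-hand side. Recall from the proof of Theorem \ref{thmtetpr} that $\Fr_\gotq$ restricted to $\calf$ satisfies $\kappa(\Fr_\gotq|_\calf)=\pi_\gotq$. Using the decomposition $\pi_\gotq=\omega_\pr(\pi_\gotq)\langle\pi_\gotq\rangle_\pr$ in $A_\pr^*=\F_\pr^*\times U_1$, and the corresponding decomposition $G_\infty=\Delta\times\G$ induced by $\kappa$, I can write $\Fr_\gotq|_\calf=\delta_\gotq\gamma_\gotq$ where $\delta_\gotq\in\Delta$ is determined by $\omega_\pr(\delta_\gotq)=\omega_\pr(\pi_\gotq)$ and $\gamma_\gotq\in\G$ is determined by $\kappa(\gamma_\gotq)=\langle\pi_\gotq\rangle_\pr$.

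Next, I apply the character $\chi=\wt\omega_\pr^{-i}\colon W[[G_\infty]]\to\Lambda$. Since $\chi(\delta\gamma)=\chi(\delta)\gamma$, one obtains
\[ \chi(\Fr_\gotq^{-1}|_\calf)=\wt\omega_\pr^{-i}(\delta_\gotq)^{-1}\gamma_\gotq^{-1}=\wt\omega_\pr^i(\delta_\gotq)\gamma_\gotq^{-1}\in\Lambda, \]
so that
\[ \Theta_\infty(X,\wt\omega_\pr^{-i})=\prod_{\gotq\notin S}\bigl(1-\wt\omega_\pr^i(\delta_\gotq)\gamma_\gotq^{-1}X^{\deg(\gotq)}\bigr)^{-1}\in\Lambda[[X]]. \]
Reducing modulo $p$ and using $\wt\omega_\pr\equiv\omega_\pr\pmod{p}$ together with $\omega_\pr(\delta_\gotq)=\omega_\pr(\pi_\gotq)$, the Euler product becomes, in $\F_\pr[[\G]][[X]]$,
\[ \Theta_\infty(X,\wt\omega_\pr^{-i})\equiv\prod_{\gotq\notin S}\bigl(1-\omega_\pr^i(\pi_\gotq)\gamma_\gotq^{-1}X^{\deg(\gotq)}\bigr)^{-1}\pmod{p}. \]

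Now I apply $s_X$ followed by evaluation at $y\in\Z_p$. Since $\mathrm{ev}_y\circ s\colon\Lambda/p\Lambda\to A_\pr$ is a continuous ring homomorphism sending $\gamma\mapsto\kappa(\gamma)^y$ (by Theorem \ref{teosmap}), the induced map on power series rings preserves Euler products; each local factor becomes
\[ \bigl(1-\omega_\pr^i(\pi_\gotq)\kappa(\gamma_\gotq)^{-y}X^{\deg(\gotq)}\bigr)^{-1}=\bigl(1-\omega_\pr^i(\pi_\gotq)\langle\pi_\gotq\rangle_\pr^{-y}X^{\deg(\gotq)}\bigr)^{-1}. \]
Comparing with the Euler product \eqref{EulProdLFunc} for $L_\pr(X,-y,\omega_\pr^i)$ gives the desired equality.

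The only genuine subtlety — and what I expect to be the main obstacle — is the justification that $\mathrm{ev}_y\circ s_X$ may be applied factor by factor to the infinite Euler product. This requires verifying continuity in the correct topology, which is available because $\Theta_\infty(X)$ lives in the Tate algebra (Proposition \ref{prtatalg}), $\chi$ extends to a continuous algebra map, the projection $\Lambda\twoheadrightarrow\Lambda/p\Lambda$ is continuous, and $s$ is constructed as the inverse limit $\varprojlim s_n$ (so its extension $s_X$ is continuous on the coefficient ring). Equivalently, one can argue at each finite level $F_n$ first (where $\Theta_n(X,\chi)$ is a polynomial, up to the harmless denominator $1-qX$ for the trivial character, which is not our case since we have $i\neq 0$ need not apply — all characters are fine because $s_n$ is well-defined) and then pass to the limit, invoking $\Theta_\infty=\varprojlim\Theta_n$ from \S\ref{sssStickcycl} and the compatibility of $s$ with $s_n$ noted in the diagram after Proposition \ref{snNotInj}.
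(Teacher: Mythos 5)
Your proposal is correct and reaches the same conclusion, but it is structured differently from the paper's. The paper observes that $\xi_{(y,-i)}$ \emph{factors} as a composition of continuous ring homomorphisms $\kappa^y\circ\alpha_{-i}$, where $\alpha_{-i}$ is the (mod-$p$ reduction of the) map defining $\Theta_\infty(X,\wt\omega_\pr^{-i})$ and $\kappa^y$ corresponds to $\mathrm{ev}_y\circ s$ by \eqref{eqintkapp}; once this factorization is available, the identity follows immediately from Theorem \ref{thmtetpr} with no further work — the authors quite literally call it ``an exercise in changing notations.'' You instead unwind that factorization into an explicit Euler-product computation: you trace $\Fr_\gotq|_\calf$ through $\kappa$, split it as $\delta_\gotq\gamma_\gotq$ using $\pi_\gotq=\omega_\pr(\pi_\gotq)\langle\pi_\gotq\rangle_\pr$, apply $\wt\omega_\pr^{-i}$, reduce mod $p$, apply $s_X$, and compare factor by factor with \eqref{EulProdLFunc}. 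Both calculations are the same comparison of Euler factors; the paper's abstraction buys it a free pass on the convergence issue you correctly identify as the only subtle point, since applying a continuous ring homomorphism to an element of the Tate algebra requires no factor-by-factor argument, while your version must justify term-by-term application of $\mathrm{ev}_y\circ s_X$ to an infinite product — which you do adequately, either via the Tate-algebra topology or by working at finite levels and passing to the limit.

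One small correction: the parenthetical ``which is not our case since we have $i\neq0$ need not apply'' is misplaced, since the theorem is stated for all $i\in\Z/(q^d-1)\Z$, including $i=0$. The reason the trivial character poses no obstruction is simpler than you suggest: the ``denominator'' is $1-qX$, and $q\equiv0\pmod p$, so $1-qX\equiv1$ in $\F_\pr[[\G]][[X]]$; since $s_X$ is applied only after reduction mod $p$, the apparent pole of $\Theta_n(X,\chi_0)$ has already vanished.
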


\begin{proof} This is just an exercise in changing notations. For $y\in\Z_p$, let $\kappa^y\colon\Gamma\rightarrow\C_\pr^*$
be the character $\gamma\mapsto\kappa(\gamma)^y$. Any such character can be extended, by linearity and continuity,
to a ring homomorphism $\kappa^y\colon\F_\pr[[\Gamma]]\rightarrow\C_\pr$, which is uniquely characterized by the following
property: if $\mu_\lambda$ denotes the measure on $\Gamma$ attached to $\lambda\in\F_\pr[[\Gamma]]$, then we have
\begin{equation}\label{eqintkapp}
\kappa^y(\lambda)=\int_\Gamma\kappa^y(\gamma)d\mu_\lambda=s(\lambda)(y) \end{equation}
(the last equality is the definition of $s$, as should be clear from the proof of Theorem \ref{teosmap}).

\noindent Let $\wt\alpha_i\colon \Z[[G_S]][[X]]\rightarrow\Lambda[[X]]$ be the homomorphism induced by composition of
$G_S\twoheadrightarrow G_\infty$ with the ring homomorphism $\wt\omega_\pr^i\colon W[[G_\infty]]\rightarrow\Lambda$
(as explained in Section \ref{sssStickcycl}). Definition \ref{DefSticFuncCyc} then becomes
$\Theta_\infty(X,\wt\omega_\pr^i)=\wt\alpha_i(\Theta_{\calf_S/F,S})(X)$. Moreover, letting $\alpha_i$ denote
the reduction of $\wt\alpha_i$ modulo $p$, \eqref{eqxis} yields the equality $\xi_{(y,i)}=\kappa^y\circ\alpha_i$.
For proving \eqref{eqsinnthet}, one just has to check
\[ L_\pr(X,-y,\omega_\pr^i)=\xi_{(y,-i)}(\Theta_{\calf_S/F,S})(X)=
\kappa^y\big(\alpha_{-i}(\Theta_{\calf_S/F,S})\big)(X)=s_X(\Theta_\infty(X,\wt\omega_\pr^{-i}))(y)\ .\]
The first equality is Theorem \ref{thmtetpr} and the last one is an easy consequence of \eqref{eqintkapp}.
\end{proof}

\section{Fitting ideals for Iwasawa modules}\label{SecFitt}
In this section we consider the algebraic aspect of the theory, i.e., Fitting ideals of Iwasawa modules associated
with the $\pr$-cyclotomic extension. Here the Stickelberger element will appear as a generator of Fitting ideals
of ($\chi$-parts of) class groups; the final link between the algebraic and the analytic side will be provided by
the Iwasawa Main Conjecture of Section \ref{SecIMC}.

Let $\ov{\F}$ be an algebraic closure of $\F$ and fix a topological generator $\g$ of the Galois group
$G_\F:=\Gal(\ov{\F}/\F)$ (the {\em arithmetic Frobenius}). For any field $L$ we denote by $L^{ar}$ the composition
$\ov{\F}L$ (i.e., the arithmetic extension of $L$): if $L/F$ is finite, then $\Gal(\ov{\F}L/L)\simeq G_\F$.
The arithmetic extension $F^{ar}$ is unramified at any prime and disjoint from $\calf$ (which is a
{\em geometric} extension), so $\Gal(\calf^{ar}/F)\simeq G_\infty\times G_\F$.

\subsection{Iwasawa modules in the $\pr$-cyclotomic extension}\label{NotIwaMod}
For any finite extension $L/F$, we let $\calCl(L)$ be the group of classes of degree zero divisors and we denote by $X_L$
the projective curve (defined over $\F$) associated with $L$. Let
\[ T_p(L):=T_p(Jac(X_L)(\ov{\F})) \]
be the $p$-Tate module of the $\ov{\F}$-points of the Jacobian of the curve $X_L$.
A first task is to compute the Fitting ideals of the modules
\[ \ov{C}_n:=\calCl(F_n^{ar})\{p\} \quad ,\quad C_n:=\calCl(F_n)\{p\} \quad{\rm and}\quad T_p(F_n)\]
as Iwasawa modules over some algebra containing $\Z_p[\G_n]$ (the $\{p\}$ indicates the $p$-part of the module; since
we shall mainly work with $p$-parts, the $\{p\}$ will often be omitted).
Recall that $T_p(F_n)\simeq \Hom(\Q_p/\Z_p,\ov{C}_n)$ as $\Z_p[\Delta][\G_n]$-modules.

Then we shall perform a limit on $n$ in order to provide a Fitting ideal in the Iwasawa algebra $\Lambda$.
This will be achieved by means of several maps induced by the natural norms and inclusions (see Section \ref{SecFittInf}).

For any prime $v$ of $F$, we let $F_n^{ar}(v)$ be the set of places of $F_n^{ar}$ lying above $v$ and we put
\[ H_{v,n}:= \Z_p[F_n^{ar}(v)] \ . \]
If $v$ is unramified in $F_n/F$, then $H_{v,n}$ is a $\Z_p$-free module of rank $|F_n^{ar}(v)|$ and (which is more relevant)
a $\Z_p[G_n]$-free module of rank $d_v:=\deg(v)$.

Since we shall work with Fitting ideals we recall one of the equivalent definition of these ideals (the one more
suitable for our computations).

\begin{defin}\label{DefFitt}
Let $M$ be a finitely generated module over a ring $R$. The {\em Fitting ideal} of $M$ over $R$, $\Fitt_R(M)$ is the ideal
of $R$ generated by the determinants of all the (minors of the) matrices of relations for a fixed set of generators of $M$.
\end{defin}

\subsubsection{Notation}
We remark that the integer $n\geq 1$ will always denote objects related with the $n$-th level $F_n$
of the $\pr$-cyclotomic extension. We shall work at a fixed finite level $n$ at first, then, in Section \ref{SecFittInf},
we let $n$ vary to compute limits.

\noindent To work with $\chi$-parts we extend our coefficients to $W$ by considering
$W\otimes_{\Z_p} M$ for any module $M$.
We will apply the decomposition \eqref{eqMchi} to $\liminv W\otimes_{\Z_p} C_n$,
$\liminv W\otimes_{\Z_p} \ov{C}_n$ or $\liminv W\otimes_{\Z_p} T_p(F_n)$.
To lighten notations we omit the $_{\Z_p}$; all tensor products will be defined
over $\Z_p$ unless we specify otherwise. For the same purpose whenever we have a map $\eta$ defined on a module $M$ we
shall still denote by $\eta$ the induced map ${\rm id}_W\otimes \eta$ on the module $W\otimes M$.

Finally, for any finite group $U$, we put
\[ n(U) := \sum_{h\in U} h \in \Z[U] \]
(this will mainly appear in the results on the $\chi_0$-part).

\subsection{Fitting ideals for the Tate module (I): finite level}\label{SecFittTate}
Let $\Fr_v$ denote the Frobenius of $v$ in $G_n$: it is the unique Frobenius attached to $v$ if $v$ is unramified, or
any lift of the Frobenius $\Fr_v\in G_n/I_{v,n}$ to $G_n$ if $v$ is ramified (this construction is easily seen to be
independent from the choice of the lift). In particular $\Fr_\pr=1$ (because $\pr$ is totally ramified in $F_n/F$) and
$\Fr_\infty=1$ as well because $\infty$ is totally split in $F_n^{I_{\infty,n}}/F$. We define the {\em Euler factor} at $v$ as
\[ e_v(X):=1-\Fr_v^{-1}X^{d_v} \ ,\]
where $X$ is a variable which will often be specialized to $\g^{-1}$, so we also put
\[ e_v:=e_v(\g^{-1})=1-\Fr_v^{-1}\g^{-d_v} \ .\]

\noindent The next result is exactly \cite[Lemmas 2.1 and 2.2]{GP2} for $F_n/F$.

\begin{lem}\label{FittHv}
Let $v$ be a place of $F$, then: \begin{enumerate}
\item if $v$ is unramified, we have $\Fitt_{\Z_p[G_n][[G_\F]]}(H_{v,n})=(e_v)$;
\item $\Fitt_{\Z_p[G_n][[G_\F]]}(H_{\infty,n})=(e_\infty, Aug_{\infty,n})=(1-\g^{-1}, Aug_{\infty,n})$;
\item $\Fitt_{\Z_p[G_n][[G_\F]]}(H_{\pr,n})=(e_\pr, Aug_{\pr,n})=(1-\g^{-d}, Aug_{\pr,n})$,
\end{enumerate}
where $Aug_{v,n}$ is the augmentation ideal associated to $I_{v,n}$, i.e., $Aug_{v,n}:=(\tau-1\,,\ \tau\in I_{v,n}\,)$.
\end{lem}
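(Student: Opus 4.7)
The strategy is to realise $H_{v,n}$ as a cyclic module over $R:=\Z_p[G_n][[G_\F]]$ and then identify its annihilator. Since both $G_n$ and $G_\F$ are abelian and commute with each other, $R$ is a commutative ring, so for any cyclic $R$-module $R/\mathcal{J}$ the Fitting ideal equals $\mathcal{J}$. Fix a prime $w$ of $F_n^{ar}$ lying above $v$, and let $D_w\subset \Gal(F_n^{ar}/F)=G_n\times G_\F$ be its decomposition group. Since $\Gal(F_n^{ar}/F)$ acts transitively on the set $F_n^{ar}(v)$, we have an isomorphism $H_{v,n}\cong R/\mathcal{J}_w$, where $\mathcal{J}_w$ is the (left) ideal generated by $\sigma-1$, $\sigma\in D_w$. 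Thus everything reduces to a correct description of $D_w$ in each of the three cases.

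For unramified $v$, the decomposition group $D_w$ is pro-cyclic, topologically generated by the Frobenius lift. Since the residue field of $v$ is $\F_{q^{d_v}}$, the residue extension at $w$ is $\ov\F/\F_{q^{d_v}}$, whose Frobenius is $\g^{d_v}$. On the geometric side this lifts to $\Fr_v\in G_n$, so $D_w$ is generated by $\Fr_v\g^{d_v}$. Therefore $\mathcal{J}_w=(\Fr_v\g^{d_v}-1)$, and multiplying by the unit $-(\Fr_v\g^{d_v})^{-1}$ gives the alternative generator $1-\Fr_v^{-1}\g^{-d_v}=e_v$, which proves (1). At $\infty$ the inertia is $I_{\infty,n}\hookrightarrow\Delta$ of order $q-1$, the Frobenius is trivial ($\Fr_\infty=1$) because $\infty$ is totally split in $F_n^{I_{\infty,n}}/F$, and $d_\infty=1$, so the lift of the residual Frobenius can be chosen as $(1,\g)\in G_n\times G_\F$. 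Hence $D_w=I_{\infty,n}\cdot\overline{\langle(1,\g)\rangle}$ and
\[
\mathcal{J}_w=(\tau-1:\tau\in I_{\infty,n})+(\g-1)=(Aug_{\infty,n},\,1-\g^{-1})=(Aug_{\infty,n},e_\infty),
\]
giving (2). At $\pr$ one has $I_{\pr,n}=G_n$ (totally ramified), $\Fr_\pr=1$, and residue field $\F_{q^d}$, so the lift of the residual Frobenius is $(1,\g^d)$ and $D_w=G_n\cdot\overline{\langle(1,\g^d)\rangle}$; the same argument yields (3).

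The only genuinely delicate point is the computation of $D_w$: one must verify that, after passing to the arithmetic cover $F_n^{ar}/F_n$ (which is everywhere unramified), the Frobenius lift in $G_n\times G_\F$ splits into the expected geometric and arithmetic components with the right residue degree. Once this is correctly done, the rest is bookkeeping and the verification that the two generators one finds in cases (2) and (3) can indeed be rewritten in the form $e_v$ stated in the lemma. This entire argument is essentially the one used by Greither--Popescu in \cite[Lemmas 2.1, 2.2]{GP2}; the proof should therefore consist of adapting their analysis to our $F_n/F$, checking that the special features of $\pr$ and $\infty$ (total ramification at $\pr$, total splitting of $\infty$ in $F_n^{I_{\infty,n}}/F$) enter exactly as in the statement.
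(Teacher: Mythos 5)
Your proof is correct and follows the same route as the paper, which gives no argument of its own but simply cites \cite[Lemmas 2.1 and 2.2]{GP2}; what you have done is spell out the Greither--Popescu computation (decomposition group generated by $\Fr_v\gamma^{d_v}$ together with inertia, cyclic module $R/\mathcal J_w$, Fitting ideal of a cyclic module equals its annihilator ideal) in the specific situation $F_n/F$. The only thing worth stressing --- which you flag yourself --- is that the identification $H_{v,n}\simeq R/\mathcal J_w$ requires $F_n^{ar}(v)$ to be finite and $D_w$ to be a closed subgroup topologically generated by $\Fr_v\gamma^{d_v}$ and the inertia, so that the ideal $\mathcal J_w$ is closed in the completed group ring; this holds here because $G_n$ is finite and the $G_\F$-component of $D_w$ is exactly the closed subgroup $\overline{\langle\gamma^{d_v}\rangle}$, and it is precisely the verification Greither--Popescu carry out.
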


\begin{rem}\label{IsoHv}
If the prime $v$ is unramified, then the module $H_{v,n}$ is cyclic over the ring $\Z_p[G_n][[G_\F]]$ and one has
\[ H_{v,n} \simeq \Z_p[G_n][[G_\F]]/\Fitt_{\Z_p[G_n][[G_\F]]}(H_{v,n}) = \Z_p[G_n][[G_\F]]/(e_v)\ .\]
If we consider ramified primes then the same holds over the ring $\Z_p[G_n/I_{v,n}][[G_\F]]$. Hence
\[ H_{\infty,n} \simeq \Z_p[G_n][[G_\F]]/(e_\infty,Aug_{\infty,n}) = \Z_p[\Delta/\F^*\times \G_n][[G_\F]]/(e_\infty) \]
and
\[ H_{\pr,n} \simeq \Z_p[G_n][[G_\F]]/(e_\pr,Aug_{\pr,n}) = \Z_p[[G_\F]]/(e_\pr) \ .\]
\end{rem}

Let $\Sigma$ be a finite set of places of $F$ disjoint from $S$ and, for any $n$, put $\ov{S}_n:=F_n^{ar}(S)$
(resp. $\ov{\Sigma}_n:=F_n^{ar}(\Sigma)$) for the set of places of $F_n^{ar}$ lying above places in $S$ (resp. $\Sigma$).
Con\-si\-der the Deligne's Picard 1-motive $\calm_{\ov{S}_n,\ov{\Sigma}_n}$ associated to $F_n^{ar}$, $\ov{S}_n$ and
$\ov{\Sigma}_n$; it is represented by a group homomorphism
\[ Div^0(\ov{S}_n) \longrightarrow Jac_{\ov{\Sigma}_n}(X_{F_n})(\ov{\F}) \ ,\]
where $Div^0(\ov{S}_n)$ is the kernel of the degree map $\Z[\ov{S}_n]\rightarrow \Z$ and $Jac_{\ov{\Sigma}_n}(X_{F_n})$
is the extension of the Jacobian of $X_{F_n}$ by a torus (for more details on the definition of $\calm_{\ov{S}_n,\ov{\Sigma}_n}$
and its properties we refer the reader to \cite[Section 2]{GP1}).

We shall be working with the $p$-part of class groups, hence (by \cite[Remark 2.7]{GP1}) there is no contribution
from the toric part of $Jac_{\ov{\Sigma}_n}(X_{F_n})$. Therefore we can neglect the set $\Sigma$ (i.e., assume it is empty)
in what follows and focus simply on our $S=\{\pr,\infty\}$.
The multiplication by $p$ map
\[ Div^0(\ov{S}_n)\otimes_\Z \Z/p^m \longrightarrow Div^0(\ov{S}_n)\otimes_\Z \Z/p^{m-1} \]
induces a surjective map on the $p^m$-torsion of $\calm_{\ov{S}_n}:=\calm_{\ov{S}_n,\emptyset}$.
Thus one defines the {\em $p$-adic Tate module} of $\calm_{\ov{S}_n}$ as
\[ T_p(\calm_{\ov{S}_n}):= \il{m} \calm_{\ov{S}_n}[p^m] \]
(see \cite[Definitions 2.5 and 2.6]{GP1}). With this notations, in our setting, the main result of \cite{GP1} reads as

\begin{thm}{\rm (Greither-Popescu \cite[Theorem 4.3]{GP1})}\label{ThmFittStick}
One has
\begin{equation}\label{FittStick}
\Fitt_{\Z_p[G_n][[G_\F]]} (T_p(\calm_{\ov{S}_n})) = (\Theta_{F_n/F,S}(\g^{-1})):= (\Theta_n(\g^{-1}))
\end{equation}
where $\Theta_n(X)$ is the Stickelberger element
\begin{equation}\label{DefStick}
\Theta_n(X)= \prod_{v\notin S} (1-\Fr_v^{-1}X^{d_v})^{-1} \in \Z[G_n][[X]]
\end{equation}
(see \eqref{tetn}, with $d_v:=\deg(v)$).
\end{thm}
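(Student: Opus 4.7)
The plan is to follow the strategy of Greither--Popescu \cite{GP1}, which proceeds by combining a canonical presentation of $T_p(\calm_{\ov{S}_n})$ coming from the 1-motive structure with a Weil-type identification of an equivariant Euler product as the characteristic polynomial of Frobenius on a Tate module.

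First, I would invoke the tautological short exact sequence of $\Z_p[G_n][[G_\F]]$-modules induced by the definition of the Picard 1-motive,
\[ 0 \to T_p(F_n) \to T_p(\calm_{\ov{S}_n}) \to \Z_p \otimes_\Z Div^0(\ov{S}_n) \to 0. \]
The module on the right sits in turn in a short exact sequence
\[ 0 \to \Z_p \otimes Div^0(\ov{S}_n) \to H_{\pr,n} \oplus H_{\infty,n} \xrightarrow{\deg} \Z_p \to 0, \]
and Lemma \ref{FittHv} computes $\Fitt(H_{\pr,n})$ and $\Fitt(H_{\infty,n})$ explicitly in terms of the Euler factors $e_\pr$, $e_\infty$ and the inertia augmentation ideals $Aug_{\pr,n}$, $Aug_{\infty,n}$. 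So the Fitting ideal of the quotient $\Z_p \otimes Div^0(\ov{S}_n)$ is fully controlled by data at the places in $S$.

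Next I would compute the Fitting ideal of $T_p(F_n)$ itself. By the Grothendieck--Lefschetz trace formula, the numerator of the Hasse--Weil zeta function of $X_{F_n}$ equals the characteristic polynomial of the geometric Frobenius acting on $T_p(F_n)^*$, and the Euler product expansion of that numerator recovers $\Theta_n(X)$. Equivariantly over $\Z_p[G_n][[G_\F]]$, this translates into the statement that $\Theta_n(\g^{-1})$, up to Euler factors coming from $S$ and the $(1-\g^{-1})(1-q\g^{-1})$ denominator of the zeta function, generates $\Fitt_{\Z_p[G_n][[G_\F]]}(T_p(F_n))$. This step is the geometric heart of the computation, realizing $T_p(F_n)$ (after tensoring with $\Q_p$) as a cokernel of a matrix whose determinant is essentially the equivariant L-polynomial.

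Finally I would splice the two computations via the short exact sequence of the first step, using the standard multiplicativity-in-exact-sequences property of Fitting ideals (where equality holds here for cohomological/projective-dimension reasons). The Euler factors at $\pr$ and $\infty$ contributed by $\Fitt(\Z_p \otimes Div^0(\ov{S}_n))$ should cancel precisely against the \emph{trivial} factors appearing on the Weil-zeta side for $T_p(F_n)$, producing the clean identity $\Fitt(T_p(\calm_{\ov{S}_n})) = (\Theta_n(\g^{-1}))$.

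The main obstacle is the bookkeeping at the ramified primes $\pr$ and $\infty$: the inertia groups contribute augmentation ideals to $\Fitt(H_{v,n})$, and one must verify carefully that these mesh exactly with the correction factors on the Weil-zeta side, with no spurious leftovers from the $H^0$ and $H^2$ of the curve. Carrying this out rigorously over the non-noetherian ring $\Z_p[G_n][[G_\F]]$, where generic-rank arguments are delicate, is the technical core of the Greither--Popescu argument.
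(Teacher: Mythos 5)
The theorem you are asked to prove is not proved in this paper: it is stated with the explicit attribution ``(Greither--Popescu \cite[Theorem 4.3]{GP1})'' and used as a black box. There is therefore no internal proof to match against; what can be assessed is whether your reconstruction reflects GP1's argument, and it does not --- it inverts the logical order in a way that runs into a genuine obstruction.

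You propose to first compute $\Fitt_{\Z_p[G_n][[G_\F]]}(T_p(F_n))$ via the Grothendieck--Lefschetz trace formula and then splice with the sequence \eqref{SeqL} to recover $\Fitt(T_p(\calm_{\ov{S}_n}))$. But the trace formula only identifies a characteristic polynomial of $\g$ acting on $T_p(F_n)\otimes\Q_p$ as a $\Q_p$-vector space; it does not yield a generator of the equivariant Fitting ideal over $\Z_p[G_n][[G_\F]]$, because $T_p(F_n)$ is not $G_n$-cohomologically trivial and hence not of projective dimension $\leq 1$ over that ring (the paper itself records, in Remark \ref{RemChi0}, that its $\chi_0$-component fails cohomological triviality over $\G_n$). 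The entire point of replacing $Jac(X_{F_n})$ by the Picard 1-motive $\calm_{\ov{S}_n}$ in GP1 is to manufacture a module that \emph{is} $G$-cohomologically trivial (GP1, Theorem 3.9), so that its equivariant Fitting ideal is principal and equal to $\bigl(\det(1-\g^{-1}X\mid T_p(\calm_{\ov{S}_n}))\bigr)$, after which the Euler-product identification of that determinant with $\Theta_n(X)$ is the analytic input. In other words GP1 computes $\Fitt(T_p(\calm_{\ov{S}_n}))$ directly and \emph{derives} $\Fitt(T_p(F_n))$ from it (in this paper that derivation is Proposition \ref{FittTFn}); your plan attempts to run that implication backwards. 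Your appeal to ``the standard multiplicativity-in-exact-sequences property of Fitting ideals'' also papers over the real difficulty: Fitting ideals are not multiplicative in short exact sequences, and the applicable substitute, \cite[Lemma 2.4]{GP2}, is a four-term lemma whose hypotheses require precisely the projective-dimension-one statement for $T_p(\calm_{\ov{S}_n})$ that GP1's cohomological triviality theorem supplies --- so even your splicing step would need to import the hard part of GP1, at which point the direct determinant argument is simpler and is what GP1 actually does.
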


The relation between $\calm_{\ov{S}_n}$ and degree zero divisors with support in $\ov{S}_n$ leads to an exact sequence
\begin{equation}\label{SeqL}
0 \rightarrow T_p(F_n) \rightarrow T_p(\calm_{\ov{S}_n}) \rightarrow Div^0(\ov{S}_n)\otimes_\Z \Z_p \rightarrow 0 \ ,
\end{equation}
(see \cite[after Definition 2.6]{GP1}).

\noindent Our first task is to compute the Fitting ideal (over $\Z_p[G_n][[G_\F]]$)
of $T_p(F_n)$ and then project into $\Z_p[G_n]$ by specializing at $\g^{-1}=1$.
To do this we have to study the $\chi$-parts of the module $D_n:=Div^0(\ov{S}_n)\otimes_\Z \Z_p$ using the fact that
it is contained in $\Z_p[\ov{S}_n]=\oplus_{v\in S}H_{v,n}$. In most cases we will be able to compute
Fitting ideals using short exact sequences, while for the ``trivial'' component we will have to look for a resolution
\[ 0 \rightarrow D_n(\chi_0) \rightarrow X_3 \rightarrow X_4 \rightarrow 0 \]
that will fit in the sequence \eqref{SeqL} transforming it in a 4-term exact sequence to which we can apply \cite[Lemma 2.4]{GP2}.

\subsubsection{The $\chi$-parts of $D_n$}\label{ChiPartsL}
As seen in Remark \ref{IsoHv}, we have
\[ H_{\infty,n} \simeq \Z_p[G_n/I_{\infty,n}][[G_\F]]/(1-\g^{-1}) \simeq \Z_p[\Delta/\F^*\times \G_n] \ .\]
Therefore the $\chi$-parts depend on the values of $\chi$ on the elements of $\F^*=I_{\infty,n}$ and we have
\begin{equation}\label{ChiHinfty}
(W\otimes H_{\infty,n})(\chi) \simeq \left\{ \begin{array}{cl} 0 & {\rm if}\ \chi\ {\rm is \ odd}\\
\ & \\
W[\G_n] & {\rm if}\ \chi\ {\rm is \ even} \end{array} \right. \ . \end{equation}

\noindent Since there is no action of $\Delta$ on $H_{\pr,n} \simeq \Z_p[[G_\F]]/(e_\pr)$, we have
\begin{equation}\label{ChiHpr} (W\otimes H_{\pr,n})(\chi) \simeq \left\{ \begin{array}{cl} 0 & {\rm if}\ \chi\neq \chi_0\\
\ & \\
W[[G_\F]]/(e_\pr) & {\rm if}\ \chi=\chi_0 \end{array} \right. \ . \end{equation}

\noindent For any $\chi\neq \chi_0$ we can also observe that
\[ (W\otimes D_n)(\chi) = \Ker\{ e_\chi(W\otimes (H_{\pr,n} \oplus H_{\infty,n}))
\longrightarrow e_\chi (W\otimes \Z_p) = 0\} \simeq (W\otimes H_{\infty,n} )(\chi) \ , \]
so we are left with the trivial component $(W\otimes D_n)(\chi_0)$.

The degree map on $H_{\pr,n}$ provides a decomposition
\[ H_{\pr,n} \simeq (1-\g^{-1})H_{\pr,n} \oplus \Z_p \ ,\]
where $(1-\g^{-1})H_{\pr,n}$ is obviously in the kernel of the degree map on $D_n$ as well. For the trivial component we have
$e_{\chi_0}(W\otimes H_{\pr,n})= e_{\chi_0}(W\otimes (1-\g^{-1})H_{\pr,n}) \oplus W \cdot 1_{H_\pr}$ (where $1_{H_\pr}$ is the
unit element of $H_{\pr,n}$), and the map
\[ e_{\chi_0} (W\otimes (1-\g^{-1})H_{\pr,n} )\oplus e_{\chi_0}(W\otimes H_{\infty,n} ) \longrightarrow
e_{\chi_0} (W\otimes D_n) \]
given by
\[ (\alpha, \beta) \to (\alpha - \deg(\beta) 1_{H_\pr},\beta) \in e_{\chi_0}(W\otimes (H_{\pr,n} \oplus H_{\infty,n})) \]
is easily seen to be an isomorphism of $W[\G_n][[G_\F]]$-modules. Hence we obtain
\begin{equation}\label{ChiLnd}
(W\otimes D_n)(\chi) \simeq \left\{ \begin{array}{cl} 0 & {\rm if}\ \chi\ {\rm is\ odd} \\
\ & \\
W[\G_n] & {\rm if}\ \chi\neq \chi_0 \ {\rm is \ even} \\
\ & \\
(1-\g^{-1})W[[G_\F]]/(1-\g^{-d}) \oplus W[\G_n] & {\rm if}\
\chi = \chi_0 \end{array} \right. \ . \end{equation}

\subsubsection{Computation of Fitting ideals (I)}
The previous descriptions of the $(W\otimes D_n)(\chi)$ allow the
first computation of Fitting ideals for the Tate modules.

\begin{prop}\label{FittTFn} We have
\[ \Fitt_{W[\G_n][[G_\F]]} ((W\otimes T_p(F_n))(\chi)) =
\left\{ \begin{array}{cl} (\Theta_n(\g^{-1},\chi)) & {\rm if}\ \chi\ {\rm is\ odd} \\
\ & \\
\left(\frac{\Theta_n(\g^{-1},\chi)}{1-\g^{-1}} \right) & {\rm if}\
\chi\neq \chi_0 \ {\rm is \ even} \end{array} \right. \]
and
\[ \Fitt_{W[\G_n][[G_\F]]} ((W\otimes T_p(F_n))(\chi_0)^*) =
\frac{\Theta_n(\g^{-1},\chi_0)}{1-\g^{-1}} \left( 1,\frac{n(\G_n)}{\nu_d}\right) \]
(where $\,^*$ denotes the $\Z_p$-dual and $\nu_d:=\frac{1-\g^{-d}}{1-\g^{-1}}$).
\end{prop}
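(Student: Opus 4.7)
The plan is to exploit the fundamental exact sequence \eqref{SeqL}, whose middle term $T_p(\calm_{\ov{S}_n})$ has known Fitting ideal $(\Theta_n(\g^{-1}))$ by Greither--Popescu's Theorem \ref{ThmFittStick}. Tensoring with $W$ and taking $\chi$-components is exact (since $|\Delta|=q^d-1$ is prime to $p$), yielding
\[ 0 \to (W\otimes T_p(F_n))(\chi) \to (W\otimes T_p(\calm_{\ov{S}_n}))(\chi) \to (W\otimes D_n)(\chi) \to 0 \]
for each $\chi$, and by compatibility of Fitting ideals with idempotent projections the middle module has Fitting ideal $(\Theta_n(\g^{-1},\chi))$ over $R:=W[\G_n][[G_\F]]$. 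The three cases of the proposition are then handled according to the classification of $(W\otimes D_n)(\chi)$ from \eqref{ChiLnd}.

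For $\chi$ odd, $(W\otimes D_n)(\chi)=0$, so the first two terms coincide and the formula is immediate. For $\chi\neq\chi_0$ even, $(W\otimes D_n)(\chi)\simeq R/(1-\g^{-1})$ is cyclic with a non-zero-divisor annihilator. Choose a presentation $R^m\xrightarrow{A} R^n\twoheadrightarrow (W\otimes T_p(\calm_{\ov{S}_n}))(\chi)$ in which the $n$-th standard generator $e_n$ lifts $1\in R/(1-\g^{-1})$. Every relation $\sum_i a_{ij}e_i=0$ must project to $0$ in $R/(1-\g^{-1})$, so $a_{nj}\in(1-\g^{-1})R$ for all $j$. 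Replacing $e_n$ by $\tilde e_n:=(1-\g^{-1})e_n$ yields a presentation matrix $\tilde A$ of $(W\otimes T_p(F_n))(\chi)$ obtained from $A$ by dividing the last row by $(1-\g^{-1})$. Each $n\times n$ minor of $\tilde A$ is therefore $(1-\g^{-1})^{-1}$ times the corresponding minor of $A$, so
\[ \Fitt_R\bigl((W\otimes T_p(F_n))(\chi)\bigr) = \bigl(\Theta_n(\g^{-1},\chi)/(1-\g^{-1})\bigr). \]

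The main obstacle is the trivial character case $\chi_0$. Here $(W\otimes D_n)(\chi_0)\simeq W[[G_\F]]/(\nu_d)\oplus W[\G_n]$ is not cyclic over $R$; the extra summand $W[[G_\F]]/(\nu_d)$ comes from the ramified prime $\pr$ and carries trivial $\G_n$-action. As outlined just before the proposition, we construct a resolution
\[ 0 \to (W\otimes D_n)(\chi_0) \to X_3 \to X_4 \to 0 \]
by $R$-modules with controlled Fitting ideals, using the Fitting ideal computations of $H_{\pr,n}$ and $H_{\infty,n}$ from Lemma \ref{FittHv}. Splicing with the $\chi_0$-part of \eqref{SeqL} produces the 4-term exact sequence
\[ 0 \to (W\otimes T_p(F_n))(\chi_0) \to (W\otimes T_p(\calm_{\ov{S}_n}))(\chi_0) \to X_3 \to X_4 \to 0 \]
to which \cite[Lemma 2.4]{GP2} applies; that lemma expresses the Fitting ideal of the $\Z_p$-dual of the leftmost module in terms of the other three, which explains why the $^*$ is forced on the left-hand side of the stated formula. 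The delicate point will be choosing $X_3,X_4$ so that the extra generator $n(\G_n)/\nu_d$ in the final ideal emerges transparently from tracking the contribution of the non-cyclic summand $W[[G_\F]]/(\nu_d)$ (a ``Norm divided by $\nu_d$'' term) through the 4-term sequence, while the factor $\Theta_n(\g^{-1},\chi_0)/(1-\g^{-1})$ continues to appear as in the even non-trivial case.
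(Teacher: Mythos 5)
Your overall strategy --- exploit the exact sequence \eqref{SeqL}, apply Theorem \ref{ThmFittStick} to the middle term, and split into cases according to the computation of $(W\otimes D_n)(\chi)$ in \eqref{ChiLnd} --- is the same as the paper's. Cases 1 and 2 are carried out correctly. For Case 2 the paper simply cites \cite[Lemma 3]{CG}; you instead prove the same divisibility by hand, choosing generators so that the last row of the relation matrix is divisible by $1-\g^{-1}$ and then dividing; since $1-\g^{-1}$ is a non--zero-divisor in $W[\G_n][[G_\F]]$ this presentation argument is valid, and it is a legitimate, more self-contained substitute for the cited lemma.

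Case 3 ($\chi=\chi_0$), however, is a genuine gap. What you write there is essentially a paraphrase of the outline that already appears in the text just before the Proposition: you announce that one must build $0\to D_n(\chi_0)\to X_3\to X_4\to 0$, splice it into \eqref{SeqL}, and invoke \cite[Lemma 2.4]{GP2}, but you do not produce $X_3$, $X_4$, or the maps, do not verify exactness of the spliced four-term sequence, do not check the hypotheses of the Greither--Popescu lemma (that $T_p(\calm_{\ov{S}_n})(\chi_0)$ has no nontrivial finite submodule and projective dimension $\le 1$, and that the tail modules are $\Z_p$-free), and do not carry out the resulting Fitting-ideal arithmetic. In particular, the stated answer $\frac{\Theta_n(\g^{-1},\chi_0)}{1-\g^{-1}}\left(1,\frac{n(\G_n)}{\nu_d}\right)$ comes from realizing the $\infty$-part of $D_n(\chi_0)$ as the kernel of $\Z_p[\G_n][[G_\F]]/(e_\infty)\twoheadrightarrow\Z_p[\G_n][[G_\F]]/(e_\infty,n(I_{\infty,n}))$ and the $\pr$-part similarly with an extra prefactor $(1-\g^{-1})$ (which produces the $\nu_d$), and then from the fact that $n(I_{\infty,n})$ and $n(\Delta)$ become units after projecting to the $\chi_0$-component so that only $n(\G_n)$ survives. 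None of this bookkeeping is in your proposal, and it is exactly where the content of Case 3 lies.
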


\begin{proof} We split the proof in three parts, depending on the type of the character $\chi\in \widehat{\Delta}$.\\
{\bf Case 1: $\chi$ is odd.} Since $e_\chi D_n=0$, we have an isomorphism
\[ (W\otimes T_p(F_n))(\chi) \simeq (W\otimes T_p(M_{\ov{S}_n}))(\chi)\ .\]
Hence, by Theorem \ref{ThmFittStick} above,
\[ \Fitt_{W[\G_n][[G_\F]]} ((W\otimes T_p(F_n))(\chi)) = (\Theta_n(\g^{-1},\chi)) \]
(because, by equation \eqref{EqChiTheta}, $e_{\chi}\Theta_n(X)=\Theta_n(X,\chi)e_{\chi}$).

\noindent {\bf Case 2: $\chi\neq\chi_0$ is even.} In this case
$(W\otimes D_n)(\chi)\simeq W[\G_n][[G_\F]]/(1-\g^{-1})$
is a cyclic $W[\G_n][[G_\F]]$-module. We have an exact
sequence
\[ (W\otimes T_p(F_n))(\chi) \iri (W\otimes T_p(M_{\ov{S}_n}))(\chi) \sri  W[\G_n][[G_\F]]/(1-\g^{-1}) \]
to which we can apply \cite[Lemma 3]{CG} to get
\[ \Fitt_{W[\G_n][[G_\F]]} ((W\otimes T_p(F_n))(\chi)) (1-\g^{-1}) = (\Theta_n(\g^{-1},\chi)) \ .\]

\noindent{\bf Case 3: $\chi=\chi_0$.}
Consider the resolution for $D_n(\chi_0)$ provided by the sequences
\begin{equation}\label{ResHinfty}
\Z_p[\G_n] \iri \Z_p[\G_n][[G_\F]]/(e_\infty) \sri \Z_p[\G_n][[G_\F]]/(e_\infty,n(I_{\infty,n}))
\end{equation}
and
\begin{equation}\label{ResHpr}
(1-\g^{-1}) \Z_p[[G_\F]]/(e_\pr) \!\! \iri \!\! (1-\g^{-1}) \Z_p[\G_n][[G_\F]]/(e_\pr)
\!\! \sri \!\! (1-\g^{-1}) \Z_p[\G_n][[G_\F]]/(e_\pr,n(I_{\pr,n}))
\end{equation}
where the map on the left is given by $1_{H_v}\to n(I_{v,n})$ ($v=\infty$, $\pr$). To check exactness one simply observes
that all the modules involved are $\Z_p$-free modules and counts ranks. Joining the sequences \eqref{ResHinfty} and \eqref{ResHpr}
with the sequence \eqref{SeqL} and tensoring with $W$ (limiting ourselves to the $\chi_0$-part), we find
\[ (W\otimes T_p(F_n))(\chi_0) \iri  (W\otimes T_p(M_{\ov{S}_n}))(\chi_0) \rightarrow
W[\G_n][[G_\F]]/(e_\infty)\oplus W[\G_n][[G_\F]]/(\nu_d) \]
\vspace{-1truecm}
\[ \hspace{7.5truecm} \xymatrix{ \ar@{->>}[d] \\ \ } \]
\vspace{-.4truecm}
\[ \hspace{5truecm} W[\G_n][[G_\F]]/(e_\infty,n(I_{\infty,n}))\oplus W[\G_n][[G_\F]]/(\nu_d ,n(I_{\pr,n})) \ . \]
We note that the assumptions of \cite[Lemma 2.4]{GP2} hold for the previous sequence (actually they hold before tensoring
with $W$ but the computation of Fitting ideals is not affected by that, moreover we use the full ring $R=\Z_p[\G_n][[G_\F]]$
instead of the $R'$ of the original paper but the lemma still holds as the authors mention right before stating it).
Indeed $T_p(M_{\ov{S}_n})(\chi_0)$ is finitely generated and free over $\Z_p$ (so it has no nontrivial finite submodules) and it is
$\G_n$-cohomologically trivial by the proof of \cite[Theorem 3.9]{GP1}: hence it is of projective dimension 1 over
$\Z_p[\G_n][[G_\F]]$ by \cite[Proposition 2.2 and Lemma 2.3]{Po}. The other 3 modules are finitely generated and free
over $\Z_p$ and, obviously, $\Z_p[\G_n][[G_\F]]/(e_\infty)\oplus \Z_p[\G_n][[G_\F]]/(\nu_d)$ has
projective dimension at most 1. Therefore we can apply \cite[Lemma 2.4]{GP2} which immediately yields the final
statement of the proposition.
\end{proof}

\subsection{Fitting ideals for class groups}
There are deep relations between $T_p(F_n)$ and the modules
$C_n:=\calCl(F_n)\{p\}$ (the ones we are primarily interested in). Indeed, as noted at the beginning
of \cite[Section 3]{GP2}, the $\Z_p$-dual $T(F_n)^*$ of $T(F_n)$ verifies
\begin{equation}\label{GPFromTpToCn} (T_p(F_n)^*)_{G_\F} \simeq \calCl(F_n)\{p\}^\vee =C_n^\vee\ ,
\end{equation}
i.e., its $G_\F$-coinvariants are isomorphic to the Pontrjagin dual of $C_n$.
Another one is provided by the following

\begin{lem}\label{FromTpToCn}
We have an isomorphism of $\Z_p[G_n]$-modules
\[ C_n \simeq T_p(F_n)/(1-\g^{-1})T_p(F_n)=T_p(F_n)_{G_\F} \ .\]
\end{lem}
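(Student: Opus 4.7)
The plan is to deduce this from Lang's theorem applied to the Jacobian $J:=\mathrm{Jac}(X_{F_n})$, a smooth connected commutative algebraic group over $\F$. First recall that $\calCl(F_n)\simeq J(\F)$, so $C_n = J(\F)\{p\}$. Since $J(\F)$ is finite (the group of $\F$-rational points of a projective variety over a finite field), I get two useful consequences: $T_p(J(\F))=0$ and $\varprojlim_m J(\F)/p^m J(\F) = J(\F)\{p\} = C_n$.

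Lang's theorem (plus the fact that $\g$ is a bijection on $J(\ov\F)$, which lets me freely pass from $1-\g$ to $1-\g^{-1}$ via the identity $1-\g^{-1} = -\g^{-1}(1-\g)$) provides the short exact sequence of $\Z_p[G_n][[G_\F]]$-modules
\[ 0 \to J(\F) \to J(\ov\F) \xrightarrow{1-\g^{-1}} J(\ov\F) \to 0. \]
I would then apply the snake lemma to the multiplication-by-$p^m$ endomorphism of this sequence. Since $p^m$ is surjective on $J(\ov\F)$ (divisibility of the abelian variety), the resulting four-term exact sequence is
\[ 0 \to J[p^m](\F) \to J[p^m](\ov\F) \xrightarrow{1-\g^{-1}} J[p^m](\ov\F) \to J(\F)/p^m J(\F) \to 0. \]

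Passing to the inverse limit over $m$ (Mittag-Leffler is automatic because all terms are finite $p$-groups), the two middle terms become $T_p(F_n)$, the leftmost becomes $T_p(J(\F))=0$, and the rightmost becomes $C_n$. This yields
\[ 0 \to T_p(F_n) \xrightarrow{1-\g^{-1}} T_p(F_n) \to C_n \to 0, \]
hence the desired isomorphism $C_n \simeq T_p(F_n)/(1-\g^{-1})T_p(F_n)$. Equivariance under $\Z_p[G_n]$ is automatic because the $G_n$-action on everything commutes with $G_\F$. The identification with the coinvariants $T_p(F_n)_{G_\F}$ is formal: in $\Z_p[[G_\F]]$ one has $1-\g^{-1} = -\g^{-1}(\g-1)$, and since $\g$ is a unit, the ideals $(1-\g^{-1})$ and $(\g-1)$ coincide, so $(1-\g^{-1})T_p(F_n)$ is precisely the image of the augmentation ideal of $\Z_p[[G_\F]]$ on $T_p(F_n)$.

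The only substantive input is Lang's theorem; the rest is bookkeeping, and the only step that requires any thought --- preservation of exactness under inverse limits --- is immediate from finiteness. So I don't foresee a genuine obstacle in this argument.
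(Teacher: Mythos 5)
Your proof is correct, and it starts from exactly the same key input as the paper's: Lang's theorem applied to the Jacobian, giving the short exact sequence $0\to J(\F)\to J(\ov\F)\xrightarrow{1-\g^{-1}} J(\ov\F)\to 0$. The difference lies only in the bookkeeping used to extract the conclusion from that sequence. The paper passes immediately to the $p$-primary divisible group $\ov C_n$, applies $\Hom(\Q_p/\Z_p,-)$, and uses the vanishing of $\Hom(\Q_p/\Z_p,C_n)$ (finiteness of $C_n$) and of $\Ext^1(\Q_p/\Z_p,\ov C_n)$ (divisibility of $\ov C_n$) together with the canonical identification $C_n\simeq\Ext^1(\Q_p/\Z_p,C_n)$. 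You instead sidestep derived functors entirely: you hit the Lang sequence with multiplication by $p^m$, invoke the snake lemma, and take an inverse limit over $m$, using $T_p(J(\F))=0$ and $\varprojlim_m J(\F)/p^m J(\F)\simeq C_n$, both of which are routine consequences of the finiteness of $J(\F)$, with Mittag--Leffler trivially satisfied since all terms are finite. Both derivations are correct and morally equivalent; yours is slightly more elementary in that it avoids explicit $\Ext$ computations, while the paper's is shorter once one is comfortable with the homological algebra of $\Q_p/\Z_p$. One small point worth making explicit in your writeup: the compatibility of transition maps in your inverse system comes from the commuting square with vertical maps $p$ (on the source) and $\mathrm{id}$ (on the target) intertwining $\cdot p^{m+1}$ and $\cdot p^m$, which makes the induced maps multiplication by $p$ on the $J[p^m]$ terms and the natural quotient on the $J(\F)/p^m J(\F)$ terms -- precisely what is needed for the limits you claim.
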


\begin{proof} We recall that $\ov{C}_n$ is the
$p$-Sylow of $\calCl(F_n^{ar})$, hence it is divisible and isomorphic to $(\Q_p/\Z_p)^r$ for some
$r\leq g_n$ (where $g_n$ is the genus of $X_{F_n}$).
Obviously $\ov{C}_n$ is a $\Z_p[G_n][[G_\F]]$-module and we have
\[ T_p(F_n)=\plim{m}\,\ov{C}_n\,[p^m] \simeq \Hom(\Q_p/\Z_p,\ov{C}_n) \]
(the Galois action on the module on the right is the usual one $(\sigma\cdot f)(y):=\sigma f(\sigma^{-1}y)$
for any $f\in \Hom(\Q_p/\Z_p,\ov{C}_n)$).

\noindent By Lang's theorem (see, for example, \cite[Chapter VI, \S 4]{Se}) we have an exact sequence
\[ 0\rightarrow C_n\rightarrow\ov{C}_n\,
{\buildrel {1-\g^{-1}}\over{-\!\!\!-\!\!\!-\!\!\!-\!\!\!-\!\!\!\!\longrightarrow}}\,\ov{C}_n\rightarrow 0 \ .\]
Applying the functor $\Hom(\Q_p/\Z_p,*)$ (a similar argument can be found in \cite[Lemma 4.1]{An}) one gets
\[ 0\rightarrow T_p(F_n)\,{\buildrel {1-\g^{-1}}\over{-\!\!\!-\!\!\!-\!\!\!-\!\!\!-\!\!\!\!\longrightarrow}}
\,T_p(F_n)\rightarrow \Ext^1(\Q_p/\Z_p,C_n)\rightarrow 0 \]
because $\Hom(\Q_p/\Z_p,C_n)=0$ ($C_n$ is finite) and $\Ext^1(\Q_p/\Z_p,\ov{C}_n)=0$ ($\ov{C}_n$ is divisible).
Now from the usual short exact sequence
\[ 0\rightarrow\Z_p\rightarrow\Q_p\rightarrow\Q_p/\Z_p\rightarrow 0 \ ,\]
applying $\Hom(*,C_n)$, we obtain
\[ \Hom(\Q_p,C_n)=0 \rightarrow \Hom(\Z_p,C_n)\simeq C_n\rightarrow \Ext^1(\Q_p/\Z_p,C_n)\rightarrow \Ext^1(\Q_p,C_n)=0 \ .\]
Therefore we have an isomorphism
\[ C_n\simeq \Ext^1(\Q_p/\Z_p,C_n) \simeq T_p(F_n)/(1-\g^{-1})T_p(F_n)=T_p(F_n)_{G_\F} \ .\qedhere\]
\end{proof}

\begin{rem} \label{Duals}
Equation \eqref{GPFromTpToCn} and Lemma \ref{FromTpToCn} together yield
\[ (T_p(F_n)^*)_{G_\F}\simeq C_n^\vee \simeq (T_p(F_n)_{G_\F})^\vee \ .\]
A general statement of this type appears in \cite[Lemma 5.18]{GP1}.\end{rem}

\begin{defin} Let $\chi\in \Hom(\Delta,W^*)$ with $\chi\neq\chi_0$ and $n\in\N\cup\{\infty\}$, we define the
{\em modified Stickelberger function} as
\[ \Theta_n^{\#}(X,\chi):=\left\{\begin{array}{ll}
\Theta_n(X,\chi) & {\rm if\ }\chi\ {\rm is\ odd}\\
\ & \ \\
\ds{\frac{\Theta_n(X,\chi)}{1-X}} & {\rm if\ }\chi\ {\rm is\ even}
\end{array}\right. \ . \]
\end{defin}

\noindent Consider the projection map
$\pi_{G_\F}:W[\G_n][[G_\F]]\rightarrow W[\G_n]$ which maps $\g$ to $1$. The properties of Fitting ideals and
Lemma \ref{FromTpToCn} yield
\begin{cor}\label{cor3.4} For $\chi\neq \chi_0$  we have
\[ \Fitt_{W[\G_n]}((W\otimes C_n)(\chi))=(\Theta_n^{\#}(1,\chi)) \ .\]
\end{cor}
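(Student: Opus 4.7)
The plan is to read off the corollary from Proposition \ref{FittTFn} by specializing the arithmetic Frobenius, with Lemma \ref{FromTpToCn} as the bridge. The tool is the standard base-change compatibility of Fitting ideals: for a ring map $R \to S$ and a finitely generated $R$-module $M$, one has $\Fitt_S(M \otimes_R S) = \Fitt_R(M) \cdot S$.

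I would first observe that Lemma \ref{FromTpToCn}, together with the flatness of $W$ over $\Z_p$ and the fact that the idempotent $e_\chi \in W[\Delta]$ commutes with the $G_\F$-action (since $\Delta$ and $G_\F$ act independently on $T_p(F_n)$), yields a chain of natural $W[\G_n]$-isomorphisms
\[ (W \otimes C_n)(\chi) \simeq (W \otimes T_p(F_n))(\chi)_{G_\F} \simeq (W \otimes T_p(F_n))(\chi) \otimes_{W[\G_n][[G_\F]]} W[\G_n], \]
where the second tensor product is taken along the augmentation $\pi_{G_\F}\colon W[\G_n][[G_\F]] \to W[\G_n]$ sending $\g \mapsto 1$ (which exactly realizes $G_\F$-coinvariants as $M/(\g-1)M$).

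Next I would apply Proposition \ref{FittTFn}. Its two cases for $\chi \neq \chi_0$ can be uniformly repackaged, by the very definition of $\Theta_n^{\#}$, as
\[ \Fitt_{W[\G_n][[G_\F]]}\bigl((W \otimes T_p(F_n))(\chi)\bigr) = \bigl(\Theta_n^{\#}(\g^{-1}, \chi)\bigr). \]
Pushing this equality through $\pi_{G_\F}$ via the base-change formula immediately gives
\[ \Fitt_{W[\G_n]}\bigl((W \otimes C_n)(\chi)\bigr) = \bigl(\pi_{G_\F}(\Theta_n^{\#}(\g^{-1}, \chi))\bigr) = \bigl(\Theta_n^{\#}(1, \chi)\bigr), \]
which is the statement of the corollary.

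The only point worth checking is that, in the even case, the element $\Theta_n^{\#}(\g^{-1}, \chi) = \Theta_n(\g^{-1}, \chi)/(1-\g^{-1})$ actually lies in $W[\G_n][[G_\F]]$, i.e., that $1-\g^{-1}$ divides $\Theta_n(\g^{-1}, \chi)$ there. This is the ``trivial zero'' of the Stickelberger series at $X=1$ for even non-trivial $\chi$ and is implicitly handled in Proposition \ref{FittTFn} itself. Once this is granted, the evaluation $\g \mapsto 1$ produces $\Theta_n^{\#}(1, \chi)$ by definition, and no further obstacle arises: the corollary is essentially a formal consequence of Proposition \ref{FittTFn} and Lemma \ref{FromTpToCn}.
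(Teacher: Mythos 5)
Your proposal is correct and is essentially the paper's own argument: the authors likewise obtain the corollary by combining Lemma \ref{FromTpToCn} (identifying $C_n$ with $T_p(F_n)_{G_\F}$) with Proposition \ref{FittTFn} and the compatibility of Fitting ideals under the projection $\pi_{G_\F}\colon W[\G_n][[G_\F]]\to W[\G_n]$, $\g\mapsto 1$. Your added check that $\Theta_n^{\#}(\g^{-1},\chi)$ is a genuine element of $W[\G_n][[G_\F]]$ in the even case is implicit in the paper's formulation of Proposition \ref{FittTFn}, so no further justification is needed.
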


\noindent While the isomorphism \eqref{GPFromTpToCn} leads to
\begin{cor}\label{FittTrivChar} For the trivial character $\chi_0$ we have
\[ \Fitt_{W[\G_n]}((W\otimes C_n^\vee )(\chi_0)\otimes_W Q(W))=
\frac{\Theta_n(X,\chi_0)}{1-X}|_{X=1}\left(1,\frac{n(\G_n)}{d}\right) \]
(where $Q(W)$ is the quotient field of $W$).
\end{cor}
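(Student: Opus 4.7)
The statement is the $\chi_0$-analogue of Corollary \ref{cor3.4}, and I would derive it directly from Proposition \ref{FittTFn} (Case 3) combined with the isomorphism $(T_p(F_n)^*)_{G_\F}\simeq C_n^\vee$ recalled in \eqref{GPFromTpToCn} and Remark \ref{Duals}. The idempotent $e_{\chi_0}$ is central and $\chi_0 = \chi_0^{-1}$, so it commutes both with $G_\F$-coinvariants and with $\Z_p$-dualization. Hence the identification above restricts to
\[
(W\otimes C_n^\vee)(\chi_0)\;\simeq\;\bigl((W\otimes T_p(F_n))(\chi_0)^*\bigr)_{G_\F}.
\]

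The crucial functorial input is that Fitting ideals commute with base change along the surjection $\pi_{G_\F}\colon W[\G_n][[G_\F]]\twoheadrightarrow W[\G_n]$ given by $\g\mapsto 1$; this applies here because $W[\G_n][[G_\F]]\cong W[\G_n][[T]]$ (with $T=\g-1$) is Noetherian, and $T_p(F_n)^*$ is finitely generated over $\Z_p$, hence finitely presented over $W[\G_n][[G_\F]]$. Applying this base change to the formula
\[
\Fitt_{W[\G_n][[G_\F]]}\bigl((W\otimes T_p(F_n))(\chi_0)^*\bigr)
= \frac{\Theta_n(\g^{-1},\chi_0)}{1-\g^{-1}}\Bigl(1,\tfrac{n(\G_n)}{\nu_d}\Bigr),
\]
and using that $\nu_d=1+\g^{-1}+\cdots+\g^{-(d-1)}$ specializes to $d$ under $\g\mapsto 1$, yields the desired expression
$\tfrac{\Theta_n(X,\chi_0)}{1-X}\big|_{X=1}\bigl(1,\tfrac{n(\G_n)}{d}\bigr)$.

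The main obstacle is making the specialization rigorous. The ratio $\Theta_n(\g^{-1},\chi_0)/(1-\g^{-1})$ is of formal type $0/0$ at $\g^{-1}=1$: the factor $1-\g^{-1}$ comes from the $\chi_0$-part of $H_{\infty,n}$ (Lemma \ref{FittHv}(2)) and is cancelled by a corresponding vanishing factor coming from the Euler product defining $\Theta_n(\g^{-1},\chi_0)$, so $\Theta_n(X,\chi_0)/(1-X)$ is meaningful at $X=1$ as an element of the function field of $W[\G_n][X]$. Similarly, the expression $n(\G_n)/\nu_d$ requires $\nu_d$ to be invertible after $\pi_{G_\F}$, i.e.\ $d$ to be a unit: this is precisely why the result is stated after tensoring with $Q(W)$, since $d$ may fail to be a unit in $W$ when $p\mid d$ but is always a unit in its fraction field. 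Once this is taken care of, the computation of $\pi_{G_\F}$ on each factor of the Fitting ideal is immediate, and the corollary follows.
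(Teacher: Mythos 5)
Your proposal is essentially the paper's own (implicit) argument: the paper derives the corollary in a single stroke from the isomorphism \eqref{GPFromTpToCn} and the $\chi_0$-part of Proposition \ref{FittTFn}, exactly via the base-change property of Fitting ideals along $\pi_{G_\F}\colon W[\G_n][[G_\F]]\twoheadrightarrow W[\G_n]$, which sends $\nu_d\mapsto d$ and $\Theta_n(\g^{-1},\chi_0)/(1-\g^{-1})\mapsto\Theta_n(X,\chi_0)/(1-X)|_{X=1}$.

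One small caveat on your reading of the role of $Q(W)$. The element $\frac{\Theta_n(\g^{-1},\chi_0)\,n(\G_n)}{(1-\g^{-1})\nu_d}$ already lies in $W[\G_n][[G_\F]]$ (the displayed formula in Proposition \ref{FittTFn} is a genuine ideal of that ring), and $\pi_{G_\F}$ of a ring element is again a ring element; since $d$ is a non-zero-divisor in $W[\G_n]$, the identity $\frac{\Theta_n(X,\chi_0)}{1-X}\big|_{X=1}\cdot\frac{n(\G_n)}{d}\in W[\G_n]$ holds automatically, so non-invertibility of $d$ is not by itself what forces the passage to $Q(W)$. The extension of scalars is better attributed to the fact that the rank-counting argument used to establish exactness of the resolutions \eqref{ResHinfty}--\eqref{ResHpr} for the $\chi_0$-component, and hence the applicability of \cite[Lemma 2.4]{GP2}, is cleanest after inverting $p$; this is a cosmetic difference that does not affect the structure of your argument, which otherwise matches the paper's.
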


\subsection{Fitting ideals for Tate modules (II): infinite level}\label{SecFittInf}
Consider the Iwasawa tower $\mathcal{F}/F$ and let $\varphi^{n+1}_n:X_{F_{n+1}}\rightarrow X_{F_n}$ be the morphism of
curves corresponding to the field extension $F_{n+1}/F_n$; it is a $\G^{n+1}_n:=\Gal(F_{n+1}/F_n)$ Galois cover
totally ramified at $\pr$. As before $\chi$ denotes an element of $\Hom(\Delta,W^*)$.

\noindent We have morphism $i^n_{n+1}:T_p(F_n)\hookrightarrow T_p(F_{n+1})$ (induced by the natural map from
$\ov{C}_n$ to $\ov{C}_{n+1}$) and a map $N^{n+1}_n:T_p(F_{n+1})\rightarrow T_p(F_n)$ induced by the norm map
from $\ov{C}_{n+1}$ to $\ov{C}_n$. Observe that, for any $n$, $N^{n+1}_n\circ i^n_{n+1}=q^d$.

\noindent In this section we shall meet various other maps induced by norms (resp. inclusions) on different modules/objects:
by abuse of notations we shall denote all of them by $N^{n+1}_n$ (resp. $i^n_{n+1}$), when we need some distinction
between them we shall write $N(\bullet)^{n+1}_n$ (resp. $i(\bullet)^n_{n+1}$) to denote the map defined on the
objects $\bullet$ or $T_p(\bullet)$.

\subsubsection{Norm and inclusion maps}
We have an inclusion $i^n_{n+1}:T_p(\calm_{\ov{S}_n})\iri T_p(\calm_{\ov{S}_{n+1}})$
such that $T_p(\calm_{\ov{S}_{n+1}})^{\G^{n+1}_n}=i^n_{n+1}(T_p(\calm_{\ov{S}_n}))$ by \cite[Theorem 3.1]{GP1}.
We also have a natural norm map
$N(\calm)^{n+1}_n:T_p(\calm_{\ov{S}_{n+1}})\rightarrow T_p(\calm_{\ov{S}_n})$.

\begin{lem}\label{LemN(calm)} The norm map $N(\calm)^{n+1}_n$ is surjective and its kernel is
$I_{\G^{n+1}_n} T_p(\calm_{\ov{S}_{n+1}})$ where $I_{\G^{n+1}_n}$ is the augmentation ideal (i.e., generated by
$\{\sigma-1\,:\,\sigma\in\G^{n+1}_n\}$).
\end{lem}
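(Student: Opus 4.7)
Set $G:=\G^{n+1}_n$ for brevity. The norm map $N(\calm)^{n+1}_n$ is, by construction, multiplication by $\sum_{\sigma\in G}\sigma$ on $T_p(\calm_{\ov{S}_{n+1}})$, so automatically
\[ I_G\cdot T_p(\calm_{\ov{S}_{n+1}})\subseteq \ker N(\calm)^{n+1}_n \quad\text{and}\quad \mathrm{Im}\,N(\calm)^{n+1}_n\subseteq T_p(\calm_{\ov{S}_{n+1}})^G. \]
By \cite[Theorem 3.1]{GP1}, quoted just above the lemma, the inclusion $i^n_{n+1}$ identifies $T_p(\calm_{\ov{S}_n})$ with $T_p(\calm_{\ov{S}_{n+1}})^G$. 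Hence the two assertions of the lemma become, respectively, the vanishings
\[ \hat H^{0}(G,T_p(\calm_{\ov{S}_{n+1}}))=0\quad\text{and}\quad\hat H^{-1}(G,T_p(\calm_{\ov{S}_{n+1}}))=0 \]
of Tate cohomology groups.

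The plan is to deduce both simultaneously from the stronger statement that $T_p(\calm_{\ov{S}_{n+1}})$ is $G$-cohomologically trivial. Since this module is $\Z_p$-free, the criterion recalled in the paper's proof of Proposition \ref{FittTFn} (namely \cite[Proposition 2.2 and Lemma 2.3]{Po}) reduces this to showing that $T_p(\calm_{\ov{S}_{n+1}})$ has projective dimension at most $1$ over $\Z_p[G]$. This is essentially the content of (the proof of) \cite[Theorem 3.9]{GP1}, applied to the Galois cover $X_{F_{n+1}}\to X_{F_n}$ of group $G$, which is totally ramified at $\pr$: this total ramification is precisely the hypothesis needed in the Greither--Popescu argument for the Tate module of the associated Deligne 1-motive to be cohomologically trivial.

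Once cohomological triviality is established, the vanishing $\hat H^{-1}=0$ upgrades the inclusion $I_G T_p(\calm_{\ov{S}_{n+1}})\subseteq \ker N(\calm)^{n+1}_n$ to an equality, while $\hat H^{0}=0$ says that $\mathrm{Im}\,N(\calm)^{n+1}_n$ fills the fixed submodule $T_p(\calm_{\ov{S}_{n+1}})^G\simeq T_p(\calm_{\ov{S}_n})$, yielding surjectivity.

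The main obstacle is the cohomological triviality itself, which is not at all formal: it relies on the fine structure of the 1-motive $\calm_{\ov{S}_{n+1}}$ and crucially on the fact that $\pr$ is totally ramified in $F_{n+1}/F_n$ (so there is a unique place of $F_{n+1}^{ar}$ above each place of $F_n^{ar}$ lying over $\pr$). Fortunately this fits precisely the framework of \cite{GP1}, so the result can be invoked directly. A more hands-on alternative would apply the snake lemma to the defining sequence $0\to T_p(F_{n+1})\to T_p(\calm_{\ov{S}_{n+1}})\to D_{n+1}\to 0$, combined with an explicit analysis of the norm map on the divisor side (straightforward from the ramification of $\pr$ and the splitting behavior of $\infty$) and on the Jacobian side (using surjectivity of the trace map on $\ov\F$-points of $Jac(X_{F_{n+1}})$); however, the kernel computation would still reduce to the same cohomological input, so the direct route via \cite{GP1} is more economical.
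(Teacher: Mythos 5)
Your argument is essentially identical to the paper's proof: recast the two claims as the vanishing of the Tate cohomology groups $\hat H^{0}$ and $\hat H^{-1}$ and deduce both from the $\G^{n+1}_n$-cohomological triviality of $T_p(\calm_{\ov{S}_{n+1}})$, which is exactly what \cite[Theorem 3.9]{GP1} (its proof) provides, with \cite[Theorem 3.1]{GP1} supplying the identification of the fixed submodule with $T_p(\calm_{\ov{S}_n})$. Two side remarks in your write-up are inaccurate but harmless: the Greither--Popescu cohomological triviality does not depend on $\pr$ being totally ramified (it holds for any geometric Galois cover and any nonempty $S$ containing the ramified places), and the \cite[Proposition 2.2 and Lemma 2.3]{Po} citation appearing in the proof of Proposition \ref{FittTFn} is used there to pass from cohomological triviality (plus absence of finite submodules) to projective dimension $\leq 1$, not in the converse direction you sketch; since you ultimately invoke \cite[Theorem 3.9]{GP1} directly, the argument stands.
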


\begin{proof} By \cite[Theorem 3.9]{GP1} (in particular, its proof) we have that $T_p(\calm_{\ov{S}_{n+1}})$ is
$\G^{n+1}_n$-co\-ho\-mo\-lo\-gi\-cal\-ly trivial, i.e.,
\[ \wh{H}^i(\G^{n+1}_n,T_p(\calm_{\ov{S}_{n+1}}))=0 \quad\forall\,i \ .\]
For $i=0$ we have that
\[ T_p(\calm_{\ov{S}_{n+1}})^{\G^{n+1}_n}=N(\calm)^{n+1}_n(T_p(\calm_{\ov{S}_{n+1}})) \ ,\]
but, as recalled above, $T_p(\calm_{\ov{S}_{n+1}})^{\G^{n+1}_n}=T_p(\calm_{\ov{S}_n})$,
therefore the norm map is surjective.

\noindent With $i=-1$ we obtain that the kernel of $N(\calm)^{n+1}_n$ is given by the augmentation module
$I_{\G^{n+1}_n} T_p(\calm_{\ov{S}_{n+1}})$.
\end{proof}

\noindent We have a commutative diagram of short exact sequences
\begin{equation}\label{DiagNorm1}
\xymatrix {0 \ar[r] & T_p(F_{n+1}) \ar[r] \ar[d]^{N(F)^{n+1}_n} & T_p(\calm_{\ov{S}_{n+1}})\ar[r] \ar[d]^{N(\calm)^{n+1}_n} &
D_{n+1} \ar[r] \ar[d]^{N(D)^{n+1}_n} \ar[r] & 0 \\
0 \ar[r] & T_p(F_n) \ar[r] & T_p(\calm_{\ov{S}_n}) \ar[r] & D_n \ar[r] & 0 }
\end{equation}
where all vertical maps are induced by norms: in particular note that $N(D)^{n+1}_n$ corresponds to the natural
map on divisors
\[ \Z_p[F_{n+1}^{ar}(\infty)]\oplus\Z_p[F_{n+1}^{ar}(\pr)]\rightarrow
\Z_p[F_{n}^{ar}(\infty)]\oplus\Z_p[F_{n}^{ar}(\pr)] \ .\]

\begin{lem}\label{LemN(L)} Let $\chi\neq\chi_0$ and $n\geq 1$. Then
\[ \Ker(N(D)^{n+1}_n)(\chi)=\left\{\begin{array}{cl}
0 &{\rm \ if}\ \chi{\rm \ is\ odd}\\
\ & \\
I_{\G^{n+1}_n} (W\otimes D_{n+1})(\chi) &{\rm \ if}\ \chi{\rm\ is\ even} \\
\end{array}\right. \ . \]
Moreover, the map $N(D)^{n+1}_n$ is surjective.
\end{lem}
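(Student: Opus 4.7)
The plan is to reduce everything to natural projection maps between the $\Z_p[G_n]$-modules $H_{v,n} = \Z_p[F_n^{ar}(v)]$ for $v \in S$, using the identifications in Remark \ref{IsoHv} and interpreting the norm on divisors as the geometric pushforward along $\pi \colon X_{F_{n+1},\ov\F} \to X_{F_n,\ov\F}$. Since the residue degree between $\ov\F$-points is trivial, $\pi_\ast$ sends an $\ov\F$-point $\ov w$ to $\pi(\ov w)$ with multiplicity $1$, so the norm $H_{v,n+1} \to H_{v,n}$ is the $\Z_p$-linear extension of the induced map of $G_n$-sets $F_{n+1}^{ar}(v) \to F_n^{ar}(v)$.

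For $v = \pr$, total ramification of $\pr$ in $F_{n+1}/F_n$ makes $F_{n+1}^{ar}(\pr) \to F_n^{ar}(\pr)$ a bijection, so the norm on $H_\pr$ is an isomorphism. For $v = \infty$, the identification $F_n^{ar}(\infty) \simeq G_n/I_{\infty,n}$ (implicit in Remark \ref{IsoHv}, and valid since $I_{\infty,n} = \F^*$ is constant along the tower) turns the norm into the canonical surjection $\Z_p[G_{n+1}/I_\infty] \twoheadrightarrow \Z_p[G_n/I_\infty]$, whose kernel is the augmentation ideal $I_{\G^{n+1}_n} \Z_p[G_{n+1}/I_\infty]$.

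With these identifications the lemma is immediate. For $\chi \neq \chi_0$, \eqref{ChiHpr} gives $(W \otimes H_{\pr,n})(\chi) = 0$, so $(W \otimes D_n)(\chi) \simeq (W \otimes H_{\infty,n})(\chi)$ and the norm is governed entirely by the $\infty$-component. If $\chi$ is odd, both sides vanish by \eqref{ChiHinfty} and the kernel is trivially $0$. If $\chi$ is even and non-trivial, \eqref{ChiHinfty} identifies $(W \otimes D_n)(\chi) \simeq W[\G_n]$ compatibly with norms, so the norm becomes the canonical projection $W[\G_{n+1}] \twoheadrightarrow W[\G_n]$, whose kernel is $I_{\G^{n+1}_n} W[\G_{n+1}] = I_{\G^{n+1}_n}(W\otimes D_{n+1})(\chi)$, as claimed. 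Surjectivity of the full map $N(D)^{n+1}_n$ follows because the map $\Z_p[\ov S_{n+1}] \to \Z_p[\ov S_n]$ is surjective (by the first two paragraphs) and preserves degrees (being a pushforward of divisors), so it restricts to a surjection of degree-zero divisors $D_{n+1} \twoheadrightarrow D_n$; equivalently, one checks the $\chi_0$-part via \eqref{ChiLnd}, where the first summand is reached by the isomorphism on $H_\pr$ and the second by the projection on $H_\infty$.

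The main (and essentially only) obstacle is the geometric identification of the algebraic norm with the map of $G_n$-sets $F_{n+1}^{ar}(v) \to F_n^{ar}(v)$; once this routine verification is done, everything reduces to the explicit $\chi$-component computations of Section \ref{ChiPartsL}.
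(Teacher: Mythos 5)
Your proof is correct. The kernel computation — which is the main content of the lemma — follows essentially the same route as the paper: reduce to the explicit $\chi$-component descriptions in Section \ref{ChiPartsL}, note $(W\otimes H_{\pr,n})(\chi)=0$ for $\chi\neq\chi_0$, and identify the norm on the non-trivial even component with the canonical projection $W[\G_{n+1}]\twoheadrightarrow W[\G_n]$. The one place you diverge is the surjectivity claim. The paper deduces it from the surjectivity of $N(\calm)^{n+1}_n$ (Lemma \ref{LemN(calm)}), which in turn rests on the $\G^{n+1}_n$-cohomological triviality of $T_p(\calm_{\ov{S}_{n+1}})$ imported from Greither--Popescu, via a diagram chase in \eqref{DiagNorm1}. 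You instead verify surjectivity directly on $\Z_p[\ov{S}_{n+1}]\to\Z_p[\ov{S}_n]$ — an isomorphism on the $\pr$-component by total ramification, the coset projection on the $\infty$-component — and then use that this pushforward commutes with the degree map (residue degrees over $\ov\F$ being $1$), so it restricts to a surjection $D_{n+1}\twoheadrightarrow D_n$. Your route is more elementary and self-contained at this step; its modest cost is having to make explicit the identification of the algebraic norm with the geometric pushforward and with the coset projection $G_{n+1}/I_{\infty,n+1}\to G_n/I_{\infty,n}$, which the paper leaves implicit behind the phrase ``the norm corresponds to the projection.''
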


\begin{proof} The last assertion immediately follows from the surjectivity of $N(\calm)^{n+1}_n$ and the snake lemma
sequence of diagram \eqref{DiagNorm1}. Now consider the same diagram but with $\chi$-parts and tensored with $W$ and
note that the maps $N(\calm)^{n+1}_n$ and $N(D)^{n+1}_n$ remain surjective.\\
From the computations in Section \ref{ChiPartsL} the case $\chi$ odd is obvious. When $\chi\neq\chi_0$ is even we have
$e_{\chi}(W\otimes D_{n+1})\simeq W[\G_{n+1}]$ and the norm corresponds to the projection
$W[\G_{n+1}]\rightarrow W[\G_n]$ which has kernel $I_{\G^{n+1}_n}$.
\end{proof}

The previous two lemmas lead to similar statements for the map $N(F)^{n+1}_n(\chi)$.

\begin{prop}\label{CohoTriv} Let $\chi\neq\chi_0$, then $(W\otimes T_p(F_n))(\chi)$ is $\G_n$ and
$\G^{n+1}_n$-cohomologically trivial and a free $W[\G_n]$-module. In particular
\[ (W\otimes T_p(F_{n+1}))(\chi)^{\G^{n+1}_n}=(W\otimes T_p(F_n))(\chi)\ ,\]
$N(F)^{n+1}_n(\chi)$ is surjective and
$\Ker(N(F)^{n+1}_n)(\chi)=I_{\G^{n+1}_n}(W\otimes T_p(F_{n+1}))(\chi)$.
Moreover it is also a $W[\G_n][[G_\F]]$-module of projective dimension less than or equal to one.
\end{prop}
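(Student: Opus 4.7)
The plan is to combine the short exact sequence \eqref{SeqL} with the $\chi$-wise description \eqref{ChiLnd} of $(W\otimes D_n)(\chi)$ and the cohomological triviality of $(W\otimes T_p(\calm_{\ov{S}_n}))(\chi)$ already noted in the proof of Proposition \ref{FittTFn} (which rests on \cite[Theorem 3.9]{GP1}). For $\chi\neq\chi_0$, $(W\otimes D_n)(\chi)$ is either $0$ or $W[\G_n]$, hence in both cases $\G_n$-cohomologically trivial; the long exact Tate cohomology sequence attached to the $\chi$-part of \eqref{SeqL} then forces $(W\otimes T_p(F_n))(\chi)$ to be $\G_n$-cohomologically trivial. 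Running the same argument at level $n+1$ and restricting to the subgroup $\G^{n+1}_n\subset\G_{n+1}$ (cohomological triviality is inherited by subgroups) also gives $\G^{n+1}_n$-cohomological triviality of $(W\otimes T_p(F_{n+1}))(\chi)$.

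Since $T_p(F_n)$ is $\Z_p$-free, $(W\otimes T_p(F_n))(\chi)$ is $W$-free. Combined with $\G_n$-cohomological triviality, this makes it a projective $W[\G_n]$-module, hence free: the ring $W[\G_n]$ is local because $\G_n$ is a pro-$p$ group and $W$ is complete local with residue field of characteristic $p$. For the projective dimension over $W[\G_n][[G_\F]]$, I would invoke \cite[Proposition 2.2 and Lemma 2.3]{Po} exactly as in the proof of Proposition \ref{FittTFn}: $\Z_p$-freeness together with $\G_n$-cohomological triviality yields projective dimension at most one.

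For the ``in particular'' part I would apply the snake lemma to the diagram \eqref{DiagNorm1} restricted to $\chi$-parts and tensored with $W$. By Lemmas \ref{LemN(calm)} and \ref{LemN(L)}, the central and right-hand vertical maps are surjective with kernels equal to $I_{\G^{n+1}_n}$ times the upper module, so the snake sequence collapses to
\[ 0 \to \Ker N(F)^{n+1}_n(\chi) \to I_{\G^{n+1}_n}(W\otimes T_p(\calm_{\ov{S}_{n+1}}))(\chi) \to I_{\G^{n+1}_n}(W\otimes D_{n+1})(\chi) \to 0 , \]
giving in particular surjectivity of $N(F)^{n+1}_n(\chi)$. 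On the other hand, multiplying the $\chi$-part of \eqref{SeqL} at level $n+1$ by $I_{\G^{n+1}_n}$ produces an exact sequence with the same two right-hand terms and $I_{\G^{n+1}_n}(W\otimes T_p(F_{n+1}))(\chi)$ in place of $\Ker N(F)^{n+1}_n(\chi)$; comparing the two identifies the kernel as $I_{\G^{n+1}_n}(W\otimes T_p(F_{n+1}))(\chi)$. Finally, the equality $(W\otimes T_p(F_{n+1}))(\chi)^{\G^{n+1}_n} = (W\otimes T_p(F_n))(\chi)$ follows from $\G^{n+1}_n$-cohomological triviality: $\wh H^0=0$ gives $M^{\G^{n+1}_n}=N_{\G^{n+1}_n} M$, and the standard identity $i^n_{n+1}\circ N(F)^{n+1}_n = N_{\G^{n+1}_n}$ on Tate modules combined with the surjectivity of $N(F)^{n+1}_n(\chi)$ just established identifies this with $i^n_{n+1}\bigl((W\otimes T_p(F_n))(\chi)\bigr)$.

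The main obstacle I anticipate is the careful verification of exactness of the ``$I_{\G^{n+1}_n}$-scaled'' sequence — particularly left-injectivity, which relies on the injectivity of $T_p(F_{n+1})\hookrightarrow T_p(\calm_{\ov{S}_{n+1}})$ — together with the compatibility between the tower-theoretic norm $N(F)^{n+1}_n$ and the group-theoretic norm $N_{\G^{n+1}_n}$, which is what ties the snake lemma computation to the identification of $\G^{n+1}_n$-invariants.
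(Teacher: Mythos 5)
Your proof of cohomological triviality, $W[\G_n]$-freeness, and projective dimension $\leq 1$ follows the paper's route almost verbatim (short exact sequence \eqref{SeqL}, cohomological triviality of the two outer $\chi$-parts, locality of $W[\G_n]$, and the criterion of Popescu); that part is sound and essentially identical.

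For the ``in particular'' clause you deviate from the paper, replacing a direct Tate cohomology argument by a snake-lemma chase on \eqref{DiagNorm1} and a comparison with the ``$I_{\G^{n+1}_n}$-scaled'' copy of \eqref{SeqL}. There is a genuine gap here, and you have also misidentified where it lies. Left-injectivity of $I_{\G^{n+1}_n}(W\otimes T_p(F_{n+1}))(\chi)\hookrightarrow I_{\G^{n+1}_n}(W\otimes T_p(\calm_{\ov{S}_{n+1}}))(\chi)$ is automatic, since the former is contained in $(W\otimes T_p(F_{n+1}))(\chi)$ and the ambient inclusion is injective. What is \emph{not} automatic is exactness in the middle: you need
\[ I_{\G^{n+1}_n}(W\otimes T_p(F_{n+1}))(\chi) \;=\; \bigl(I_{\G^{n+1}_n}(W\otimes T_p(\calm_{\ov{S}_{n+1}}))(\chi)\bigr) \cap (W\otimes T_p(F_{n+1}))(\chi) , \]
and this is the nontrivial statement — the containment $\subseteq$ is clear, but equality is exactly the content of $\wh{H}^{-1}\bigl(\G^{n+1}_n,(W\otimes T_p(F_{n+1}))(\chi)\bigr)=0$, or equivalently of the flatness of $(W\otimes D_{n+1})(\chi)$ as $W[\G^{n+1}_n]$-module, neither of which you invoke. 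Since you have already established $\G^{n+1}_n$-cohomological triviality, the cleanest repair is the paper's own: observe that $i^n_{n+1}\circ N(F)^{n+1}_n$ is the group-theoretic norm $N_{\G^{n+1}_n}$, so $\Ker N(F)^{n+1}_n(\chi)=\Ker N_{\G^{n+1}_n}$ (by injectivity of $i^n_{n+1}$), and $\wh{H}^{-1}=0$ identifies the latter with $I_{\G^{n+1}_n}(W\otimes T_p(F_{n+1}))(\chi)$ directly. Your treatment of the invariants statement via $\wh{H}^{0}=0$ and the norm factorization is correct and in fact parallels how the paper closes this step; it is only the kernel identification that is under-argued as written.
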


\begin{proof} Consider the short exact sequence
\[ 0\rightarrow W\otimes T_p(F_n)\rightarrow W\otimes T_p(\calm_{\ov{S}_n}) \rightarrow W\otimes D_n\rightarrow 0 \ .\]
Since $D_n(\chi)$ is $0$ or $W[\G_n]$, it is $W[\G_n]$-free and cohomologically trivial, while $W\otimes T_p(\calm_{\ov{S}_n})$ is
also $W[\G_n]$-free and cohomologically trivial by \cite[Theorem 3.9]{GP1}. Thus $(W\otimes T_p(F_n))(\chi)$ is
projective over $W[\G_n]$ and cohomologically trivial. Now, since $\G_n$ is a $p$-group, $W[\G_n]$ is a local ring and
projective modules coincide with free modules. The cohomological triviality over $\G^{n+1}_n$
is similar and straightforward.

\noindent The assertion on $\Ker(N(F)^{n+1}_n)(\chi)$ comes from the triviality of the $\wh{H}^1$.
Now take $\G^{n+1}_n$-invariants in the sequence for even characters (for odd ones there is nothing to prove)
\[(W\otimes T_p(F_{n+1}))(\chi) \iri (W\otimes T_p(\calm_{\ov{S}_{n+1}}))(\chi) \sri (W\otimes D_{n+1})(\chi) \simeq W[\G_{n+1}] \ , \]
to get
\[(W\otimes T_p(F_{n+1}))(\chi)^{\G^{n+1}_n} \iri (W\otimes T_p(\calm_{\ov{S}_n}))(\chi) \sri (W\otimes D_n)(\chi) \simeq W[\G_n] \]
(using Lemmas \ref{LemN(calm)} and \ref{LemN(L)}). Thus $(W\otimes T_p(F_{n+1}))(\chi)^{\G^{n+1}_n}=(W\otimes T_p(F_n))(\chi)$
and, since the $\wh{H}^0$ is trivial, $N(F)^{n+1}_n(\chi)$ is surjective.

\noindent The last statement of the lemma follows from \cite[Proposition 2.2 and Lemma 2.3]{Po} (see also
\cite[Proposition 5.3]{DoPo}) because $(W\otimes T_p(F_n))(\chi)$, being free, has no nontrivial finite $W[\G_n]$-submodule.
\end{proof}

\begin{rem}\label{RemChi0} For $\chi=\chi_0$ the modules $(W\otimes T(\calm_{\ov{S}_n}))(\chi_0)$ and $W[\G_n]$ are
still $\G_n$-co\-ho\-mo\-lo\-gi\-cal\-ly trivial and we have the short exact sequence
\[ (W\otimes T_p(F_n))(\chi_0)\iri (W\otimes T(\calm_{\ov{S}_n}))(\chi_0)\sri \frac{(1-\g^{-1})W[[G_\F]]}{(1-\g^{-d})}\oplus W[\G_n] \ .\]
Since $F_n/F$ is disjoint from $\ov{\F}/\F$, the norm acts on $G_\F$ as multiplication by $[F_n:F]$. For any
subextension $E/K$ and for any $k\in \Z$, we have
\[ \wh{H}^k\left(\Gal(E/K),\frac{(1-\g^{-1})W[[G_\F]]}{(1-\g^{-d})}\right)\simeq
\wh{H}^{k+1}(\Gal(E/K),(W\otimes T_p(F_n))(\chi_0)) \]
and, in particular,
\[ \wh{H}^0(\Gal(E/K),(W\otimes T_p(F_n))(\chi_0)) \! \simeq \!
\frac{(1-\g^{-1})W[[G_\F]]}{(1-\g^{-d})}/|\Gal(E/K)|\frac{(1-\g^{-1})W[[G_\F]]}{(1-\g^{-d})} \,.\]
Therefore $(W\otimes T_p(F_n))(\chi_0)$ is not $\G_n$-cohomologically trivial (and not not necessarily $W[\G_n]$-free).
\end{rem}

\subsubsection{Computation of Fitting ideals (II)}
We define
\[ T_p(\calf)(\chi) :=\plim{n} (W\otimes T_p(F_n))(\chi) \]
(the limit is on the norm maps studied above). We recall that $T_p(F_n)=T_p(Jac(X_{F_n})(\ov{\F}))$,
so $T_p(\calf)(\chi)$ is a $W[[\G]][[G_\F]]$-module.

\begin{prop}\label{FinGen1} For $\chi\neq\chi_0$,
$T_p(\calf)(\chi)$ is a finitely generated torsion $W[[\G]][[G_\F]]$-module.
\end{prop}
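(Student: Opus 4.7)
The plan is to prove the two assertions separately, both by leveraging the finite-level information already established. Finite generation will follow from topological Nakayama; torsion will follow from producing a nonzerodivisor annihilator via the Stickelberger elements.

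For finite generation, I would apply the topological Nakayama lemma to the compact module $T_p(\calf)(\chi)$ over the compact local ring $\Lambda[[G_\F]]$, whose maximal ideal contains $p$, the augmentation ideal $\gotI_0$ of $\G$, and $\g-1$. Proposition \ref{CohoTriv} guarantees that every norm map $N(F)^{n+1}_n(\chi)$ in the inverse system defining $T_p(\calf)(\chi)$ is surjective. By Mittag--Leffler this yields a surjection $T_p(\calf)(\chi)\twoheadrightarrow (W\otimes T_p(F_0))(\chi)$ whose kernel contains $\gotI_0\, T_p(\calf)(\chi)$. Now $(W\otimes T_p(F_0))(\chi)$ is a direct summand of the free $W$-module $W\otimes T_p(F_0)$, which has finite rank $2g_0$. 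Reducing further modulo $p$ and $\g-1$ gives a finite-dimensional $\F_\pr$-vector space, so $T_p(\calf)(\chi)/\gotM T_p(\calf)(\chi)$ is a quotient of a finite set, hence finite, and Nakayama gives the finite generation.

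For the torsion statement, I would produce a concrete annihilator out of the Stickelberger elements. By Proposition \ref{FittTFn}, the element $\Theta_n^{\#}(\g^{-1},\chi)$ lies in $\Fitt_{W[\G_n][[G_\F]]}((W\otimes T_p(F_n))(\chi))$, hence annihilates it. The compatibility $\pi^{n+1}_n(\Theta_{n+1}^{\#}(\g^{-1},\chi))=\Theta_n^{\#}(\g^{-1},\chi)$ follows immediately from the Euler product description together with \eqref{EqChiTheta}, so these elements glue to an element $\Theta_\infty^{\#}(\g^{-1},\chi)\in\Lambda[[G_\F]]$ which annihilates $T_p(\calf)(\chi)$.

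The main obstacle is to verify that this annihilator is a nonzerodivisor in the non-noetherian ring $\Lambda[[G_\F]]$. My strategy is to reduce modulo $p$: one has $\Lambda[[G_\F]]/p\Lambda[[G_\F]]\simeq\F_\pr[[\G]][[G_\F]]$, and since $\G\simeq\Z_p^\infty$ (a product of countably many copies of $\Z_p$), this is a power series ring in countably many variables over $\F_\pr$, hence an integral domain. It is therefore sufficient to prove that $\Theta_\infty^{\#}(\g^{-1},\chi)\not\equiv 0\pmod{p}$. At each finite level $\Theta_n^{\#}(X,\chi)$ is, by Weil's theorem, a nonzero polynomial in $W[\G_n][X]$; the point to check is that its reduction modulo $p$ does not vanish in the limit, which is a mild form of non-vanishing in the spirit of Ferrero--Washington and is later sharpened in Theorem \ref{IntroThmmu=0}.
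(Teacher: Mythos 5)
Your overall strategy for finite generation matches the paper's (pass to a finite-level quotient and invoke topological Nakayama), and your observation that the Stickelberger elements glue to an annihilator is exactly what the paper says for the torsion assertion. However, there are genuine errors concerning the structure of the ring $\Lambda[[G_\F]]$, and the nonzerodivisor argument you sketch is both gapped and circular.

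First, $\Lambda[[G_\F]]$ is \emph{not} a local ring. Since $G_\F\simeq\wh\Z$ has nontrivial prime-to-$p$ part, the completed group ring $\F_\pr[[G_\F]]=\varprojlim_m\F_\pr[\Z/m]$ contains nontrivial idempotents: for a prime $\ell\neq p$ the elements $\ell^{-n}\sum_{g\in\Z/\ell^n}g$ are compatible under the transition maps and yield an idempotent different from $0$ and $1$. In particular the quotient $\Lambda[[G_\F]]/p\Lambda[[G_\F]]$ is \emph{not} an integral domain, contrary to what you assert; it is only the factor $\F_\pr[[\G]]$ (with $\G\simeq\Z_p^\infty$) that is a power series ring in countably many variables. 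The same idempotents show that $\g-1$ is a unit in some residue fields of $\Lambda[[G_\F]]$, hence $\g-1$ does \emph{not} lie in the Jacobson radical. This means you cannot apply Nakayama with an ideal containing $\g-1$. The paper avoids this by using only the ideals $\gotI_n$: since $\G^{(n)}$ is pro-$p$, the generators $\g-1$ ($\g\in\G^{(n)}$) are topologically nilpotent, so $\gotI_n\Lambda[[G_\F]]$ genuinely sits in the Jacobson radical, and the quotient $T_p(\calf)(\chi)/\gotI_n T_p(\calf)(\chi)\simeq(W\otimes T_p(F_n))(\chi)$ is finitely generated over $W[\G_n][[G_\F]]$, which is what the generalized Nakayama lemma requires. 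Your argument can be repaired simply by dropping $\g-1$ (and even $p$) from the ideal.

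Second, the nonzerodivisor claim is not what the paper establishes here, and your route to it is circular. The paper's proof of torsion is limited to observing that $\Theta_\infty^\#(\g^{-1},\chi)$ annihilates $T_p(\calf)(\chi)$; it does not attempt to prove that this element is a nonzerodivisor of $\Lambda[[G_\F]]$ at this point. Your appeal to Theorem \ref{IntroThmmu=0} (= Theorem \ref{Sticnon0modp}) as the source of $\Theta_\infty^\#\not\equiv0\pmod p$ inverts the logical order of the paper: that theorem is proved in Section \ref{SecArithAppl} by combining the Iwasawa Main Conjecture (Section \ref{SecIMC}) with Sinnott's map and the $\pr$-adic $L$-function, and the IMC itself rests on Proposition \ref{FinGen1} and Theorem \ref{FittCalmCalf}. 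Also note that you would need non-vanishing of $\Theta_\infty^\#(\g^{-1},\chi)$ mod $p$, while the later theorem concerns the specialization at $X=1$; a nonzero statement (over $W$, not mod $p$) follows already from Weil's theorem at any finite level, but even granting the mod-$p$ non-vanishing, the fact that $\Lambda[[G_\F]]/p$ is not a domain means your reduction argument would not deliver nonzerodivisor-ness.
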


\begin{proof} The ideals $\gotI_n$ (defined in Section \ref{SecIwaAlg}) form an open filtration for $W[[\G]]$ and we note that
$\gotI_n=\plim{m}I_{\G^{m+n}_n}$ (where $I_{\G^{m+n}_n}$ denotes the augmentation ideal associated to
$\G^{m+n}_n=\Gal(F_{n+m}/F_n)$).
From Proposition \ref{CohoTriv} we have that
\[ T_p(\calf)(\chi)/(W[[\G]][[G_\F]]\otimes_{W[[\G]]}\gotI_n)T_p(\calf)(\chi) \simeq (W\otimes T_p(F_n))(\chi) \]
for all $n\geq 1$. The module on the right is finitely generated over
\[ W[\G_n][[G_\F]]= W[[\G]][[G_\F]]/(W[[\G]][[G_\F]]\otimes_{W[[\G]]}\gotI_n) \ ,\]
so, by a generalized Nakayama Lemma (see \cite[Corollary p. 226]{BaHo}), we have that $T_p(\calf)(\chi)$ is finitely generated
as a $W[[\G]][[G_\F]]$-module. Moreover $\Theta_n^\#(\g^{-1},\chi)(W\otimes T_p(F_n))(\chi)=0$
for any $n$ (by Proposition \ref{FittTFn}), hence
\[ \Theta_\infty^\#(\g^{-1},\chi)T_p(\calf)(\chi)=0 \ ,\]
i.e., the module $T_p(\calf)(\chi)$ is torsion.
\end{proof}

\noindent Therefore the Fitting ideal of the $W[[\G]][[G_\F]]$-module $T_p(\calf)(\chi)$ is well defined
and we have the following formula for it (for a similar result see \cite[Theorem 2.1]{GrKu},
but note the particular case of \cite[Remark 2.2\,(2)]{GrKu} which fits our setting).

\begin{thm}\label{FittCalmCalf} For $\chi\neq\chi_0$ we have
\[ \Fitt_{W[[\G]][[G_\F]]}(T_p(\calf)(\chi))=(\Theta_{\infty}^{\#}(\g^{-1},\chi)) \ .\]
\end{thm}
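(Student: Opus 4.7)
My plan is to deduce the infinite-level statement from the finite-level computation of Proposition \ref{FittTFn} by a limit argument, using the fact that Fitting ideals are compatible with base change along surjective ring homomorphisms. The key input is that for the quotients $\Lambda[[G_\F]] \twoheadrightarrow W[\G_n][[G_\F]]$ (with kernel generated by the image of $\gotI_n$), the Iwasawa module $T_p(\calf)(\chi)$ reduces to $(W \otimes T_p(F_n))(\chi)$, whose Fitting ideal we already know.

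First I would establish the identification
\[ T_p(\calf)(\chi) \otimes_{W[[\G]][[G_\F]]} W[\G_n][[G_\F]] \simeq (W \otimes T_p(F_n))(\chi) \, . \]
This was already noted in the proof of Proposition \ref{FinGen1}: by Proposition \ref{CohoTriv}, each transition map $N(F)^{n+1}_n(\chi)$ is surjective with kernel $I_{\G^{n+1}_n}(W\otimes T_p(F_{n+1}))(\chi)$, and the cohomological triviality combined with the freeness of $(W\otimes T_p(F_n))(\chi)$ as a $W[\G_n]$-module guarantees that taking $\G^{(n)}$-coinvariants of the inverse limit recovers exactly the $n$-th level. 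In particular the limit is compatible with arbitrary base change across the projections $\pi_n\colon\Lambda[[G_\F]] \twoheadrightarrow W[\G_n][[G_\F]]$.

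Next I would apply the standard compatibility of Fitting ideals with surjective base change: for any ring surjection $R \twoheadrightarrow R/J$ and any finitely generated $R$-module $M$,
\[ \Fitt_{R/J}(M/JM) = \pi(\Fitt_R(M)) \, , \]
where $\pi$ is the induced map. Combining this with Proposition \ref{FittTFn} yields, for each $n$,
\[ \pi_n\bigl(\Fitt_{\Lambda[[G_\F]]}(T_p(\calf)(\chi))\bigr) = (\Theta_n^\#(\g^{-1},\chi)) \, . \]
Since $\Theta_\infty^\#(\g^{-1},\chi)$ is by construction the inverse limit $\varprojlim \Theta_n^\#(\g^{-1},\chi)$ under these same projections, the containment $(\Theta_\infty^\#(\g^{-1},\chi)) \subseteq \Fitt_{\Lambda[[G_\F]]}(T_p(\calf)(\chi))$ follows from the fact (in the proof of Proposition \ref{FinGen1}) that $\Theta_\infty^\#(\g^{-1},\chi)$ annihilates $T_p(\calf)(\chi)$, together with the principality observed at each finite level.

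The main obstacle will be the reverse inclusion, i.e., showing that an element of $\Lambda[[G_\F]]$ whose projection to each $W[\G_n][[G_\F]]$ lies in $(\Theta_n^\#(\g^{-1},\chi))$ must itself be a multiple of $\Theta_\infty^\#(\g^{-1},\chi)$. In the non-noetherian setting of $\Lambda[[G_\F]]$ one cannot invoke Mittag-Leffler freely; the argument will instead exploit that each finite-level ideal is principal with a distinguished generator $\Theta_n^\#(\g^{-1},\chi)$ and that these generators themselves form a coherent projective system. Concretely, given $\xi \in \Fitt_{\Lambda[[G_\F]]}(T_p(\calf)(\chi))$, at each level $n$ one writes $\pi_n(\xi) = c_n \, \Theta_n^\#(\g^{-1},\chi)$ with $c_n \in W[\G_n][[G_\F]]$, and the compatibility of the Stickelberger series under the projections (together with the fact that $\Theta_n^\#(\g^{-1},\chi)$ is not a zero-divisor modulo each $\gotM_k$, which can be checked via its image in the residue field using Theorem \ref{IntroThmmu=0}-type non-vanishing, or more elementarily using the Weierstrass-type preparation implicit in the polynomial nature of $\Theta_n(X,\chi)$ for $\chi \neq \chi_0$) will force the $c_n$ to form a compatible system, yielding $c_\infty \in \Lambda[[G_\F]]$ with $\xi = c_\infty \, \Theta_\infty^\#(\g^{-1},\chi)$. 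The compactness of $\Lambda[[G_\F]]$ and continuity of the relevant projections is what makes this limit argument go through.
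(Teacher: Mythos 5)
Your overall strategy — reduce to Proposition \ref{FittTFn} by base change of Fitting ideals and pass to the limit — is different from the paper's, which works with explicit finite free presentations of the modules $(W\otimes T_p(F_n))(\chi)$ and applies the snake lemma. The identification of $(W\otimes T_p(F_n))(\chi)$ as the $\gotI_n$-coinvariants of $T_p(\calf)(\chi)$ and the base-change formula $\pi_n(\Fitt_{\Lambda[[G_\F]]}(T_p(\calf)(\chi))) = (\Theta_n^\#(\g^{-1},\chi))$ are both correct and a good start. But the two inclusions you try to extract from this are not actually established.

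For $(\Theta_\infty^\#(\g^{-1},\chi)) \subseteq \Fitt_{\Lambda[[G_\F]]}(T_p(\calf)(\chi))$, you invoke the annihilation $\Theta_\infty^\#(\g^{-1},\chi)\cdot T_p(\calf)(\chi)=0$. That is not enough: annihilation is strictly weaker than Fitting ideal membership as soon as the module needs more than one generator, so the argument does not close. What you \emph{do} have is that $\pi_n(\Theta_\infty^\#(\g^{-1},\chi))\in\pi_n(\Fitt)$ for every $n$, but to upgrade this to $\Theta_\infty^\#(\g^{-1},\chi)\in\Fitt$ you need the Fitting ideal to be closed in the non-noetherian Iwasawa algebra, and you do not prove that. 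For the reverse inclusion your own sketch makes the issue visible: writing $\pi_n(\xi)=c_n\Theta_n^\#(\g^{-1},\chi)$ does not pin down $c_n$, and the compatibility $\pi^{n+1}_n(c_{n+1})=c_n$ does not follow from the compatibility of the $\Theta_n^\#$ unless $\Theta_n^\#(\g^{-1},\chi)$ is a non-zero-divisor in $W[\G_n][[G_\F]]$, which you neither prove nor have readily available (moreover, the appeal to Theorem \ref{IntroThmmu=0}, which lives with $\g^{-1}$ specialized to $1$ and sits downstream in the paper, would be a detour even if it applied). So the passage to a coherent system $(c_n)_n$ is a genuine gap, not a routine compactness argument.

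The paper resolves exactly this point differently. Using Proposition \ref{FinGen1}, one chooses a uniform number $r$ of generators and builds a compatible tower of short exact sequences
\[ 0\to B_{n}\to W[\G_n][[G_\F]]^r\to (W\otimes T_p(F_n))(\chi)\to 0\,. \]
Then the surjectivity of the norm map on Tate modules with kernel $I_{\G^{n+1}_n}(W\otimes T_p(F_{n+1}))(\chi)$ (Proposition \ref{CohoTriv}) and the snake lemma give surjectivity of the transition maps $b^{n+1}_n\colon B_{n+1}\to B_n$ on relation modules, hence of $B_\infty\twoheadrightarrow B_n$ after taking limits. This surjectivity is precisely what replaces your ``unique coherent $c_n$'' mechanism: every determinant of an $r\times r$ relation matrix at level $n$ is already realized as the reduction of a determinant at the infinite level, so $\Fitt_{\Lambda[[G_\F]]}(T_p(\calf)(\chi))$ is literally $\varprojlim(\det(B_n))=\varprojlim(\Theta_n^\#(\g^{-1},\chi))$, and one then identifies that inverse limit of ideals with the principal ideal generated by $\Theta_\infty^\#(\g^{-1},\chi)$. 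If you want to pursue your route rather than the paper's, you would need to prove either closedness of the Fitting ideal in $\Lambda[[G_\F]]$ or non-zero-divisibility of the finite-level Stickelberger elements; as it stands, these are the missing pieces.
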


\begin{proof}
By the previous proposition we can find an $r\in\N$ such that the
following diagram commutes
\begin{equation}\label{DiagFitt1}
\xymatrix { 0 \ar[r] & B_{n+1} \ar[r] \ar[d]^{b^{n+1}_n} & W[\G_{n+1}][[G_\F]]^r \ar[r] \ar[d]^{\pi^{n+1}_n} &
(W\otimes T_p(F_{n+1}))(\chi)\ar[r]\ar[d]^{N(F)^{n+1}_n(\chi)}& 0 \\
0 \ar[r] & B_n \ar[r] & W[\G_n][[G_\F]]^r \ar[r] & (W\otimes T_p(F_n))(\chi)\ar[r]& 0 }
\end{equation}
(where the central map is the canonical projection).
The maps $\pi^{n+1}_n$ and $N(F)^{n+1}_n(\chi)$ are surjective and that their kernels are
$\left( I_{\G^{n+1}_n}W[\G_{n+1}][[G_{\ov{\F}}]]\right)^r$ and $I_{\G^{n+1}_n}(W\otimes T_p(F_{n+1}))(\chi)$.
The map between these two kernels is obviously surjective, hence, by the snake lemma sequence, we have
that $b^{n+1}_n$ is surjective as well.

\noindent Taking the inverse limit in diagram \eqref{DiagFitt1} (which verifies the Mittag-Leffler condition),
we obtain the exact sequence (recall $\L=W[[\G]]$)
\begin{equation}\label{EqFittCalmCalf} 0 \rightarrow B_\infty:=\plim{n} B_n \rightarrow \L[[G_\F]]^r
\rightarrow \plim{n} (W\otimes T_p(F_n))(\chi)=T_p(\calf)(\chi)
\rightarrow 0 \ .\end{equation} Recall that we can use any
$\beta_1,\ldots,\beta_r\in B_n$ as rows of a matrix
$M_{\beta_1,\ldots,\beta_r}\in Mat_r(W[\G_n])$ and
$\Fitt_{W[\G_n]}((W\otimes T_p(F_n))(\chi))$ is generated by the
$\det(M_{\beta_1,\ldots,\beta_r})$. The surjectivity of the maps
$b^{n+1}_n$ implies the same property for the induced maps
$b_n:B_\infty \sri B_n$, i.e., the ``relations'' at the infinite
level are all induced by ``relations'' already existing at lower
levels (the technical arguments of the final parts of \cite[Theorem 2.1]{GrKu} are not necessary here because of
the presence of just one ramified prime and our previous computations on kernels of norm maps).
Using the characterization of the Fitting ideal in Definition \ref{DefFitt},
it is easy to see that the surjectivity of the $b_n$ and the sequence
\eqref{EqFittCalmCalf} yield the desired result, i.e.,
\[ \begin{array}{cl}\Fitt_{\L[[G_\F]]}(T_p(\calf)(\chi))& =(\det(B_\infty))=\plim{n}(\det(B_n)) \\
\ &= \plim{n} \Fitt_{W[\G_n][[G_\F]]}((W\otimes T_p(F_n))(\chi)) \\
\ &= \plim{n} (\Theta_n^\#(\g^{-1},\chi)) = (\Theta_\infty^\#(\g^{-1},\chi))\ . \qquad\qquad\qquad\qquad\qedhere \end{array} \]
\end{proof}

\section{Iwasawa main conjecture for the $\pr$-cyclotomic extension}\label{SecIMC}
Consider now $W\otimes C_n=W\otimes Jac(X_{F_n})(\F)=W\otimes Pic^0(X_{F_n})(\F)$ as a $W[G_n]$-module and
the natural maps $i(C)_{n+1}^n:W\otimes C_n\rightarrow W\otimes C_{n+1}$ and
$N(C)^{n+1}_n:W\otimes C_{n+1}\rightarrow W\otimes C_n$. Denote by $\mathcal{C}$ the $W[[G_{\infty}]]$-module
$\plim{n}W\otimes C_n$ (defined, as usual, with respect the norm maps $N(C)^{n+1}_n$).

\noindent The main results of this section are the following

\begin{thm}\label{Cfinitgen}  Let $\chi\neq \chi_0$, then the module $\mathcal{C}(\chi):=\e_{\chi}\mathcal{C}$
is a finitely generated torsion $\L$-module.
\end{thm}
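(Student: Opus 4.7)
The plan is to deduce the theorem from Proposition \ref{FinGen1} and Theorem \ref{FittCalmCalf} by ``specializing'' the $G_\F$-action to the trivial one, i.e., by passing to $G_\F$-coinvariants of the Iwasawa Tate module. The key identification to establish is
\[ \mathcal{C}(\chi) \simeq T_p(\calf)(\chi)_{G_\F} = T_p(\calf)(\chi)/(1-\g^{-1})T_p(\calf)(\chi)\,, \]
after which finite generation and torsion over $\L = \L[[G_\F]]/(1-\g^{-1})$ will follow formally from the analogous statements over $\L[[G_\F]]$.

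At each finite level, Lemma \ref{FromTpToCn} (combined with the observation that taking $\chi$-parts commutes with taking $G_\F$-coinvariants, since the two actions commute) gives natural isomorphisms
\[ (W\otimes C_n)(\chi) \simeq (W\otimes T_p(F_n))(\chi)/(1-\g^{-1})(W\otimes T_p(F_n))(\chi)\,, \]
compatible with the norm maps $N(C)^{n+1}_n$ on the left and $N(F)^{n+1}_n(\chi)$ on the right. I would then need to interchange the inverse limit in $n$ with the quotient by $(1-\g^{-1})$. This is the step I expect to be the main obstacle (and presumably is the content of the authors' Lemma \ref{BigLemma}): in general, coinvariants do not commute with inverse limits. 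Proposition \ref{CohoTriv} provides exactly what is needed, though: each $(W\otimes T_p(F_n))(\chi)$ is $W[\G_n]$-free and the norms $N(F)^{n+1}_n(\chi)$ are surjective with kernel $I_{\G^{n+1}_n}(W\otimes T_p(F_{n+1}))(\chi)$, so both the original projective system and the systems obtained by multiplication by $(1-\g^{-1})$ are Mittag-Leffler. A short diagram chase then yields the desired commutation of inverse limit with quotient.

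With the identification $\mathcal{C}(\chi) \simeq T_p(\calf)(\chi)_{G_\F}$ in hand, Proposition \ref{FinGen1} (which gives finite generation of $T_p(\calf)(\chi)$ over $\L[[G_\F]]$) immediately implies that $\mathcal{C}(\chi)$ is finitely generated over $\L$. For torsion, Theorem \ref{FittCalmCalf} shows that $\Theta_\infty^\#(\g^{-1},\chi)$ annihilates $T_p(\calf)(\chi)$, hence its image $\Theta_\infty^\#(1,\chi) \in \L$ annihilates $\mathcal{C}(\chi)$. Since $\L = W[[\G]]$ is an integral domain (as an inverse limit of the power series domains $W[[T_1,\ldots,T_r]]$) and $\Theta_\infty^\#(1,\chi)$ projects at each level $n$ to the nonzero Stickelberger element $\Theta_n^\#(1,\chi) \in W[\G_n]$, it is a nonzero element of $\L$ and thus a non-zero-divisor, proving the torsion claim.
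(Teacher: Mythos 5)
Your proof is correct, but it takes a genuinely different route from the paper's. The paper works directly with the class groups $C_n$: its Lemma \ref{BigLemma} (parts (3) and (4)) proves by class field theory that the norm maps $N(C)^{n+1}_n$ on the $\chi$-parts are surjective with kernel exactly the augmentation $I_{\G^{n+1}_n}(W\otimes C_{n+1})(\chi)$, and then deduces $\mathcal{C}(\chi)/\gotI_n\mathcal{C}(\chi)\simeq (W\otimes C_n)(\chi)$ so that finite generation follows from the generalized Nakayama lemma, while torsion follows because $\Theta_\infty^\#(1,\chi)=\plim{n}\Theta_n^\#(1,\chi)$ annihilates $\mathcal{C}(\chi)$. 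You instead ``descend'' from the Tate module results already established in Section \ref{SecFittInf}: you identify $\mathcal{C}(\chi)$ with $T_p(\calf)(\chi)_{G_\F}$ using Lemma \ref{FromTpToCn} levelwise and an inverse-limit argument, and then invoke Proposition \ref{FinGen1} and Theorem \ref{FittCalmCalf}. This is a legitimate shortcut: it would prove Theorem \ref{Cfinitgen} (and in fact Theorem \ref{IMClevel} too, since Fitting ideals are stable under the base change $\Lambda[[G_\F]]\twoheadrightarrow\Lambda$) without invoking Lemma \ref{BigLemma}, which the paper needs anyway for Proposition \ref{lem4.1}. One small point worth making explicit in your interchange of $\plim{n}$ with $G_\F$-coinvariants: beyond Mittag--Leffler for the system $\{(1-\g^{-1})(W\otimes T_p(F_n))(\chi)\}$ (which kills the $\varprojlim^1$), you also need $\plim{n}(1-\g^{-1})(W\otimes T_p(F_n))(\chi)=(1-\g^{-1})T_p(\calf)(\chi)$; this is not automatic from Mittag--Leffler but follows immediately either from compactness or from the injectivity of $1-\g^{-1}$ on each $T_p(F_n)$, which is exactly what the exact sequence in the proof of Lemma \ref{FromTpToCn} records. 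With that line added your argument is complete.
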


\noindent Therefore the Fitting ideal $\Fitt_{\L}(\mathcal{C}(\chi))$ is well defined and we have

\begin{thm}[Iwasawa Main Conjecture] \label{IMClevel}
Let $\chi\neq\chi_0$, then
\[ \Fitt_{\L}(\mathcal{C}(\chi))=(\Theta_{\infty}^{\#}(1,\chi)) \ .\]
\end{thm}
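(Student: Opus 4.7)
The plan is to obtain Theorem \ref{IMClevel} as a specialization at $\gamma^{-1}=1$ of Theorem \ref{FittCalmCalf}. The bridge between $\mathcal{C}(\chi)$ and $T_p(\calf)(\chi)$ is provided by Lemma \ref{FromTpToCn}, which identifies $C_n$ with $T_p(F_n)_{G_\F}=T_p(F_n)/(1-\gamma^{-1})T_p(F_n)$. First I would verify that this isomorphism is natural with respect to the norm maps $N(F)^{n+1}_n$ and $N(C)^{n+1}_n$, so that after taking $\chi$-parts and passing to the inverse limit one obtains a canonical map $T_p(\calf)(\chi)_{G_\F}\to\mathcal{C}(\chi)$.

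The crucial step is to show this map is an isomorphism, i.e.\ that the inverse limit commutes with the functor $(-)_{G_\F}$ in this setting:
\[ \mathcal{C}(\chi)=\plim{n}\bigl((W\otimes T_p(F_n))(\chi)/(1-\gamma^{-1})(W\otimes T_p(F_n))(\chi)\bigr)\simeq T_p(\calf)(\chi)/(1-\gamma^{-1})T_p(\calf)(\chi). \]
This requires the compatibility relations collected in Lemma \ref{BigLemma}: one needs the Mittag--Leffler property for the tower of kernels $(1-\gamma^{-1})(W\otimes T_p(F_n))(\chi)$. Here the freeness and cohomological triviality established in Proposition \ref{CohoTriv} are essential, since they imply that the norm maps $N(F)^{n+1}_n(\chi)$ are surjective with kernel $I_{\Gamma^{n+1}_n}(W\otimes T_p(F_{n+1}))(\chi)$, and thus remain surjective after quotienting by $(1-\gamma^{-1})$.

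Once the identification $\mathcal{C}(\chi)\simeq T_p(\calf)(\chi)_{G_\F}$ is secured, one applies the standard base-change property of Fitting ideals: for any ring $R$, ideal $I\subset R$, and $R$-module $M$,
\[ \Fitt_{R/I}(M/IM)=\Fitt_R(M)\cdot (R/I). \]
Taking $R=\Lambda[[G_\F]]$, $I=(1-\gamma^{-1})$ and $M=T_p(\calf)(\chi)$, and observing that $R/I\simeq\Lambda$ because $G_\F$ is topologically generated by $\gamma$, Theorem \ref{FittCalmCalf} yields
\[ \Fitt_\Lambda(T_p(\calf)(\chi)_{G_\F})=(\Theta_\infty^\#(\gamma^{-1},\chi))\bmod (1-\gamma^{-1})=(\Theta_\infty^\#(1,\chi)), \]
which is exactly the statement of the IMC. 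For the companion Theorem \ref{Cfinitgen}, finite generation of $\mathcal{C}(\chi)$ follows from a topological Nakayama argument (as in Proposition \ref{FinGen1}) applied to $\Lambda$ with the basis of neighborhoods $\{\gotI_n\}$, using that each $(W\otimes C_n)(\chi)$ is a finite $W[\Gamma_n]$-module, while the torsion property is immediate once $\Theta_\infty^\#(1,\chi)$ has been exhibited as an annihilator.

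The main obstacle I expect is the second paragraph: verifying that the inverse limit over $n$ genuinely commutes with the $G_\F$-coinvariants functor. Unlike in Theorem \ref{FittCalmCalf}, where free presentations of $(W\otimes T_p(F_n))(\chi)$ at each finite level glue cleanly under Mittag--Leffler, after reducing modulo $(1-\gamma^{-1})$ the modules $(W\otimes C_n)(\chi)$ are merely finite $W[\Gamma_n]$-modules with no a priori cohomological triviality, so the compatibility has to be extracted from that of the Tate modules upstairs, which is precisely where Lemma \ref{BigLemma} does the technical work.
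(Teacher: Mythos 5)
Your approach is correct but takes a genuinely different route from the paper's. The paper proves the IMC by working entirely at the level of class groups: it fixes free presentations $B_n\hookrightarrow W[\Gamma_n]^r\twoheadrightarrow (W\otimes C_n)(\chi)$ at each finite level and shows these glue along the tower, using the class-field-theoretic Lemma~\ref{BigLemma} (in particular parts (3) and (4)) to prove that the norm maps $N(C)^{n+1}_n(\chi)$ are surjective with kernel $I_{\Gamma^{n+1}_n}(W\otimes C_{n+1})(\chi)$; Mittag--Leffler then gives a presentation of $\calc(\chi)$ over $\Lambda$, and Corollary~\ref{cor3.4} is invoked at each finite level. You instead propose to pull the computation up to the Tate module, establish $\calc(\chi)\simeq T_p(\calf)(\chi)_{G_\F}$, and then apply the base-change property of Fitting ideals once at the infinite level via Theorem~\ref{FittCalmCalf}. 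This is structurally the inversion of the paper's order of operations: the paper specializes $\gamma^{-1}\mapsto 1$ at each finite level and then takes limits, while you take limits first (Theorem~\ref{FittCalmCalf}) and specialize at the end. Your route is arguably more economical, since the norm-kernel computation and surjectivity for class groups can be read off via the snake lemma from the short exact sequences $0\to (W\otimes T_p(F_n))(\chi)\xrightarrow{1-\gamma^{-1}}(W\otimes T_p(F_n))(\chi)\to (W\otimes C_n)(\chi)\to 0$ (using injectivity of $1-\gamma^{-1}$ on $T_p(F_n)$, automatic because the cokernel is finite and the module is $\Z_p$-free of finite rank) together with Proposition~\ref{CohoTriv}, so you bypass Lemma~\ref{BigLemma} entirely for $\chi\neq\chi_0$. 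What you lose is a bit of self-containedness on the class-group side: you must verify that the isomorphism of Lemma~\ref{FromTpToCn} is natural in $n$ (i.e., intertwines $N(F)^{n+1}_n$ and $N(C)^{n+1}_n$), a compatibility the paper never needs to record because it never identifies the two towers at the infinite level; this is true by functoriality of the Lang/Ext construction, but it is a genuine extra step. You correctly locate this as the key technical point, and your Mittag--Leffler argument for exchanging $\plim$ with $(-)_{G_\F}$ is sound given that the Tate-module norms are surjective.
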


\begin{rem}
The theorem above allows us to compute the Fitting ideal of $\calc(\chi)$ as the inverse limit of the Fitting ideals
appearing in the (natural) filtration of $\calf$ given by the fields $F_n$. A different approach to the same problem
is provided in \cite[Section 5]{BBL1} where the authors use a filtration of $\Z_p^d$-extensions (a more general approach
and the fact that the limit is independent from the filtration are shown in \cite{BBL2}).
In that paper the statement of the Main Conjecture involves characteristic ideals but (for Iwasawa modules) they coincide
with Fitting ideals whenever the Fitting is principal (see, for example, \cite[Lemma 5.10]{BL}).
\end{rem}

\noindent Before going into the proofs of the above theorems, we need a crucial lemma.

\begin{lem}\label{BigLemma} Let $F_0\subset K\subset E\subset \mathcal{F}$, where $E/F$ is a finite extension
and the group $G:=\Gal(E/K)$ is a $p$-group. For any field $L\subset \calf$ we let $\pr_L$ be the unique prime of $L$ lying
above $\pr$, we recall that $\calCl(L)$ denotes the group of classes of degree zero divisors. We have the following properties:
\begin{enumerate}
\item the map $i^K_E:\CaCl^0(K)\rightarrow \CaCl^0(E)$ is injective;
\item there is an equality $\CaCl^0(E)^G=i^K_E(\CaCl^0(K))+\langle r\frac{|G|}{p^t}\pr_{E}-\frac{d}{p^t} i^K_E(v)\rangle$,
where $p^t:=(|G|,d)$, $v$ is a place of $K$ lying above a prime of $A$ of degree $r$ (prime with $p$)
and which is totally split in $E$. The second term disappears when we consider $\chi$-parts for nontrivial characters,
in particular, for $\chi\neq\chi_0$, we have
\[ ((W\otimes \CaCl^0(E))(\chi))^G=i^K_E((W\otimes \CaCl^0(K))(\chi))\ ;\]
\item the norm map $N^E_K:\CaCl^0(E)\rightarrow \CaCl^0(K)$ is surjective;
\item for $\chi\neq\chi_0$, we have $\Ker(N^E_K(\chi))=I_G (W\otimes \CaCl^0(E))(\chi)$.
\end{enumerate}
\end{lem}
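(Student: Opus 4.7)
The approach is to treat the four parts essentially in order, with cohomological and Galois-theoretic arguments throughout. The key geometric input is that $E/K$ is totally ramified at the unique prime $\pr_E$ above $\pr$ (since $\cali_\pr=\G$) and is unramified at every other place of $K$.

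For part (1), the plan is to invoke Hilbert 90 for the exact constant field $\F$. Given $D\in\mathrm{Div}^0(X_K)$ with $\varphi^*(D)=(g)$ principal in $E$, applying any $\sigma\in G$ yields $(\sigma g)=(g)$, forcing $\sigma g/g\in\F^*$ (since $\calf/F$ is geometric, the exact constant field of $E$ is still $\F$, and a nowhere-vanishing regular function on $X_E$ is a constant). This defines a homomorphism $G\to\F^*$; as $G$ is a $p$-group and $|\F^*|=q-1$ is coprime to $p$, the homomorphism is trivial, so $g\in K^*$ and $[D]=0$. For part (3), the norm $Jac(X_E)\to Jac(X_K)$ of abelian varieties over $\F$ is surjective with connected kernel (the Prym variety of the Galois cover $X_E\to X_K$); Lang's theorem then gives surjectivity on $\F$-points.

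For part (2), I lift a $G$-invariant class to a $G$-invariant divisor via Hilbert 90 applied to $E^*$ (standard, $H^1(G,E^*)=0$). Since the only ramified place in $E/K$ is $\pr$ (totally ramified), the $G$-orbits on places of $X_E$ are $\{\pr_E\}$ together with $\varphi^*(u)=\sum_{w|u}w$ for $u\neq\pr$ a place of $X_K$. Thus every $G$-invariant divisor of degree $0$ takes the form $\varphi^*(D_0)+a\pr_E$ with $|G|\deg(D_0)+ad=0$, whose integral solutions form the cyclic $\Z$-module generated by $(d/p^t,-|G|/p^t)$. Existence of a place $v$ of $K$ above a prime of $A$ of degree $r$ coprime to $p$, totally split in $E$, follows from Chebotarev density. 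The explicit divisor $r\frac{|G|}{p^t}\pr_E-\frac{d}{p^t}i^K_E(v)$ corresponds under this parameterization to $-r$ times the primitive solution; since $\gcd(r,p)=1$, on the $p$-primary part it generates the cyclic quotient $\CaCl^0(E)^G/i^K_E(\CaCl^0(K))$ up to a unit. For $\chi\neq\chi_0$, the class of $\pr_E$ is $\Delta$-fixed (being the unique prime above $\pr$) and hence killed by $e_\chi$, while $e_\chi i^K_E(v)\in i^K_E((W\otimes\CaCl^0(K))(\chi))$; thus the extra term collapses into $i^K_E$ on the $\chi$-part.

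For part (4), the inclusion $I_G\cdot(W\otimes\CaCl^0(E))(\chi)\subseteq\Ker N^E_K(\chi)$ is immediate (the group-ring norm kills augmentation). For the reverse inclusion, I apply (2) and (3) to every intermediate extension $E/E^H$ ($H\leq G$) to conclude $\hat H^0(H,(W\otimes\CaCl^0(E))(\chi))=0$ for every subgroup $H$. By the standard criterion (for a finite $p$-group, vanishing of $\hat H^0$ on all subgroups implies $G$-cohomological triviality), the $\chi$-component is $G$-cohomologically trivial; in particular $\hat H^{-1}(G,\cdot)(\chi)=0$, which is exactly the desired equality. The main difficulty I anticipate lies in part (2): matching the primitive solution of the degree equation with the explicit extra class stated in the lemma, and verifying that the factor $-r$ is harmless on the $p$-part (as $\gcd(r,p)=1$) and that the extra generator collapses on the nontrivial $\chi$-components.
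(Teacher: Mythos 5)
Your plan for parts (1) and (2) is essentially the paper's: both rest on $\Hom(G,\F^*)=0$ for a $p$-group $G$ (giving $P_E^G=P_K$ and $H^1(G,P_E)=0$ via the Hilbert 90 sequence $0\to\F^*\to E^*\to P_E\to0$), then on a diagram chase identifying $\CaCl^0(E)^G/i^K_E(\CaCl^0(K))$ with $Div^0(E)^G/i^K_E(Div^0(K))\simeq\Z/p^t$ and on exhibiting the explicit generator $\wt D=r\frac{|G|}{p^t}\pr_E-\frac{d}{p^t}i^K_E(v)$; your reduction of the degree equation is the same idea, just less carefully packaged, and the collapse for $\chi\neq\chi_0$ can indeed be read off from $e_\chi\pr_E=0$ at the divisor level, though the paper instead traces the generator to $E^\Delta/K^\Delta$.

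For part (3) you depart from the paper. The paper uses class field theory: $E/K$ is ramified at $\pr$ while the Hilbert class field $H(K)/K$ is unramified, so $H(K)\cap E=K$, the restriction $\Gal(H(E)/E)\to\Gal(H(K)/K)$ is onto, and one translates via Artin maps. You invoke the norm $\varphi_*\colon Jac(X_E)\to Jac(X_K)$, the connectedness of its kernel, and Lang's theorem. The surjectivity of $\varphi_*$ as a morphism of abelian varieties is immediate from $\varphi_*\varphi^*=|G|$, and Lang then gives surjectivity on $\F$-points \emph{provided} the kernel is connected. But the clause ``the Prym variety, which is connected'' is precisely what needs an argument: for an unramified Galois cover $\ker\varphi_*$ has $\pi_0(\ker\varphi_*)\cong G\neq 0$. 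What makes it connected here is that $\ker\varphi^*$ is trivial over $\ov\F$ (your part (1) argument works over $\ov\F$ since $\Hom(G,\ov\F^*)=0$), together with the autoduality of Jacobians under which $\pi_0(\ker\varphi_*)$ is Cartier dual to $\ker\varphi^*$. You need either to cite that duality explicitly or to use the class-field-theoretic route.

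The serious problem is part (4). The ``standard criterion'' you invoke --- that for a finite $p$-group $G$, vanishing of $\hat H^0(H,M)$ for every subgroup $H\leq G$ forces $M$ to be $G$-cohomologically trivial --- is false. Take $G=\Z/p$ and $M=I_G\subset\Z_p[G]$ the augmentation ideal. Then $M^G=0$ (an invariant element is a $\Z_p$-multiple of $n(G)$, which has nonzero augmentation), so $\hat H^0(H,M)=0$ for both subgroups $H\in\{1,G\}$; yet from $0\to I_G\to\Z_p[G]\to\Z_p\to 0$ one computes $\hat H^{-1}(G,I_G)\simeq\hat H^{-2}(G,\Z_p)\simeq G\neq 0$, so $M$ is not cohomologically trivial, and in particular $\hat H^{-1}(G,M)\neq 0$ --- exactly the group you needed to kill. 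Nakayama's criterion for $p$-groups requires two consecutive Tate groups to vanish, not one. The paper avoids this pitfall: for cyclic $G=\langle\delta\rangle$ it compares cardinalities directly, using (3) to get $|\Ker N^E_K(\chi)|=|M(\chi)|/|N^E_K M(\chi)|$ and (1), (2) to get $|(1-\delta)M(\chi)|=|M(\chi)|/|M(\chi)^G|$, which are equal, so the containment $(1-\delta)M(\chi)\subseteq\Ker N^E_K(\chi)$ is an equality; for general $p$-groups it then factors $N^E_K=N^{E'}_K\circ N^E_{E'}$ with $\Gal(E/E')$ cyclic and runs an induction on $|G|$. Some such direct counting argument is needed; the cohomological-triviality shortcut as you stated it does not hold.
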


\begin{proof} For simplicity we write $N$ and $i$ for $N^E_K$ and $i^K_E$ respectively.

\noindent (1) For any field $L$ write $P_L$ for the principal divisors of $L$. Consider the exact sequences
\begin{equation}\label{Eq1}
0\rightarrow \F^* \rightarrow E^* \rightarrow P_E \rightarrow 0
\end{equation}
and
\begin{equation}\label{Eq2}
0\rightarrow P_E \rightarrow Div^0(E) \rightarrow \CaCl^0(E) \rightarrow 0\ .
\end{equation}
Taking $G$-cohomology in \eqref{Eq1} one finds
\[ 0\rightarrow \F^* \rightarrow K^* \rightarrow P_E^G \rightarrow 0\rightarrow 0\rightarrow H^1(G,P_E)\rightarrow 0 \]
(because of Hilbert 90 and because $G$ is a $p$-group so $H^i(G,\F^*)=0$ for any $i\geqslant 1$), so, in particular,
$P_E^G=P_K$ and $H^1(G,P_E)=0$. The $G$-invariants of the sequence \eqref{Eq2} then fit into the diagram
\begin{equation}\label{DiagBigLemma}
\xymatrix { P_K \ar@{^(->}[r] \ar@{=}[d] & Div^0(K) \ar@{->>}[r] \ar[d]^i & \CaCl^0(K) \ar[d]^{i^K_E} \\
P_E^G =P_K \ar@{^(->}[r] & Div^0(E)^G \ar[r] & \CaCl^0(E)^G \ar[r] & H^1(G,P_E)=0 \ .}
\end{equation}
The injectivity of the central vertical map and the snake lemma sequence yield the desired injectivity of $i^K_E$.

\noindent (2) We need several steps:
\begin{enumerate}
\item[(a)] {\em The group $Div(E)^G/i(Div(K))$ is cyclic of order $[E:K]$ and
is generated by the class of $\pr_E$.}

\noindent Write $Div(K)=\oplus_v\Z v$ ($v$ runs through all the primes of $K$) and $Div(E)=\oplus_v H_v$ with
$H_v=\oplus_{w|v} \Z w$. If $v\neq\pr_K$ is unramified, then $H_v=\Z[G/G_v]w$, where $G_v$ is the decomposition subgroup
of $v$ in $G$, and obviously
\[ H_v^G=\Z i(v)\quad {\rm with}\quad i(v)=\sum_{\sigma\in G/G_v}\sigma w \ .\]
For the ramified place we have $\sigma(\pr_E)=\pr_E$ and $|G|\pr_E=i(\pr_K)$: this yields the statement.

\item[(b)] {\em The group $Div^0(E)^G/i(Div^0(K))$ is killed by $p^t:=(|G|,d)$.}

\noindent Take $D\in Div^0(E)^G$, by part (a) we can write $D=n\pr_E+D'$ with $D'\in i(Div(K))$. Since $D$ has degree
zero we have $-nd=\deg_E (D')=|G|\deg_K (D')$ and $\frac{|G|}{p^t}$ divides $n=\frac{|G|}{p^t}n'$. Therefore
\[ \begin{array}{cl} p^tD & = p^tn\pr_E +p^tD' = |G|n'\pr_E +p^tD' \\
\ & = n'\pr_K +p^tD' \in i(Div(K)) \ .\end{array} \]
Moreover
\[ \begin{array}{cl} \deg_K(n'\pr_K +p^tD')& =n'd +p^t\deg_K(D')=\ds{\frac{p^tn}{|G|}}d+p^t\deg_K(D') \\
\ & =\ds{\frac{p^t}{|G|}}(nd+|G|\deg_K(D'))=0 \ ,\end{array}\]
hence $p^tD\in i(Div^0(K))$.

\item[(c)] {\em There are isomorphisms $\CaCl^0(E)^G/i^K_E(\CaCl^0(K))\simeq Div^0(E)^G/i(Div^0(K)) \simeq \Z/p^t$.}

\noindent The first isomorpshim is a consequence of the snake lemma sequence of diagram \eqref{DiagBigLemma}.
For the second, by part (b) it is enough to build a divisor of exact order $p^t$. Let $r$ be prime with $p$; by Chebotarev
density theorem there exists a (monic) irreducible polynomial $Q$ in $A$ such that $\deg(Q)=r$ and the prime
$(Q)$ is totally split in $E$. Take a prime $v$ of $K$ dividing $(Q)$ so that, in particular, $\deg_K(v)=r$. Put
\[ \wt{D}:=r\frac{|G|}{p^t}\pr_E - \frac{d}{p^t}i(v)\ ;\]
it is easy to check that (by construction) $\wt{D}\in Div^0(E)^G$ and its order is $p^t$.

\item[(d)] {\em If $\chi\neq \chi_0$, then $e_{\chi}(\wt{D})=0$.}

\noindent Recall that $\Delta=\Gal(F_0/F)$ has order prime to $p$. The previous steps can
be proved exactly in the same way for the field extension $E^\Delta/K^\Delta$, i.e., we have
\[ \frac{Div^0(E^{\Delta})^G}{i^{K^\Delta}_{E^\Delta}(Div^0(K^{\Delta}))}\simeq
\frac{\CaCl^0(E^{\Delta})^G}{i^{K^\Delta}_{E^\Delta}(\CaCl^0(K^{\Delta}))}\simeq\Z/p^t \]
and a generator is the class of
\[ \wt{D'}:=r\frac{|G|}{p^t}\pr_{E^{\Delta}}-\frac{d}{p^t}i^{K^\Delta}_{E^\Delta}(\tilde{v})\in Div^0(E^{\Delta})^G \]
(where $\tilde{v}$ is a prime of $K^\Delta$ lying below $v$). Note that the image of $\wt{D'}$ in
$Div^0(E)$ is
\[ i^{E^{\Delta}}_E(\wt{D'})= r\frac{|G|}{p^t}|\Delta|\pr_E-\frac{d}{p^t}i^{K^\Delta}_E(\tilde{v}) \]
and it still has order $p^t$ because $(|\Delta|,p)=1$. Therefore the class of $i^{E^{\Delta}}_E(\wt{D'})$
generates $\CaCl^0(E)^G/i(\CaCl^0(K))$ and, by construction, $\Delta$ acts trivially on it. Hence for $\chi\neq \chi_0$
we obtain
\[ e_{\chi}(W\otimes \CaCl^0(E))^G = e_{\chi}i(W\otimes \CaCl^0(K)) \]
and for the trivial character we have
\[ e_{\chi_0}\left(\frac{(W\otimes \CaCl^0(E))^G}{i(W\otimes \CaCl^0(K))}\right) \simeq W/p^tW \ .\]
\end{enumerate}

\noindent (3) This is just class field theory. Let $v$ be a place of $K$ which divides
$\infty$ and write $B$ for the ring of elements
in $K$ which are regular outside $v$; since $v$ is of degree 1, we have $\CaCl(B)\simeq \CaCl^0(K)$.
Let $H(K)$ be the maximal abelian unramified extension of $K$ in which $v$ is totally split. By class field theory, the
Artin map provides an isomorphism $\Gal(H(K)/K)\simeq \CaCl(B)$ and, because of the ramification in $E/K$,
we have $H(K)\cap E=K$. Denote by $C$ the integral closure of $B$ in $E$ (i.e., the elements in $E$ which are
regular outside any $w|v$); there is a natural map $\CaCl^0(E) \rightarrow \CaCl(C)\simeq \Gal(H(E)/E)$
which preserves Galois action and is surjective because $\deg_E(w)=1$ ($H(E)$ is the analog of $H(K)$, now totally split at $w$).
It only remains to prove that the natural norm map $\CaCl(C)\rightarrow\CaCl(B)$ is surjective.
By construction $EH(K)\subset H(E)$, hence the restriction map
\[ Res : \Gal(H(E)/E)\rightarrow \Gal(EH(K)/E) \simeq \Gal(H(K)/K) \]
is surjective. The well known diagram of class field theory
\[ \xymatrix { \CaCl(C) \ar[r]^{\simeq\qquad} \ar[d]^N & \Gal(H(E)/E) \ar@{->>}[d]^{Res} \\
\CaCl(B) \ar[r]^{\simeq\qquad} & \Gal(H(K)/K) } \]
concludes the proof.

\noindent (4) Consider the sequence (exact by part (3)\,)
\[ 0\rightarrow (W\otimes \Ker(N))(\chi)\rightarrow (W\otimes \CaCl^0(E))(\chi)
{\buildrel{N}\over{\longrightarrow}}
(W\otimes \CaCl^0(K))(\chi)\rightarrow 0 \ , \]
which yields
\[ |(W\otimes \Ker (N))(\chi)|=\frac{|(W\otimes \CaCl^0(E))(\chi)|}{|(W\otimes \CaCl^0(K))(\chi)|} \]
(we recall that for any $W$-module $M$ one has $|M|=p^{u\cdot \ell_{W}(M)}$ with $\ell_W$ the length and
$u:=[W\otimes\Q_p:\Q_p]$).

\noindent We first assume that  $G=\langle \delta\rangle$ is cyclic. Then, for $\chi\neq \chi_0$, the sequence
(exact by part (2)\,)
\[ 0\rightarrow (W\otimes \CaCl^0(K))(\chi) \rightarrow (W\otimes \CaCl^0(E))(\chi)
{\buildrel{1-\delta}\over{-\!\!\!-\!\!\!-\!\!\!-\!\!\!\longrightarrow}}
(1-\delta)(W\otimes \CaCl^0(E))(\chi)\rightarrow 0 \]
yields $|(1-\delta)(W\otimes \CaCl^0(E))(\chi)|=|(W\otimes \Ker(N))(\chi)|$ (simply by counting cardinalities).
Since $(1-\delta)(W\otimes \CaCl^0(E))(\chi)\subseteq (W\otimes \Ker(N))(\chi)$, we have the equality between them.

\noindent For the general case $G$ we use an induction argument on $|G|$. If $|G|=1$ there is nothing to prove
(or, if $|G|=p$, then $G$ is cyclic and we have the proof above). Consider now $K\subset E'\subset E$, where
$G_1:=\Gal(E/E')$ is a cyclic group and we put $G_2:=\Gal(E'/K)$. By the inductive step and the cyclic case
we have
\[ (W\otimes \Ker(N^{E'}_K))(\chi)=I_{G_2}(W\otimes \CaCl^0(E'))(\chi) \]
and
\[ (W\otimes \Ker(N^{E}_{E'}))(\chi)=I_{G_1}(W\otimes \CaCl^0(E))(\chi) \ .\]
By part (3) all norms are surjective and, since $N=N^{E'}_{K}\circ N^E_{E'}$, we have
\[ \begin{array}{cl} (W\otimes \Ker(N))(\chi) & =(N^E_{E'})^{-1}I_{G_2}(W\otimes \CaCl^0(E'))(\chi) \\
\ & = I_G (W\otimes \CaCl^0(E))(\chi)+(W\otimes \Ker(N^E_{E'}))(\chi) \\
\ & = I_G (W\otimes \CaCl^0(E))(\chi)\ , \end{array} \]
because $(W\otimes \Ker(N^E_{E'}))(\chi)=I_{G_1}(W\otimes \CaCl^0(E))(\chi)\subset
I_{G}(W\otimes \CaCl^0(E))(\chi)$.
\end{proof}

\begin{proof}[Proof of Theorem \ref{Cfinitgen}]
Recall that $C_n:=\calCl(F_n)\{p\}$: by Corollary \ref{cor3.4}, we know that for $\chi\neq \chi_0$
\[ \Fitt_{W[\G_n]}((W\otimes C_n)(\chi))=(\Theta_n^{\#}(1,\chi))\ .\]
By the previous lemma the kernel of $N(C)^{n+1}_n:(W\otimes C_{n+1})(\chi)\twoheadrightarrow (W\otimes C_n)(\chi)$
is $I_{\G^{n+1}_n}(W\otimes C_{n+1})(\chi)$ and we know that $\gotI_n=\plim{m}I_{\G^{m+n}_n}$. Hence
\[ \mathcal{C}(\chi)/\gotI_n\mathcal{C}(\chi) \simeq (W\otimes C_n)(\chi) \]
as $W[\G_n]$-modules.
The generalized version of Nakayama Lemma implies that $\mathcal{C}(\chi)$ is a
finitely generated $\L$-module because $(W\otimes C_n)(\chi)$ is a finitely generated $W[\G_n]$-module.
Now simply recall that $\Theta_n^{\#}(1,\chi)((W\otimes C_n)(\chi))=0$ and that
$\Theta_{\infty}^{\#}(1,\chi)=\plim{n}\Theta_n^{\#}(1,\chi)$ to get
\[ \Theta_{\infty}^{\#}(1,\chi)\mathcal{C}(\chi) =
\plim{n}\Theta_n^{\#}(1,\chi)\left( \plim{n} (W\otimes C_n)(\chi) \right) = 0 \ ,\]
i.e., $\mathcal{C}(\chi)$ is a torsion $\L$-module.
\end{proof}

\begin{proof}[Proof of Theorem \ref{IMClevel} {\rm (IMC)}]
By Theorem \ref{Cfinitgen} the Fitting ideal $\Fitt_{\L}(\calc(\chi))$ is well defined. The statement
is equivalent to the equality
\[ \Fitt_{\L}\left( \plim{n} (W\otimes C_n)(\chi) \right)=
\plim{n}\left( \Fitt_{W[\G_n]}((W\otimes C_n)(\chi)) \right)\ . \]
\noindent Take $r\in\N$ such that $e_{\chi}\mathcal{C}$ is generated by $r$ elements,
consider the following commutative diagram
\[ \xymatrix { B_{n+1} \ar@{^(->}[r] \ar[d]^{b^{n+1}_n} &  W[\G_{n+1}]^r \ar@{->>}[r] \ar[d]^{\pi^{n+1}_n}
& (W\otimes C_{n+1})(\chi) \ar[d]^{N(C)^{n+1}_n} \\
B_n \ar@{^(->}[r] &  W[\G_n]^r \ar@{->>}[r] & (W\otimes C_n)(\chi) } \]
(where the central vertical map is the natural projection) and note that the kernels of $\pi^{n+1}_n$ and $N(C)^{n+1}_n$
are respectively $\left( I_{\G^{n+1}_n}W[\G_{n+1}]\right)^r$ and $I_{\G^{n+1}_n}(W\otimes C_{n+1})(\chi)$ (by Lemma
\ref{BigLemma} part (4)). Therefore the induced map between the kernels is surjective and this, together with the
surjectivity of $\pi^{n+1}_n$, yields the surjectivity of $b^{n+1}_n$ by the snake lemma.

\noindent Now the diagram above verifies the Mittag-Leffler
condition, so, taking the limit, we have an exact sequence
\begin{equation}\label{EqInfLev}
B_\infty:=\plim{n} B_n \iri \L^r \sri \mathcal{C}(\chi) \ .
\end{equation}
Working as in Theorem \ref{FittCalmCalf}, one sees that the surjectivity of the $b_n$ and the sequence \eqref{EqInfLev} yield
\[ \begin{array}{cl} \Fitt_{\L}(\mathcal{C}(\chi)) & = \ds{\plim{n}}\left( \Fitt_{W[\G_n]}((W\otimes C_n)(\chi)) \right) \\
\ & =\ds{\plim{n}} \left( \Theta_n^\#(1,\chi) \right) = (\Theta_\infty^\#(1,\chi))\ .\qquad\qquad\qquad\qquad\qquad\qedhere\end{array} \]
\end{proof}

\noindent We end this section with a remark on the module structure of $\mathcal{C}(\chi)$ which depends on the injectivity
of the inclusion maps (i.e., part (1) of Lemma \ref{BigLemma}); the proof is similar to \cite[Proposition 13.28]{Wash} (which
depends on the injectivity of \cite[Proposition 13.26]{Wash}).

\begin{prop}\label{lem4.1}
The $\L$-module $\mathcal{C}(\chi)$ has no nontrivial finite $\Lambda$-submodule.
\end{prop}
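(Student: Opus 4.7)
The plan is to adapt the standard Iwasawa-theoretic argument (essentially \cite[Proposition 13.28]{Wash}) to the $\pr$-cyclotomic setting, using Lemma \ref{BigLemma} as the substitute for the classical inputs. Suppose $Y$ is a finite $\Lambda$-submodule of $\mathcal{C}(\chi)$; the goal is to show $Y=0$.

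First, since $Y$ is finite and $\Gamma$ acts continuously on it, the action factors through some finite quotient; as $\{\Gamma^{(n)}\}_{n\geq 0}$ is a basis of open neighborhoods of the identity in $\Gamma$, there exists $n_0$ such that $\Gamma^{(n_0)}$ acts trivially on $Y$. In particular $Y\subseteq \mathcal{C}(\chi)^{\Gamma^{(m)}}$ for every $m\geq n_0$. Now fix any such $m$ and any element $y=(c_k)_{k\geq 0}\in Y$, where $c_k\in (W\otimes C_k)(\chi)$ and the tuple satisfies $N(C)^{k+1}_k(c_{k+1})=c_k$. For each $k\geq m$ the class $c_k$ is invariant under $\Gamma^{(m)}/\Gamma^{(k)}=\Gal(F_k/F_m)$, so Lemma \ref{BigLemma}(2) (which requires $\chi\neq\chi_0$) gives $c_k=i^m_k(c'_k)$ for some element $c'_k\in (W\otimes C_m)(\chi)$; by the injectivity of $i^m_k$ (Lemma \ref{BigLemma}(1)) this $c'_k$ is unique.

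Next I exploit the interplay between the two maps. The norm compatibility $N^k_m(c_k)=c_m$ combined with $N^k_m\circ i^m_k=[F_k:F_m]\cdot\mathrm{id}$ yields
\[ c_m=[F_k:F_m]\,c'_k\qquad\text{for every }k\geq m. \]
Since $[F_k:F_m]$ is an unbounded power of $p$ as $k\to\infty$ and $(W\otimes C_m)(\chi)$ is a finite abelian $p$-group (because $C_m=\calCl(F_m)\{p\}$ is finite), $c_m$ is divisible by arbitrarily high powers of $p$ in this finite module, forcing $c_m=0$.

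Since $m\geq n_0$ was arbitrary, $c_m=0$ for all $m\geq n_0$, and the norm compatibility then gives $c_m=N(C)^{n_0}_m(c_{n_0})=0$ for $m<n_0$ as well. Thus $y=0$, hence $Y=0$. The only real subtlety — and the spot where the positive-characteristic geometry intervenes — is Step 2: one needs both the injectivity of the transition maps $i^m_k$ on class groups along the tower and the identification of $\Gal(F_k/F_m)$-invariants with the image of $i^m_k$ on the $\chi$-part. Both are supplied by Lemma \ref{BigLemma}, and this is exactly where the restriction $\chi\neq\chi_0$ enters (avoiding the extra cyclic summand of order $p^t$ that appears in Lemma \ref{BigLemma}(2) for the trivial character).
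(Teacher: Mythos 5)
Your proof is correct, and it rests on the same two pillars as the paper's argument: Lemma~\ref{BigLemma}(1)--(2), supplying injectivity of the capitulation maps $i^m_k$ and the identification of $\Gal(F_k/F_m)$-invariants with the image of $i^m_k$ on the $\chi$-part for $\chi\neq\chi_0$, together with the projection-formula identity $N^k_m\circ i^m_k=[F_k:F_m]\cdot\mathrm{id}$. Where you genuinely streamline is in the bookkeeping. The paper first reduces to a $p$-torsion element $\alpha$, then singles out one topological generator $\gamma_{i,j}$ of $\Gamma$ acting trivially on $F_n$, uses a pigeonhole argument in the finite module to get $\gamma_{i,j}^{p^r}\alpha=\alpha$, and works inside the auxiliary tower $F_n\subset L_mF_n\subset L_{m+1}F_n\subset F_\nu$ built from the $\Z_p$-extension $L_\infty$ topologically generated by $\gamma_{i,j}$; the punch-line there is $i\circ N = n(G)$ acting as multiplication by $p$ on invariants, from which $i^n_\nu(\alpha_n)=p\alpha_{L_{m+1}F_n}=0$. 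Your version instead extracts directly from finiteness of $Y$ (and continuity of the $\Gamma$-action on the profinite module $\calc(\chi)$) a full open subgroup $\Gamma^{(n_0)}$ acting trivially, stays entirely inside the natural tower $\{F_k\}$, makes no reduction to $p$-torsion, and closes with the observation that $c_m=[F_k:F_m]\,c'_k$ lies in $\bigcap_j p^j\big((W\otimes C_m)(\chi)\big)=0$. The two arguments exchange the same information; yours avoids introducing the auxiliary $\Z_p$-extension, which makes it shorter and easier to check, at the (negligible) cost of needing the whole filtration $\{\Gamma^{(n)}\}$ to be a neighbourhood basis of the identity rather than any single $\Z_p$-direction.
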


\begin{proof} Let $M$ be a finite $\Lambda$-submodule of $\mathcal{C}(\chi)$ of order $|M|=s$ (obviously a power of $p$).
It is enough to prove that there is no $p$-torsion in $M$, so let $\alpha=(\alpha_n)_{n\in \N}\in M$ be
such that $p\alpha=0$ (so that $p\alpha_n=0$ for any $n\gg 0$). Fix $n$ and take a $\g_{i,j}$ among the generators
of $\G$ which acts trivially on $F_n$. Denote by $L_{\infty}=\cup L_m$ the $\Z_p$-extension topologically
generated by $\g_{i,j}$ over $F_0$. The $s+1$ elements of $M$
\[ \alpha\ ,\ \g_{i,j}^p\alpha\ ,\ \g_{i,j}^{p^2}\alpha\ ,\ \dots\ ,\ \g_{i,j}^{p^s}\alpha \]
cannot be distinct, hence there exist $0\leq r<t\leq s$ such that \
\[ \g_{i,j}^{p^r}\alpha=\g_{i,j}^{p^t}\alpha\quad,\ {\rm i.e.,}\quad \g_{i,j}^{p^r}\left(1-\g_{i,j}^{p^t-p^r}\right)\alpha =0\ .\]
This yields $\g_{i,j}^{p^r(p^{t-r}-1)}\alpha =\alpha$: since $\g_{i,j}$ and $\g_{i,j}^{p^{t-r}-1}$ generate the same
$\Z_p$-extension we can assume from the beginning that there exists an $r\geq 0$ such that $\g_{i,j}^{p^r}\alpha=\alpha$.
By construction $F_n\cap L_{\infty}=F_0$ and, for any $m\geq r$,
$\Gal(L_{m+1}/L_m)$ (generated by $\g_{i,j}^{p^m}$) acts trivially on $\alpha$.
Take $\nu$ big enough to have $p\alpha_\nu=0$ and $L_{m+1}F_n\subset F_\nu$, and consider the tower of extensions
\[ F_n\subset L_mF_n\subset L_{m+1}F_n\subset F_\nu \ .\]
From the surjectivity of the norm maps proved in Lemma \ref{BigLemma} one has
\[ \mathcal{C}(\chi)=\plim{[L:F_0]<\infty} (W\otimes C_L)(\chi) \ ,\]
so we can compute
\[ i_\nu^n(\alpha_n)=i_\nu^n(N^\nu_n(\alpha_\nu))
=i_\nu^n(N^{L_mF_n}_{F_n}(N^{L_{m+1}F_n}_{L_mF_n}(N^{F_\nu}_{L_{m+1}F_n}(\alpha_\nu)))) \ .\]
Since $\Gal(L_{m+1}F_n/L_mF_n)$ acts trivially on $\alpha$, the norm $N^{L_{m+1}F_n}_{L_mF_n}$ is just
multiplication by $p$ and we get $i_\nu^n(\alpha_n)=0$.
But Lemma \ref{BigLemma} part (1) shows that the maps like $i$ are injective so $\alpha_n=0$ and, eventually,
$\alpha=0$ as well.
\end{proof}

\section{Application to Bernoulli-Goss numbers and $\pr$-adic $L$-functions}\label{SecArithAppl}
We define an arithmetic invariant related to our $\pr$-adic $L$-function.

\begin{defin}\label{Defm(i)}
For any $i$, define
\begin{equation}\label{eqinvmp}
m_{\pr}(i):=\left\{\begin{array}{cl}
\Inf \left\{ v_\pr(L_{\pr}(1,y,\omega_\pr^i))\,:\ y\in\Z_p\,\right\} & {\rm for}\ i\not\equiv 0\pmod{q-1}\\
\ & \\
\Inf \left\{ v_\pr\left(\ds{\frac{d}{dX}}L_{\pr}(X,y,\omega_\pr^i)_{|X=1}\right)\,:\ y\in\Z_p\, \right\} &
{\rm for}\ i\equiv 0\pmod{q-1}
\end{array}\right. \ .\end{equation}
Obviously the value of $m_{\pr}(i)$ depends only on the class of $i$ modulo $q^d-1$.
\end{defin}

\begin{lem}\label{lem5padicLfun} We have the following equality
\[ m_{\pr}(i)=\Inf \left\{ v_\pr(\beta(j))\,:\ j\geq 1,\ j\equiv i\pmod{q^d-1}\,\right\}\ .\]
\end{lem}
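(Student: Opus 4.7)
The plan is to split according to whether $i\equiv 0\pmod{q-1}$ (the definition of $m_\pr(i)$ uses $L_\pr$ itself in the odd case and its derivative at $X=1$ in the even case), and in each case to show that the valuations of the quantity defining $m_\pr(i)$ coincide at integer points $j\geq 1$ with $j\equiv i\pmod{q^d-1}$ with $v_\pr(\beta(j))$, and then argue that such integers form a $\pr$-adically dense subset of $\Z_p$ that realizes every finite value attained by an arbitrary $y\in\Z_p$.

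\textbf{Odd case.} Corollary \ref{cor2padicLfun} gives, for $j\geq 1$ with $j\equiv i\pmod{q^d-1}$, the identity $L_\pr(1,j,\omega_\pr^i)=(1-\pi_\pr^j)\beta(j)$. Since $v_\pr(\pi_\pr^j)=j\geq 1$, we have $v_\pr(1-\pi_\pr^j)=0$, so $v_\pr(L_\pr(1,j,\omega_\pr^i))=v_\pr(\beta(j))$. The inclusion of such $j$ into $\Z_p$ yields $m_\pr(i)\leq\inf_j v_\pr(\beta(j))$. For the reverse inequality, first note that $m_\pr(i)<\infty$: otherwise $L_\pr(1,y,\omega_\pr^i)\equiv 0$, in particular $\beta(j)=0$ for all admissible $j$, contradicting Lemma \ref{BetaCongrTheta}. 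Now pick $y\in\Z_p$ realizing $m_\pr(i)=N$. Since $\gcd(q^d-1,p)=1$, by CRT, for any $m$ with $p^m>N$ we can choose $j\geq 1$ with $j\equiv i\pmod{q^d-1}$ and $j\equiv y\pmod{p^m}$. The congruence \eqref{eqcongrprm} (specialized at $X=1$) gives $L_\pr(1,y,\omega_\pr^i)\equiv L_\pr(1,j,\omega_\pr^i)\pmod{\pr^{p^m}}$; since $N<p^m$, this forces $v_\pr(L_\pr(1,j,\omega_\pr^i))=N$ and hence $v_\pr(\beta(j))=N$.

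\textbf{Even case.} For $j\geq 1$ with $j\equiv i\pmod{q^d-1}$ (so $j\equiv 0\pmod{q-1}$) we have $Z(1,j)=0$, and differentiating \eqref{eqLprZ} at $X=1$ yields
\[ \tfrac{d}{dX}L_\pr(X,j,\omega_\pr^i)_{|X=1}=-(1-\pi_\pr^j)\beta(j), \]
whose $\pr$-valuation is again $v_\pr(\beta(j))$. The strategy is identical to the odd case once we verify that the coefficient-wise congruence \eqref{eqcongrprm} $L_\pr(X,y,\omega_\pr^i)\equiv L_\pr(X,j,\omega_\pr^i)\pmod{\pr^{p^m}}$ in $A_\pr[[X]]$ survives formal differentiation and evaluation at $X=1$. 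Differentiation multiplies the $n$-th coefficient by $n$, which in the characteristic-$p$ ring $A_\pr$ is either zero or a unit, so the congruence on coefficients is preserved. By Theorem \ref{thmtetpr} together with Proposition \ref{prtatalg}, $L_\pr(X,y,\omega_\pr^i)$ lies in the Tate algebra $A_\pr\langle X\rangle$, so its coefficients tend to $0$, and the same holds for the derivative; hence evaluation at $X=1$ converges and the $\pmod{\pr^{p^m}}$ congruence descends to the scalars. The same density/CRT argument then closes the proof.

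\textbf{Main obstacle.} Conceptually the argument is a standard approximation, but the delicate point is the even case: one has to be careful that reduction modulo $\pr^{p^m}$, formal differentiation, and evaluation at $X=1$ all commute. This relies crucially on $L_\pr$ belonging to the Tate algebra (so that $\tfrac{d}{dX}L_\pr$ converges on the closed unit disk) and on the characteristic $p$ of $A_\pr$ (so that $n\mapsto n$ does not spoil $\pr$-adic estimates). Once these are in hand, the equality of infima is immediate.
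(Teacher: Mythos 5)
Your proof is correct and follows essentially the same route as the paper's: apply Corollary \ref{cor2padicLfun} to relate $L_\pr(1,j,\omega_\pr^i)$ (or its derivative at $X=1$ in the even case) to $(1-\pi_\pr^j)\beta(j)$ for admissible integers $j$, note that $1-\pi_\pr^j$ is a $\pr$-adic unit, and invoke $\pr$-adic density of $\{j\geq 1:\ j\equiv i\pmod{q^d-1}\}$ in $\Z_p$ together with continuity in $y$ (the paper states only "the lemma follows noting that the set ... is dense in $\Z_p$"). Your extra care about convergence in $A_\pr\langle X\rangle$, commuting the derivative with the coefficientwise congruence \eqref{eqcongrprm}, and evaluation at $X=1$ flesh out details the paper leaves implicit but adds no new idea.
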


\begin{proof} Let $j\equiv i\pmod{q^d-1}$, then, by Corollary \ref{cor2padicLfun},
\[ L_{\pr}(X,j,\omega_\pr^i)=(1-\pi_\pr^jX^d)Z(X,j) \ .\]
By Definition \ref{DefZXjBGPol}, if $j\not\equiv 0\pmod{q-1}$, we have
\[ (1-\pi_\pr^j)\beta(j)=(1-\pi_\pr^j)Z(1,j)=L_{\pr}(1,j,\omega_\pr^i) \ ,\]
while, if $j\geq 1$ with $j\equiv 0\pmod{q-1}$, we have
\[ \frac{d}{dX}L_{\pr}(X,j,\omega_\pr^i)_{|X=1}=-(1-\pi_\pr^j)\frac{d}{dX}Z(X,j)_{|X=1} =
-(1-\pi_\pr^j)\beta(j) \]
(recall $Z(1,j)=0$ in this case). The lemma follows noting that
the set $\{j\geq 1\,,\ j\equiv i\pmod{q^d-1}\,\}$ is dense in $\Z_p$.
\end{proof}

We can now prove a function field version of the Ferrero-Washington Theorem (see, e.g., \cite[Theorem 7.15]{Wash}),
but its statement is limited to nontrivial characters.

\begin{thm}\label{Sticnon0modp}
For any $1\leq i\leq q^d-2$, one has $\Theta_\infty^\#(1,\wt{\omega}_\pr^i)\not\equiv 0\pmod p$.
\end{thm}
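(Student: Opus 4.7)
The strategy is to apply the injective Sinnott map $s\colon \Lambda/p\Lambda \hookrightarrow Dir(\Z_p, A_\pr)$ of Theorem \ref{teosmap}: in order to show $\Theta_\infty^\#(1,\wt\omega_\pr^i) \not\equiv 0 \pmod p$, it suffices to exhibit a single $y_0\in\Z_p$ at which $s(\overline{\Theta_\infty^\#(1,\wt\omega_\pr^i)})(y_0)$ is nonzero in $A_\pr$. The crucial bridge is Theorem \ref{StickLfun}, which, after substituting $-i$ for $i$ in its statement, gives
\[ s_X\bigl(\Theta_\infty(X,\wt\omega_\pr^i)\bigr)(y) = L_\pr(X,-y,\omega_\pr^{-i}) \ . \]
Because the Stickelberger series lies in the Tate algebra (Proposition \ref{prtatalg}), its coefficients tend to zero in $\Lambda$, and by continuity of $s$ the evaluation at $X=1$ commutes with $s_X$. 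I set $j:=q^d-1-i\in\{1,\dots,q^d-2\}$, so that $j\equiv -i \pmod{q^d-1}$ (and a fortiori $j\equiv -i\pmod{q-1}$), and I take $y_0=-j$.

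If $\wt\omega_\pr^i$ is odd ($i\not\equiv 0\pmod{q-1}$), then $\Theta_\infty^\#=\Theta_\infty$ for this character, so specializing the identity above at $X=1$, $y=-j$ and invoking Corollary \ref{cor2padicLfun} yields
\[ s\bigl(\overline{\Theta_\infty^\#(1,\wt\omega_\pr^i)}\bigr)(-j)=L_\pr(1,j,\omega_\pr^{-i})=(1-\pi_\pr^j)\beta(j) \ . \]
By Lemma \ref{BetaCongrTheta} the Bernoulli-Goss number $\beta(j)$ is a nonzero element of $A$, and since $A\hookrightarrow A_\pr$ and $1-\pi_\pr^j\neq 0$ in $A_\pr$, the right-hand side is nonzero in $A_\pr$.

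If $\wt\omega_\pr^i$ is even and nontrivial ($i\equiv 0\pmod{q-1}$, $i\neq 0$), then $(1-X)\Theta_\infty^\#(X,\wt\omega_\pr^i)=\Theta_\infty(X,\wt\omega_\pr^i)$, so applying $s_X$ gives $(1-X)\,s_X(\Theta_\infty^\#(X,\wt\omega_\pr^i))(y) = L_\pr(X,-y,\omega_\pr^{-i})$. Differentiating in $X$, setting $X=1$, and specializing to $y=-j$ produces
\[ -s\bigl(\overline{\Theta_\infty^\#(1,\wt\omega_\pr^i)}\bigr)(-j)=\tfrac{d}{dX}L_\pr(X,j,\omega_\pr^{-i})\big|_{X=1}=(1-\pi_\pr^j)Z'(1,j) = -(1-\pi_\pr^j)\beta(j) \ , \]
where the middle equality uses Corollary \ref{cor2padicLfun} combined with $Z(1,j)=0$ (valid since $j\geq 1$ and $j\equiv 0\pmod{q-1}$), and the last equality uses $\beta(j)=-Z'(1,j)$ from Definition \ref{DefZXjBGPol}. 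Again the right-hand side is nonzero in $A_\pr$ by Lemma \ref{BetaCongrTheta}.

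The principal obstacle is the even case, where the trivial zero of Lemma \ref{lem4padicLfun} forces passage to the derivative in $X$ and a careful identification of the output with $\beta(j)$ rather than $\zeta_A(-j)$; this mirrors equation \eqref{EqIntro5} of the introduction. Apart from that, the proof is essentially a bookkeeping exercise: Theorem \ref{StickLfun} converts the algebraic statement into one about $\pr$-adic $L$-values, the parity-compatible choice $j=q^d-1-i$ makes Corollary \ref{cor2padicLfun} applicable, and Lemma \ref{BetaCongrTheta} provides the required nonvanishing.
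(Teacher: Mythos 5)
Your argument is correct and follows the same route as the paper: the parity split, specializing the Sinnott identity of Theorem \ref{StickLfun} at a suitable $j$, invoking Corollary \ref{cor2padicLfun} to identify the resulting $\pr$-adic $L$-value (or its $X$-derivative at $X=1$) with $(1-\pi_\pr^j)\beta(j)$, and then Lemma \ref{BetaCongrTheta} plus injectivity of $s$. The only cosmetic difference is that you fix the explicit test point $j=q^d-1-i$, whereas the paper reaches the same nonvanishing by first introducing the invariant $m_\pr$ and quoting Lemma \ref{lem5padicLfun}; these are two phrasings of the same computation.
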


\begin{proof}
We consider two cases depending on the type of the character $\wt{\omega}_\pr^i$.
Recall that, by Lemma \ref{BetaCongrTheta}, $\beta(j)\neq 0$ for any $j\geq 0$.

\noindent {\bf Case 1:} {\em $i\not\equiv 0\pmod{q-1}$, i.e., $\wt{\omega}_\pr^i$ is odd.}\\
In this case $\Theta_\infty^\#=\Theta_\infty$. The previous lemma
shows that $L_{\pr}(1,j,\omega_\pr^{-i})$ is nonzero, hence, by
Theorem \ref{StickLfun},
$s_X(\Theta_\infty(X,\wt{\omega}_\pr^i))(-j)_{|X=1}$ is nonzero as
well, in particular one finds out that
$s(\Theta_{\infty}(1,\wt{\omega}_{\pr}^i))\neq 0$. Since the map $s$
is injective on $\L/p\L$ (by Theorem \ref{teosmap}), it follows that
\[ \Theta_\infty(1,\wt{\omega}_\pr^i)\not\equiv 0\pmod p \ .\]

\noindent {\bf Case 2:} {\em $i\equiv 0\pmod{q-1}$, i.e.,
$\wt{\omega}_\pr^i$ is even but $\neq \chi_0$.}\\
In this case $\Theta_\infty^\#=\ds{\frac{\Theta_\infty}{1-X}}$, hence
\[ \Theta_\infty^\#(1,\wt{\omega}_\pr^i) = \frac{d}{dX}\Theta_\infty(X,\wt{\omega}_\pr^i)_{|X=1}\ .\]
The previous lemma shows that
\[ \frac{d}{dX}L_{\pr}(X,j,\omega_\pr^{-i})_{|X=1}\neq 0 \]
and Theorem \ref{StickLfun} yields
\[ s_X\left(\frac{d}{dX}\Theta_\infty(X,\wt{\omega}_\pr^i)\right)(-j)=\frac{d}{dX}s_X(\Theta_\infty(X,\wt{\omega}_\pr^i))(-j)=
\frac{d}{dX}L_{\pr}(X,j,\omega_\pr^{-i}) \ .\]
We get the claim as in the previous case: specializing at $X=1$ and using the injectivity of $s$.
\end{proof}

As a consequence we find that the $p^n$-torsion of $\calc(\chi)$ looks like a pseudo-null module in the non-noetherian Iwasawa
algebra $\L$.

\begin{cor}
For any character $\chi\neq \chi_0$, $p$ does not divide $\Fitt_{\L} (\calc(\chi))$ and the $p^n$-torsion
modules $\calc(\chi)[p^n]$ have at least two relatively prime annihilators.
\end{cor}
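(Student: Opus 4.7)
The plan is to string together the Iwasawa Main Conjecture (Theorem~\ref{IMClevel}) with the non-vanishing result of Theorem~\ref{Sticnon0modp}. First I would use the IMC to rewrite $\Fitt_\L(\calc(\chi))$ as the principal ideal $(\Theta_\infty^\#(1,\chi))$. Theorem~\ref{Sticnon0modp} then says exactly that $\Theta_\infty^\#(1,\chi)\notin p\L$, whence $\Fitt_\L(\calc(\chi))\not\subseteq p\L$. This is the first claim.

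For the second assertion, I would exhibit the two desired annihilators of $\calc(\chi)[p^n]$ as follows: $p^n$ kills $\calc(\chi)[p^n]$ tautologically, while $\Theta_\infty^\#(1,\chi)$ annihilates the whole of $\calc(\chi)$ (since it generates its Fitting ideal by Theorem~\ref{IMClevel}, and the Fitting ideal is contained in the annihilator), and hence also the submodule $\calc(\chi)[p^n]$. Relative primality of $p^n$ and $\Theta_\infty^\#(1,\chi)$ then follows from two observations: (a) $\L/p\L=\F_\pr[[\G]]$ is an integral domain, which one checks by writing it as the inverse limit of the domains $\F_\pr[[T_1,\dots,T_m]]$ along the evaluation-at-zero surjections and noting that a nonzero element of the limit must project to a nonzero element at every sufficiently large level, so that a product vanishing in the limit forces one of the factors to vanish levelwise; and (b) the image of $\Theta_\infty^\#(1,\chi)$ in $\L/p\L$ is nonzero by Theorem~\ref{Sticnon0modp}. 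Put together, any common divisor $c\in\L$ of $p^n$ and $\Theta_\infty^\#(1,\chi)$ must be a unit: from $c\mid\Theta_\infty^\#(1,\chi)$ and the nonvanishing of $\overline{\Theta_\infty^\#(1,\chi)}$ in $\L/p\L$ one gets $\bar c\neq 0$, i.e.\ $c\notin p\L$; but then in a factorization $p^n=cd$ the relation $\bar c\bar d=0$ in the domain $\L/p\L$ forces $\bar d=0$, hence $p\mid d$ and one can cancel a $p$, descending to $p^{n-1}=cd'$; iterating yields $c\mid 1$, so $c\in\L^\times$.

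The only delicate point — and the main ``obstacle'' — is ensuring that the naive notion of relative primality behaves correctly in the non-noetherian algebra $\L$. The whole argument hinges on the fact that $\L/p\L$ is a domain, which is not entirely obvious a priori for a completed group ring on $\Z_p^\infty$, but is straightforward once one uses the natural inverse limit description. Beyond this the proof is a mechanical combination of the IMC with Theorem~\ref{Sticnon0modp}, and no further input is needed.
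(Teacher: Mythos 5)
Your proof is correct and is precisely the argument the authors had in mind: the paper's own proof simply says the statement is an ``easy consequence'' of Theorem~\ref{Sticnon0modp} and Theorem~\ref{IMClevel}, and that easy consequence is exactly what you have spelled out. The only nontrivial ingredient you needed to supply --- that $\L/p\L=\F_\pr[[\G]]$ is a domain, so that from $p^n=cd$ with $\bar c\neq 0$ one can peel off a factor of $p$ at a time in the $p$-torsion-free ring $\L$ --- is handled correctly by the inverse-limit description of $\F_\pr[[\G]]$ over the power-series domains $\F_\pr[[T_1,\dots,T_m]]$.
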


\begin{proof}
Easy consequences of the previous theorem and Theorem \ref{IMClevel}
\end{proof}

\begin{rems}
\begin{itemize} \item[{}]
\item[{\bf 1.}] Since we are working in the non-noetherian algebra $\L$, the module $\mathcal{C}(\chi)[p^\infty]$
may not be finitely generated on $W$. The last statement (recalling pseudo-nullity for noetherian Iwasawa
algebras) might not be true if we consider the whole set of $p$-power torsion points $\mathcal{C}(\chi)[p^\infty]$.
However a combination of Proposition \ref{lem4.1} and the previous corollary suggests
to investigate the possibility that $\mathcal{C}(\chi)[p^\infty]=0$.
\item[{\bf 2.}] In \cite[page 4446]{GuoShu} the authors provide a formula for the class number growth in subextensions of the
$\pr$-cyclotomic extension and note that the growth can be exponential, i.e., the direct analog of the Ferrero-Washington
Theorem ($\mu=0$) does not hold for function fields.
\end{itemize}
\end{rems}

An estimate for $m_\pr(i)$ is provided by the following

\begin{lem}\label{Estmi} For any $1\leq i\leq q^d-2$ with $i\not\equiv 0\pmod{q-1}$, one has
\[ m_{\pr}(i) \leq \frac{i}{d}\log_q(i+1)\ .\]
\end{lem}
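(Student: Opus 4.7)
The plan is to take $j=i$ as the representative in the congruence class $\pmod{q^d-1}$. By Lemma~\ref{lem5padicLfun}, this immediately gives $m_\pr(i) \leq v_\pr(\beta(i))$, so it suffices to bound $v_\pr(\beta(i))$ from above by $\frac{i}{d}\log_q(i+1)$.

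Since $i \not\equiv 0\pmod{q-1}$, Definition~\ref{DefZXjBGPol} yields $\beta(i) = Z(1,i) = \sum_{n\geq 0} S_n(i)\in A$. I would then estimate $\deg_\theta \beta(i)$ directly. Lemma~\ref{Simon} states that $S_n(i)=0$ as soon as $1\leq i<q^n-1$, which occurs precisely when $n>\log_q(i+1)$; hence the sum defining $\beta(i)$ is supported on those $n$ with $n\leq \log_q(i+1)$. For each such $n$, every $a\in A_{+,n}$ is monic of degree $n$, so $\deg_\theta(a^i)=ni$ and therefore $\deg_\theta S_n(i) \leq ni$. Combining these two observations,
\[ \deg_\theta \beta(i) \leq \max_{n\leq \log_q(i+1)} ni \;\leq\; i\,\log_q(i+1). \]

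To translate a degree bound into a $\pr$-adic valuation bound, I would invoke the elementary fact that for any nonzero $f\in A=\F[\theta]$ one has $v_\pr(f)\cdot d \leq \deg_\theta f$, because $\pi_\pr$ has degree $d$ and $\pi_\pr^{v_\pr(f)}$ divides $f$ in $A$. Lemma~\ref{BetaCongrTheta} provides the nonvanishing $\beta(i)\neq 0$ needed to apply this, and we conclude
\[ m_\pr(i) \leq v_\pr(\beta(i)) \leq \frac{\deg_\theta \beta(i)}{d} \leq \frac{i}{d}\log_q(i+1). \]

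There is no real obstacle: the argument is essentially a one-line degree count, with all necessary inputs (the vanishing criterion for $S_n(i)$, the nonvanishing of $\beta(i)$, and the identification $m_\pr(i) = \inf_j v_\pr(\beta(j))$ over $j\equiv i\pmod{q^d-1}$) already available in the excerpt. The only mild care to take is ensuring that $i\geq 1$ so that $j=i$ is a legitimate choice in the infimum, which is guaranteed by the hypothesis $1\leq i \leq q^d-2$.
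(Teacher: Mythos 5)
Your proposal is correct and follows essentially the same route as the paper: use Lemma~\ref{Simon} to see that only $n\leq\log_q(i+1)$ contribute to $\beta(i)$, bound $\deg_\theta\beta(i)\leq i\log_q(i+1)$, and convert the degree bound to a $\pr$-adic valuation bound by $d\cdot v_\pr(f)\leq\deg_\theta(f)$, finishing with Lemma~\ref{lem5padicLfun}. The paper's proof is just a terser version of exactly this argument, and your additional remarks (the nonvanishing from Lemma~\ref{BetaCongrTheta} and the legitimacy of taking $j=i$ since $i\geq 1$) are valid points that the paper leaves implicit.
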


\begin{proof} By Lemma \ref{Simon}, we have
\[ S_n(j)=\sum_{a\in A_{+,n}}a^j=0 \quad{\rm for\ any}\ n > \log_q(j+1) \ .\]
Hence
\[ \beta(j)=\sum_{n\geq 0} S_n(j) = 1+\sum_{n=1}^{\left[ \log_q(j+1) \right]}
\sum_{a\in A_{+,n}}a^j \]
(where $[*]$ means the integral part of $*$). Clearly we have
\[ \deg(\beta(j)) \leq \left[\log_q(j+1)\right]j \]
and the result follows from Lemma \ref{lem5padicLfun}.
\end{proof}

In the classical setting of $p$-adic $L$-functions defined for the cyclotomic $\Z_p$-extension of a number field (see, for example,
\cite[Chapter 5]{Wash}), there is the following natural problem on the $p$-adic valuation of values of $p$-adic $L$-functions
(related to $p$-adic valuations of generalized Bernoulli numbers)

\begin{quest}\label{conj3.1} Let $\chi$ be an even character in $\Hom(\Gal(\Q(\bmu_p)/\Q),\Z_p^*)$,
then is it true that
\[ \Inf\left\{ v_p(L_p(y,\chi))\,:\ y\in\Z_p\,\right\} \leq 1\ ?\]
\end{quest}

In the following we still consider non trivial characters only.

\begin{cor}[Arithmetic properties of Bernoulli-Goss numbers]\label{ArPropBGNum}
Let $1\leq i\leq q^d-2$ and define $N_\pr(i):=\Inf\{n\geq 0\,:\,\Theta_n^\#(1,\wt{\omega}_\pr^i)\not\equiv 0\pmod{p}\,\}$,
which is well defined because of Theorem \ref{Sticnon0modp}. Then
\[ N_\pr(i)\leq \Inf \{ v_\pr(\beta(j))\,:\ j\geq 1,\,\ j\equiv -i\pmod{q^d-1}\,\} = m_\pr(q^d-1-i) = m_\pr(-i) \ .\]
\end{cor}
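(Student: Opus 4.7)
The chained equalities $m_\pr(q^d-1-i)=m_\pr(-i)$ and their common value being $\Inf\{v_\pr(\beta(j)):\,j\geq 1,\ j\equiv -i\!\pmod{q^d-1}\}$ are immediate: the first is built into Definition \ref{Defm(i)} (periodicity modulo $q^d-1$), while the second is the content of Lemma \ref{lem5padicLfun} applied with $q^d-1-i$ in place of $i$. The substance of the corollary is therefore the inequality $N_\pr(i)\leq m_\pr(-i)$, and the plan is to use Theorem \ref{StickLfun} to transport the vanishing condition defining $N_\pr(i)$ into a $\pr$-adic lower bound on $L_\pr(X,j,\omega_\pr^{-i})$ at $X=1$, and then read off the desired bound on $v_\pr(\beta(j))$ via Corollary \ref{cor2padicLfun}.

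The first step is to write down an explicit Dirichlet function representing $\Theta_\infty^\#(1,\wt\omega_\pr^i)$. Starting from $s_X(\Theta_\infty(X,\wt\omega_\pr^i))(y)=L_\pr(X,-y,\omega_\pr^{-i})$ (Theorem \ref{StickLfun}) and the identity $\Theta_\infty(X,\wt\omega_\pr^i)=(1-X)^{\varepsilon}\Theta_\infty^\#(X,\wt\omega_\pr^i)$, with $\varepsilon=0$ for odd $\wt\omega_\pr^i$ and $\varepsilon=1$ for even nontrivial $\wt\omega_\pr^i$, the $\F_\pr$-linearity of $s_X$ (it commutes with multiplication by $1-X$) yields
\begin{equation*}
(1-X)^{\varepsilon}s_X(\Theta_\infty^\#(X,\wt\omega_\pr^i))(y)=L_\pr(X,-y,\omega_\pr^{-i})\ .
\end{equation*}
For odd $\wt\omega_\pr^i$ we specialize at $X=1$ and $y=-j$ (with $j\geq 1$, $j\equiv -i\pmod{q^d-1}$, $j\not\equiv 0\pmod{q-1}$), obtaining $s(\Theta_\infty^\#(1,\wt\omega_\pr^i))(-j)=L_\pr(1,j,\omega_\pr^{-i})=(1-\pi_\pr^j)\beta(j)$ from Corollary \ref{cor2padicLfun}. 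For even $\wt\omega_\pr^i\neq\chi_0$ we instead differentiate in $X$ and then set $X=1$, so that $-s(\Theta_\infty^\#(1,\wt\omega_\pr^i))(y)=\frac{d}{dX}L_\pr(X,-y,\omega_\pr^{-i})|_{X=1}$; at $y=-j$ with $j\equiv 0\pmod{q-1}$ this evaluates to $-(1-\pi_\pr^j)\beta(j)$, exactly as in the proof of Lemma \ref{lem5padicLfun}. Since $v_\pr(1-\pi_\pr^j)=0$ for $j\geq 1$, in both cases
\begin{equation*}
v_\pr\bigl(s(\Theta_\infty^\#(1,\wt\omega_\pr^i))(-j)\bigr)=v_\pr(\beta(j))\ .
\end{equation*}

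To conclude, suppose $n<N_\pr(i)$, i.e., $\Theta_n^\#(1,\wt\omega_\pr^i)\equiv 0\pmod p$ in $W[\G_n]$. The commutative square linking $s$ with $s_n$ (together with $s=\plim{n}s_n$) shows that the reduction of $s(\Theta_\infty^\#(1,\wt\omega_\pr^i))$ modulo $\pr^{n+1}$ coincides with $s_n$ applied to the reduction of $\Theta_n^\#(1,\wt\omega_\pr^i)$ modulo $p$, hence vanishes in $C^0(\Z_p,A/\pr^{n+1})$. Therefore $v_\pr(s(\Theta_\infty^\#(1,\wt\omega_\pr^i))(-j))\geq n+1$ for every admissible $j$, and by the previous paragraph $v_\pr(\beta(j))\geq n+1$. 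Taking $n=N_\pr(i)-1$ gives $v_\pr(\beta(j))\geq N_\pr(i)$; passing to the infimum over $j$ completes the proof. The main subtlety is the even case, where the trivial zero of $L_\pr(1,\,\cdot\,,\omega_\pr^{-i})$ from Lemma \ref{lem4padicLfun} forces one to differentiate in $X$ before specializing; the rest is a direct assembly of Theorem \ref{StickLfun}, Corollary \ref{cor2padicLfun}, and the approximation property $s=\plim{n}s_n$.
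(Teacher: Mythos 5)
Your proof is correct and follows essentially the same route as the paper's: Theorem \ref{StickLfun}, the approximation $s=\plim{n}s_n$, and Corollary \ref{cor2padicLfun}/Lemma \ref{lem5padicLfun} for the explicit evaluation at $X=1$ (with differentiation in the even case). The only difference is that you argue by contraposition — from $\Theta_n^\#(1,\wt\omega_\pr^i)\equiv 0\pmod p$ for $n<N_\pr(i)$ you deduce $v_\pr(\beta(j))\geq N_\pr(i)$ — whereas the paper directly exhibits $s_{m_\pr(-i)}(\Theta_{m_\pr(-i)}^\#(1,\wt\omega_\pr^i))(-y_0)\neq 0$ for a suitable $y_0$; these are logically equivalent.
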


\begin{proof}
Assume $i\not\equiv 0\pmod{q-1}$: by definition of $m_\pr(i)$ (or by Lemma \ref{lem5padicLfun}),
there exists $y_0\in\Z_p$ such that $L_\pr(1,y_0,\omega_\pr^{-i})\not\equiv 0\pmod{\pr^{m_\pr(-i)+1}}$, while
for any $y\in\Z_p$ we have $L_\pr(1,y,\omega_\pr^{-i})\equiv 0\pmod{\pr^{m_\pr(-i)}}$.

\noindent In Section \ref{SecSinn} we saw that the map $s$ can be computed by taking the limit on the (induced)
maps
\[ s_n : W/pW[\G_n]\rightarrow C^0(\Z_p,A_\pr/\pr^{n+1})\ .\]
Therefore, for $n < m_\pr(-i)$, we obtain that $s_n(\Theta_n^\#(1,\wt{\omega}_\pr^i))$ is the zero function,
while
\[ s_{m_{\pr}(-i)}(\Theta_{m_\pr(-i)}^\#(1,\wt{\omega}_\pr^i))(-y_0)\neq  0 \ .\]
Since the maps $s_n$ are not injective in general (see Proposition
\ref{snNotInj}) we only obtain an inequality
\[ N_\pr (i)=\Inf\{n\geq 0 \,:\ \Theta_n^\#(1,\chi)\not\equiv 0\pmod{pW[\G_n]}\,\} \leq m_\pr(-i) \ .\]
The proof for even nontrivial characters (i.e., for $i\equiv 0\pmod{q-1}$, $i\neq 0$) is similar.
\end{proof}

\begin{rem} We can define similar arithmetic invariants for the $\Z_p$-cyclotomic extension of a number field $k$.
For simplicity we just consider $k=\Q$ with $p\neq 2$, the generalization is straightforward.
Take a character $\chi$ in $\Hom(\Gal(\Q(\bmu_p)/\Q),\Z_p^*)$ and let
\[ L_p(y,\chi)=f((1+p)^y-1,\chi)\ ,\ \ f(T,\chi)\in \Z_p[[T]]\]
be the associated $p$-adic $L$-function. Let $\Q_n:=\Q(\boldsymbol{\mu}_{p^{n+1}})$ be the layers of the $\Z_p$-cyclotomic
extension and let $\CaCl(\Q_\infty):=\plim{n} \CaCl(\Q_n)\otimes_{\Z}\Z_p$. The Iwasawa Main Conjecture in this setting
(Mazur-Wiles Theorem \cite{MW}, for an overview of Rubin's proof using Kolyvagin's method see, e.g., \cite[Chapter 15]{Wash})
reads as
\[ \Fitt_{\Z_p[[T]]} (\CaCl(\Q_\infty)(\omega_\pr\chi^{-1}))=(f(T,\chi)) \ ,\]
and we can define
\[ m_p(\chi):=\Inf \{\,v_p(L_p(y,\chi))\,:\ y\in\Z_p\,\} \in \N \ .\]
Finally let $\Theta_{\Q_n/\Q,p}(\chi)$ be the Stickelberger element (see \cite[Chapter 6]{Wash}): again via the
Main Conjecture, we have
\[ \Fitt_{\Z_p[\Gal(\Q_n/\Q)]}(\CaCl(\Q_n)(\omega_\pr\chi^{-1}))=(\Theta_{\Q_n/\Q,p}(\chi)) \ .\]
The Ferrero-Washington Theorem implies that
\[ N_p(\chi):=\Inf\{\,n\geq 0\,:\ \Theta_{\Q_n/\Q,p}(\chi)\not\equiv 0\pmod{p}\,\} \]
is well defined.

\noindent At present it is not clear whether there is any kind of relation between $m_p(\chi^{-1})$, the $p$-adic valuations
of generalized Bernoulli numbers and $N_p(\chi)$ in this setting.
\end{rem}

\appendix
\begin{section}{Computing Fitting ideals with a totally ramified place}
This appendix generalizes some of the results in \cite[Sections 2
and 3]{GP2} (with $\ell=p$) to the case in which there is a totally
ramified place $v$ not necessarily $\F_q$-rational.

\noindent Let $X\rightarrow Y$ be a finite abelian Galois covering of smooth projective curves over a finite field $\F$
(of order $q=p^k$ with $p$ prime) with Galois group $G$ and unramified outside a finite set of places $S$.
Equivalently we can consider a finite abelian Galois extensions
$L/F$ (where $L$ and $F$ are the function fields $\F(X)$ and $\F(Y)$ respectively) with Galois group $G$. We always assume
that $\F$ is the field of constants of $X$, i.e., the covering $X/Y$ is geometric. Let $S$ be a non-empty finite set
of closed points on $X$ containing all the ramified primes of $X/Y$.

\noindent We recall the short exact sequence of $\Z_p[G][[G_{\F}]]$-modules (analogous to \eqref{SeqL})
\begin{equation}\label{eqap0}
0\rightarrow T_p(Jac(X)(\overline{\F}))\rightarrow T_p(\calm_{\ov{S}})\rightarrow L\rightarrow 0 \ ,
\end{equation}
where $Jac(X)(\overline{\F})$ are the $\ov{\F}$-points of the Jacobian of $X$, $T_p(Jac(X)(\ov{\F}))$ is the usual Tate
module at $p$ and $L$ is the kernel of the degree map $\Z_p[\ov{S}]\rightarrow \Z_p$ ($\ov{S}$ is the set of points in
$X\times_{\F}\ov{\F}$ above the points $v\in S$). The central module $T_p(\calm_{\ov{S}})$ is related to the $p$-adic
realization (or $p$-adic Tate module) associated to the Picard 1-motive $M_{\ov{S},\ov{\Sigma}}$
($\Sigma$ is a set of closed points of $X$ disjoint from $S$): a detailed description of $M_{\ov{S},\ov{\Sigma}}$
in terms of divisor classes quite useful for computations is provided in \cite[Section 2]{GP1}.
For any set $\Sigma$ there is a short exact sequence (see \cite[Equation (2)]{GP2})
\[ 0\rightarrow T_p(\tau_{\Sigma}(\overline{\F}))\rightarrow T_p(\calm_{\ov{S},\ov{\Sigma}})\rightarrow
T_p(\calm_{\ov{S}})\rightarrow 0 \ ,\]
but, since we are only considering the case $p=char(F)$, the toric part $T_p(\tau_{\Sigma}(\ov{\F}))$
vanishes (\cite[Remark 2.7]{GP1}) and $\Sigma$ has no concrete influence on the module we are interested in,
i.e., we can assume $\Sigma=\emptyset$. In this setting the main result of \cite{GP1} (\cite[Theorem 4.3]{GP1},
see also \cite[Lemma 2.3]{GP2}) reads as follows

\begin{thm}[Greither-Popescu]\label{teoap1} The Tate module $T_p(\calm_{\ov{S}})$ is cohomologically trivial
over $G$ hence (since it is free of finite rank over $\Z_p$)
projective over $\Z_p[G]$. Moreover the Fitting ideal of
$T_p(\calm_{\ov{S}})$ over $\Z_p[G][[G_\F]]$ is principal and generated
by $\Theta_{L/F,S}(\g^{-1})$, where $\g$ is the arithmetic Frobenius
in $G_\F=\Gal(\ov{\F}/\F)$.
\end{thm}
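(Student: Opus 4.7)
The plan is to treat the two assertions separately: first the cohomological triviality of $T_p(\calm_{\ov{S}})$ as a $\Z_p[G]$-module (which, combined with freeness over $\Z_p$, yields $\Z_p[G]$-projectivity via Rim's theorem), then the identification of the Fitting ideal.

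For the first step, I would use the short exact sequence \eqref{eqap0} and reduce to the Tate cohomology of $L$ and of $T_p(Jac(X)(\ov\F))$ over every subgroup $H\leq G$. The module $L$ decomposes along $G$-orbits in $\ov{S}$: each orbit contributes an induced module $\mathrm{Ind}_{G_v}^{G}\Z_p$, whose Tate cohomology over $H$ reduces by Shapiro's lemma to a computation over the decomposition (and inertia) group of $v$. At unramified orbits this is automatically trivial; at ramified places a direct Koszul-type calculation using the trivial action of inertia on $\Z_p$ kills $\hat H^*$. For the Jacobian's Tate module, I would compare it with the first \'etale cohomology of the constant sheaf $\Z/p^n$ on $X\otimes_\F\ov\F$ via Kummer theory on the Jacobian, exploiting the fact that this cohomology group is $G$-cohomologically trivial because $X\to Y$ is Galois. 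Putting these pieces together yields the vanishing of $\hat H^*(H, T_p(\calm_{\ov S}))$ for all $H\leq G$, hence $\Z_p[G]$-projectivity.

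For the Fitting ideal computation, once projectivity over $\Z_p[G]$ is in hand, $T_p(\calm_{\ov{S}})$ admits, as a $\Z_p[G][[G_\F]]$-module, a two-term projective resolution
\[
0\to M\xrightarrow{\g-\phi}M\to T_p(\calm_{\ov{S}})\to 0,
\]
where $M:=T_p(\calm_{\ov{S}})\otimes_{\Z_p[G]}\Z_p[G][[G_\F]]$ and $\phi$ denotes the endomorphism induced by the geometric Frobenius on $T_p$. Consequently the Fitting ideal is principal, generated by the reduced determinant $\det(\g-\phi\mid T_p(\calm_{\ov{S}}))$, which after the substitution $X=\g^{-1}$ becomes (up to a unit) the characteristic polynomial of Frobenius on $T_p(\calm_{\ov{S}})$. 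Character by character on $G$, the Grothendieck--Lefschetz trace formula for curves (equivalently Weil's theorem on $L$-functions of curves over $\F$) identifies this characteristic polynomial with the Euler product $\Theta_{L/F,S}(\g^{-1})$, as desired.

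The hardest part will be the cohomological triviality step at the ramified places, and in particular the totally ramified place $v$ that the appendix subsequently specializes to: one has to separate carefully the arithmetic $G_\F$-action from the geometric inertia action of $G$, and ensure that the induction computations for $L$ continue to function when the orbit of $v$ in $\ov S$ is not pointwise fixed by $G_\F$. Once that compatibility is settled, the Fitting ideal identification reduces to a formal application of Weil's theorem on each $\chi$-isotypic component of $G$.
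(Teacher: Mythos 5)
This theorem is not proved in the paper: it is cited verbatim as \cite[Theorem 4.3]{GP1} (see also \cite[Lemma 2.3]{GP2}), serving purely as the starting point for the appendix. So there is no internal proof to compare with, and your proposal must be judged on its own merits.

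The cohomological-triviality half of your argument has a fatal structural flaw. Working from the short exact sequence \eqref{eqap0} you try to show $\widehat H^\ast(H, T_p(Jac(X)(\ov\F)))$ and $\widehat H^\ast(H, L)$ both vanish and then conclude for the middle term. But neither outer term is $G$-cohomologically trivial in general: the whole point of the 1-motive $\calm_{\ov S}$ is precisely that the two obstructions cancel in the extension. Concretely, at a ramified orbit you claim a ``Koszul-type calculation using the trivial action of inertia on $\Z_p$ kills $\widehat H^\ast$'', but trivial action does the opposite: $\widehat H^0(I_v,\Z_p)\simeq\Z_p/|I_v|\Z_p\neq 0$ whenever $|I_v|>1$. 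Likewise the assertion that $H^1_{et}(X\otimes_\F\ov\F,\Z/p^n)$ is $G$-cohomologically trivial ``because $X\to Y$ is Galois'' is a non-sequitur; $G$ does not act freely on $X$ at the ramified points, and indeed the failure of cohomological triviality of the Jacobian Tate module in ramified covers is exactly what underlies the genus/conductor-discriminant formula. The correct route (as in GP1) passes through the open curve $U=X\setminus\ov S$, where the $G$-action \emph{is} free, and identifies $T_p(\calm_{\ov S})$ with an \'etale-cohomological object of $U$ for which the cohomological triviality can be established. An argument trying to prove triviality on each piece of \eqref{eqap0} separately cannot succeed. There is also a smaller issue with the decomposition of $L$: $H_v=\Z_p[\ov X(v)]$ carries a simultaneous $G\times G_\F$-action and is only $\mathrm{Ind}_{G_v}^{G}$ of something after one has settled the $G_\F$-structure, so the Shapiro reduction as stated is not clean.

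The Fitting-ideal half is essentially the right idea: once $T_p(\calm_{\ov S})$ is known to be $\Z_p[G]$-projective with a commuting continuous $G_\F$-action, the two-term $\Z_p[G][[G_\F]]$-resolution by $\g-\phi$ is the standard device and does give a principal Fitting ideal equal to the (reduced) determinant. However, identifying that determinant with $\Theta_{L/F,S}(\g^{-1})$ is not a ``formal application of Weil's theorem character by character''; one must first know that the $G_\F$-characteristic polynomial of $T_p(\calm_{\ov S})$ agrees with the Euler product over places outside $S$ (equivalently, that the 1-motive realizes the $S$-truncated zeta/$L$-functions), which is a substantive arithmetic input that your proposal presumes rather than supplies.
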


\noindent Assume once and for all that there exists a point $v_1\in S$ which is totally ramified in $X/Y$.
For a place $v\in Y$ denote by $G_v$ the decomposition subgroup of $v$ in $G$ and by $I_v$ its inertia
subgroup. Put $H_v:=\Z_p[\ov{X}(v)]$ (where $\ov{X}(v)$ denote the set of
points of $X\times_{\F}\overline{\F}$ above $v$): it is a $\Z_p[G][[G_\F]]$-module and we observe that
$\Z_p[\ov{S}]=\oplus_{v\in S} H_v$. Let $\Fr_v$ denote the Frobenius of $v$ in $G$ (if $v$ is unramified,
in the ramified case any lift of a Frobenius of $v$ in $G_v/I_v$ will do) and put
$e_v(u):=1-\Fr_v^{-1}u^{d_v}\in\Z[G][u]$ (where $d_v$ is the degree of the place $v$).

\noindent We recall \cite[Lemmas 2.1 and 2.2]{GP2}.

\begin{lem} $H_v$ is a cyclic $\Z_p[G][[G_\F]]$-module and we have:
\begin{enumerate}
\item if $v\in Y$ is unramified in $X$, then
\[ \Fitt_{\Z_p[G][[G_\F]]}(H_v)=(e_v(\g^{-1}))\quad {\rm and}\quad H_v\simeq \Z_p[G][[G_\F]]/(e_v(\g^{-1}))\ ;\]
\item if $v\in Y$ is ramified in $X$, then
$\Fitt_{\Z_p[G][[G_\F]]}(H_v)=(e_v(\g^{-1}),\tau-1\,:\,\tau\in I_v)$ and
\[ H_v\simeq \Z_p[G][[G_\F]]/(e_v(\g^{-1}),\tau-1\,:\,\tau\in I_v)\simeq \Z_p[G/I_v][[G_\F]]/(e_v(\g^{-1})) \ .\]
\end{enumerate}
\end{lem}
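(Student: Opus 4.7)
The plan is to exploit that $\ov{X}(v)$ is a single orbit under the natural action of $G\times G_\F$, which immediately makes $H_v$ a cyclic module over $R:=\Z_p[G][[G_\F]]$ generated by any chosen geometric point $\tilde w$ above $v$. For a cyclic module the Fitting ideal coincides with the annihilator, and this annihilator is the closed ideal generated by $\{h-1 : h\in\mathrm{Stab}_{G\times G_\F}(\tilde w)\}$. So the whole proof reduces to identifying this stabilizer explicitly and translating its generators into elements of $R$.

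First I would record the standard Galois-theoretic picture: $G$ permutes transitively the finitely many places of $X$ above $v$ with stabilizer the decomposition group $G_w=G_v$ at a chosen $w\mid v$, while $G_\F$ permutes transitively the $\ov{\F}$-lifts of $w$, these being identified with the $\F$-embeddings $\iota\colon \F_{q^{d_w}}\iri \ov{\F}$ of the residue field (where $d_w=d_v\cdot|G_v/I_v|$). Combining the two actions yields transitivity of $G\times G_\F$ on $\ov{X}(v)$ and hence cyclicity of $H_v$ over $R$. A pair $(g,\sigma)\in G\times G_\F$ fixes $\tilde w=(w,\iota)$ precisely when $g\in G_w$ and $\sigma\circ\iota=\iota\circ g_{\mathrm{res}}$, where $g_{\mathrm{res}}$ denotes the induced action of $g$ on the residue field.

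From this description two families of stabilizing elements are immediate: the pairs $(\tau,1)$ with $\tau\in I_v$ (because inertia acts trivially on residues) and the pair $(\Fr_v,\g^{d_v})$ for any chosen lift of the Frobenius class in $G_v/I_v$ (because such a lift acts on $\F_{q^{d_w}}$ as $x\mapsto x^{q^{d_v}}$, which is exactly the action of $\g^{d_v}$ on $\ov{\F}$). Raising $(\Fr_v,\g^{d_v})$ to the power $f:=|G_v/I_v|$ lands in $I_v\times\langle\g^{d_w}\rangle$, so the ``vertical'' kernel $\{1\}\times\overline{\langle\g^{d_w}\rangle}$ of the projection $\mathrm{Stab}\twoheadrightarrow G_w$ is already contained in the closed subgroup generated by the two families above; hence these families generate the whole stabilizer topologically. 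Translating into $R$, the identity
\[ 1-\Fr_v\g^{d_v}\ =\ -\Fr_v\g^{d_v}\cdot(1-\Fr_v^{-1}\g^{-d_v})\ =\ u\cdot e_v(\g^{-1}), \qquad u\in R^{\times}, \]
shows that $(\Fr_v\g^{d_v}-1)=(e_v(\g^{-1}))$ as ideals in $R$. Adjoining the elements $\tau-1$ for $\tau\in I_v$ produces the inertia contribution in the ramified case (and is vacuous in the unramified one); the final isomorphism in the ramified case is then immediate from $R/(\tau-1:\tau\in I_v)\simeq \Z_p[G/I_v][[G_\F]]$.

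The delicate point I expect is pinning down, once and for all, the conventions for arithmetic versus geometric Frobenius and for the left/right action on the embeddings $\iota$, so that the sign inside $e_v(\g^{-1})$ comes out with the exponent the author intends. Once that is settled everything is formal: cyclicity yields a single ideal of relations, the stabilizer has a clean closed generating set, and the Fitting ideal of a cyclic module is just its annihilator.
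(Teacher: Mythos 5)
Your proof is correct, and it is in fact the natural argument. One thing to note for comparison: the paper does not prove this lemma at all, it simply recalls it from Greither--Popescu (\cite[Lemmas 2.1 and 2.2]{GP2}), so there is no in-text proof to match against; your argument reconstructs essentially what one finds there, namely that $\ov{X}(v)$ is a single $G\times G_\F$-orbit, so $H_v$ is cyclic over $\Z_p[G][[G_\F]]$ and its Fitting ideal equals the kernel of the surjection $\Z_p[G][[G_\F]]\twoheadrightarrow H_v$ given by evaluation at a chosen point, which is the closed ideal generated by $h-1$ for $h$ in the stabilizer.

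Two small remarks on the details you flagged. First, the sign convention works out exactly as you hoped: the paper fixes $\g$ to be the \emph{arithmetic} Frobenius $x\mapsto x^q$, and the arithmetic Frobenius of $G_v/I_v$ acts on $\kappa(w)$ as $x\mapsto x^{q^{d_v}}$, so the pair $(\Fr_v,\g^{d_v})$ does stabilize $\tilde w$ with your identification of residue fields, and $1-\Fr_v\g^{d_v}=-\Fr_v\g^{d_v}\cdot e_v(\g^{-1})$ with $-\Fr_v\g^{d_v}$ a unit. Second, your topological-generation argument is complete: the stabilizer projects \emph{onto} $G_v$ (for each $g\in G_v$ there is a $\sigma$ agreeing with $g$ on $\kappa(w)$) with kernel $\{1\}\times\overline{\langle\g^{d_w}\rangle}$, the target is generated by $I_v$ together with $\Fr_v$, each of which you lift, and the kernel is recovered from $(\Fr_v,\g^{d_v})^{|G_v/I_v|}$ after multiplying by the inverse of the inertia element $(\Fr_v^{|G_v/I_v|},1)$. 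This is exactly the closure check needed, and it yields the stated generators of $\Fitt_{\Z_p[G][[G_\F]]}(H_v)$ in both the unramified and ramified cases.
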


\noindent Now we give a closer look at the $\Z_p[G][[G_\F]]$-module
$L$ in order to compute the Fitting ideal of $T_p(Jac(X)(\ov{\F}))$.

\begin{lem}\label{lemA3} Let $X/Y$ be a geometric Galois cover with a totally ramified prime $v_1\in S$. Then
 we have an isomorphism of $\Z_p[G][[G_{{\F}}]]$-modules:
\begin{equation}\label{eqap1}
L\simeq (\g-1)H_{v_1}\oplus(\oplus_{v\in S'} H_v)
\end{equation}
where $S':=S-\{v_1\}$.
\end{lem}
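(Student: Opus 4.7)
The plan is to construct an explicit $\Z_p[G][[G_\F]]$-linear isomorphism. Since $v_1$ is totally ramified, the inertia at $v_1$ equals $G$, so $G$ acts trivially on $\ov{X}(v_1)$ while $G_\F$ permutes $\ov{X}(v_1)$ transitively as a single orbit of size $d_{v_1}$. Hence $H_{v_1}$ is a cyclic $\Z_p[G][[G_\F]]$-module with trivial $G$-action, isomorphic to $\Z_p[[G_\F]]/(\g^{d_{v_1}}-1)$, and under this identification the degree map $\deg\colon H_{v_1}\to\Z_p$ is the $G_\F$-augmentation, so that $\ker(\deg|_{H_{v_1}})=(\g-1)H_{v_1}$.

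From the decomposition $\Z_p[\ov{S}]=H_{v_1}\oplus\bigoplus_{v\in S'}H_v$ and the surjectivity of $\deg|_{H_{v_1}}$, restricting to $L$ the projection away from $H_{v_1}$ yields a short exact sequence
\[ 0\longrightarrow (\g-1)H_{v_1}\longrightarrow L\longrightarrow \bigoplus_{v\in S'}H_v\longrightarrow 0. \]
Surjectivity here is immediate: any $\beta\in\bigoplus_{v\in S'}H_v$ admits some $\alpha\in H_{v_1}$ with $\deg(\alpha)=-\deg(\beta)$, so $(\alpha,\beta)\in L$ projects to $\beta$; and the kernel is plainly $L\cap H_{v_1}=(\g-1)H_{v_1}$.

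The crux is then to split this sequence $\Z_p[G][[G_\F]]$-equivariantly. My plan is to fix a $G\times G_\F$-invariant element $s\in H_{v_1}$ of degree one and define the splitting by $\beta\mapsto \beta-\deg(\beta)\cdot s\in L$; this lands in $L$ because its degree is $\deg(\beta)-\deg(\beta)=0$, and is manifestly equivariant thanks to the invariance of $s$. The natural choice is the averaged sum $s=\tfrac{1}{d_{v_1}}\sum_{w\in\ov{X}(v_1)}w$, since $\sum_{w\in\ov{X}(v_1)}w$ is visibly $G\times G_\F$-invariant of degree $d_{v_1}$.

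The main obstacle is exactly this averaging: it requires $d_{v_1}$ to be a unit in $\Z_p$, which is the delicate point at which the generalization beyond the $\F_q$-rational setting of \cite{GP2} departs from the original. Granted such a section $s$, the map
\[ \phi\colon (\g-1)H_{v_1}\oplus\bigoplus_{v\in S'}H_v\longrightarrow L,\qquad (\alpha,\beta)\longmapsto \alpha+\beta-\deg(\beta)\cdot s \]
is visibly $\Z_p[G][[G_\F]]$-linear, and both injectivity (first project onto $\bigoplus_{v\in S'}H_v$ to force $\beta=0$, then onto $H_{v_1}$ to force $\alpha=0$) and surjectivity (given $(a,b)\in L$, the element $\alpha:=a+\deg(b)\cdot s$ has degree $\deg(a)+\deg(b)=0$ so lies in $(\g-1)H_{v_1}$ and $\phi(\alpha,b)=(a,b)$) are immediate, producing the claimed isomorphism.
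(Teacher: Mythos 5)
Your strategy is the same as the paper's: realize $L$ as sitting in the exact sequence $0\to(\g-1)H_{v_1}\to L\to\bigoplus_{v\in S'}H_v\to 0$ and split it by subtracting off a degree-one element of $H_{v_1}$. The paper uses for that element $1_{H_{v_1}}$, a single point above $v_1$; you insist on the $G\times G_\F$-average $s=\tfrac{1}{d_{v_1}}\sum_{w\in\ov X(v_1)}w$ and observe that this needs $d_{v_1}$ to be a unit in $\Z_p$. Your caution here is not cosmetic: the point $1_{H_{v_1}}$ is fixed by $G$ but \emph{not} by $\g$ (its $\g$-orbit is the full set of $d_{v_1}$ points above $v_1$), so the map $(\alpha,\beta)\mapsto(\alpha-\deg(\beta)\,1_{H_{v_1}},\beta)$ that the paper writes down fails $G_\F$-equivariance by $\deg(\beta)\,(\g-1)1_{H_{v_1}}$, which is nonzero whenever $d_{v_1}>1$. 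Your averaged $s$ repairs that defect, but only under $p\nmid d_{v_1}$ --- and that restriction is not an artifact of the method.

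In fact Lemma~\ref{lemA3} as stated is false when $p\mid d_{v_1}$. Take $G$ trivial and $S=\{v_1,v_2\}$ with $\deg v_1=p$, $\deg v_2=1$. All the relevant modules are killed by $\g^p-1$, so one may regard them as $\Z_p[\Z/p]$-modules, where $\Z/p=G_\F/\langle\g^p\rangle$. Projection onto the $H_{v_1}$-coordinate, $(\xi,n)\mapsto\xi$, is an equivariant bijection $L\xrightarrow{\;\sim\;}H_{v_1}\simeq\Z_p[\Z/p]$, a free (hence cohomologically trivial) module. On the other side, $(\g-1)H_{v_1}\oplus H_{v_2}$ is (augmentation ideal of $\Z_p[\Z/p]$)$\,\oplus\,\Z_p$, which is not cohomologically trivial because $\hat H^0(\Z/p,\Z_p)\neq 0$. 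So the two are not isomorphic and the asserted splitting cannot exist. The hypothesis $p\nmid d_{v_1}$ must therefore be added for Lemma~\ref{lemA3} (and its use in the $\chi_0$-component of Section~\ref{ChiPartsL}) to hold; in the original Greither--Popescu setting $v_1$ is $\F_q$-rational, so $d_{v_1}=1$ and the issue never arises. None of the paper's arithmetic applications are affected, since they concern $\chi\neq\chi_0$, for which the ramified place contributes nothing. In short: same approach, but your proof is more careful at the one spot where care is genuinely required, and you correctly located where the generalization away from $\F_q$-rationality can break.
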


\begin{proof} Note that the primes in $\ov{X}(S)$ are points in $X\times_{\F}\ov{\F}$ and
$\deg : H_v\rightarrow\Z_p$ is surjective.

Since $v_1$ is totally ramified we have that $(\gamma-1)H_{v_1}+
\Z_p 1_{H_{v_1}}$ is a $\Z_p[G][[G_{\F}]]$-submodule of
$H_{v_1}\otimes_{\Z_p}\Q_p$, and note that $\Ker\{\deg :
H_{v_1}\rightarrow \Z_p\}=(\g-1)H_{v_1}$, because total
ramification yields $H_{v_1}\simeq \Z_p[[G_\F]]/(1-\g^{-d_{v_1}})$.

We have an injective morphism of $\Z_p[G][[G_{\ov{\F}}]]$-modules:
\[ (\g-1)H_{v_1}\oplus(\oplus_{v\in S'} H_v)\hookrightarrow
(((\gamma-1)H_{v_1}+ \Z_p 1_{H_{v_1}})\oplus(\oplus_{v\in S'} H_v)) \]
given by
\[ (\alpha,\beta)\mapsto (\alpha-deg(\beta)1_{H_{v_1}},\beta) \ .\]
Since all points of $X\times_{\F}\ov{\F}$ above $v_1$ have degree 1, the
degree map on $H_{v_1}$ (via the identification with
$\Z_p[[G_{\F}]]/(e_{v_1}(\g^{-1}))$) sends $1_{H_{v_1}}$ to a unit
in $\Z_p$. Hence the image of the injective morphism above is
inside $L$, and its image is exactly $L$ because of the
decomposition $H_{v_1}=(\gamma-1)H_{v_1}\oplus\Z_p 1_{H_{v_1}}$ (or
checking $\Z_p$-ranks).
\end{proof}

Now we can generalize \cite[Theorem 2.6]{GP2}

\begin{thm}\label{teoap2} Suppose $X/Y$ a geometric Galois cover with a totally ramified point $v_1\in S$
of degree $d_{v_1}$. Then
\begin{equation}\label{AppFitt}
\Fitt_{\Z_p[G][[G_\F]]}(T_p(Jac(X)(\ov{\F}))^*)=(\Theta_{L/F,S}(\g^{-1}))\cdot
\left(1,\frac{n(G)}{\frac{1-\g^{-d_{v_1}}}{1-\g^{-1}}}\right)\cdot
\prod_{\begin{subarray}{c} v\in S' \\v\neq v_1\end{subarray}}
\left( 1,\frac{n(I_v)}{e_v(\g^{-1})}\right)\ ,
\end{equation}
where $^*$ denotes the $\Z_p$-dual and $n(G):=\sum_{\sigma\in G}\sigma\in\Z[G]$.
\end{thm}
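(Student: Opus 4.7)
The plan is to mimic the proof of Proposition \ref{FittTFn} (Case 3) and \cite[Theorem 2.6]{GP2}, adapted to allow the totally ramified place $v_1$ to have residue degree $d_{v_1}$ possibly greater than $1$. The three inputs are: the short exact sequence \eqref{eqap0}, the principal Fitting ideal $\Fitt_{\Z_p[G][[G_\F]]}(T_p(\calm_{\ov{S}}))=(\Theta_{L/F,S}(\g^{-1}))$ from Theorem \ref{teoap1}, and the decomposition of $L$ from Lemma \ref{lemA3}.

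The strategy is to construct a 4-term exact sequence by embedding each summand of $L$ into a cyclic $\Z_p[G][[G_\F]]$-module with principal Fitting ideal, in analogy with \eqref{ResHinfty}--\eqref{ResHpr}. For each ramified $v \in S'$, one uses the short exact sequence
\[ H_v \iri \Z_p[G][[G_\F]]/(e_v(\g^{-1})) \sri \Z_p[G][[G_\F]]/(e_v(\g^{-1}), n(I_v)) \]
sending the canonical generator of $H_v$ to $n(I_v)$; exactness is verified by counting $\Z_p$-ranks. Unramified $v \in S'$ are handled trivially since $H_v$ is already cyclic with principal Fitting ideal $(e_v(\g^{-1}))$. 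For the twisted summand $(\g-1)H_{v_1}$, total ramification gives $H_{v_1} \simeq \Z_p[[G_\F]]/(1-\g^{-d_{v_1}})$ with trivial $G$-action, hence $(\g-1)H_{v_1} \simeq \Z_p[[G_\F]]/(\nu_{d_{v_1}})$; the parallel embedding is
\[ (\g-1)H_{v_1} \iri \Z_p[G][[G_\F]]/(\nu_{d_{v_1}}) \sri \Z_p[G][[G_\F]]/(\nu_{d_{v_1}}, n(G)) \]
with the left map $1 \mapsto n(G)$.

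Splicing these embeddings with \eqref{eqap0} via Lemma \ref{lemA3} produces a 4-term exact sequence
\[ 0 \to T_p(Jac(X)(\ov{\F})) \to T_p(\calm_{\ov{S}}) \to Y_1 \to Y_2 \to 0, \]
where $Y_1$ is the direct sum of the central cyclic modules above and $Y_2$ is the direct sum of their cokernels. By Theorem \ref{teoap1} combined with \cite[Proposition 2.2 and Lemma 2.3]{Po}, the module $T_p(\calm_{\ov{S}})$ has projective dimension at most $1$ over $\Z_p[G][[G_\F]]$; the modules $Y_1$ and $Y_2$ are $\Z_p$-free and hence contain no nontrivial finite submodule. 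Thus the hypotheses of \cite[Lemma 2.4]{GP2} hold, and that lemma expresses $\Fitt_{\Z_p[G][[G_\F]]}(T_p(Jac(X)(\ov{\F}))^*)$ as a specific combination built from $(\Theta_{L/F,S}(\g^{-1}))$ and the Fitting ideals of the cyclic pieces appearing in $Y_1$ and $Y_2$. Assembling this combination yields the right-hand side of \eqref{AppFitt}.

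The main obstacle is the treatment of $(\g-1)H_{v_1}$ when $d_{v_1}>1$: in the $\F$-rational case of \cite[Theorem 2.6]{GP2} ($d_{v_1}=1$) one has $\nu_{d_{v_1}}=1$, the resolution of $(\g-1)H_{v_1}$ collapses, and no correction factor arises. For $d_{v_1}>1$ the degree-zero condition defining $L$ forces us to resolve $(\g-1)H_{v_1}$ rather than $H_{v_1}$ itself, which replaces the naively expected $e_{v_1}(\g^{-1})=(1-\g^{-1})\nu_{d_{v_1}}$ in the denominator by $\nu_{d_{v_1}}$ alone. Careful bookkeeping of this replacement, together with the fact that $I_{v_1}=G$ so that $n(I_{v_1})=n(G)$, is what produces the factor $(1, n(G)/\nu_{d_{v_1}})$ in \eqref{AppFitt} and is the technical heart of the argument.
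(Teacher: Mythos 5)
Your proposal is correct and follows essentially the same route as the paper: build a four-term exact sequence from \eqref{eqap0}, Lemma \ref{lemA3}, and two-step resolutions of each summand of $L$, then apply \cite[Lemma 2.4]{GP2} together with Theorem \ref{teoap1}. The one genuine variant is your treatment of the twisted summand: you deduce $(\g-1)H_{v_1}\simeq \Z_p[[G_\F]]/(\nu_{d_{v_1}})$ directly from the factorization $e_{v_1}(\g^{-1})=(1-\g^{-1})\nu_{d_{v_1}}$ in the domain $\Z_p[[G_\F]]$, whereas the paper writes down an explicit $\Z_p$-basis $w_i=\g^ie_1-\g^{i-1}e_1$ and computes $\det(\mathrm{id}-\g u\mid(\g-1)H_{v_1})$ — your argument is arguably cleaner. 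One small point to make explicit before invoking \cite[Lemma 2.4]{GP2}: besides the projective dimension one statement for $T_p(\calm_{\ov{S}})$ and the $\Z_p$-freeness of $Y_1,Y_2$, you should also note (as the paper does) that $Y_1$ has projective dimension at most one over $\Z_p[G][[G_\F]]$, which is immediate since it is a direct sum of cyclic modules by non-zero-divisors.
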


\begin{proof} By Lemma \ref{lemA3}, $L\simeq (\g-1)H_{v_1}\oplus\left(\oplus_{v\in S'}H_v\right)$ as a
$\Z_p[G][[G_\F]]$-module. For any $v\in S$ we have the following short exact sequence
(\cite[after Lemma 2.4]{GP2}):
\begin{equation}\label{eqap2}
0\rightarrow H_v\rightarrow
\Z_p[G][[G_{\F}]]/e_{v}(\g^{-1})\rightarrow
\Z_p[G][[G_{\F}]]/(e_v(\g^{-1}),s(I_v))\rightarrow 0\ .
\end{equation}
 We now obtain a short exact sequence for the
$\Z_p[G][[G_\F]]$-module $(\g-1)H_{v_1}$ (which is not trivial if
$d_{v_1}>1$). Write
\[ (\g-1)H_{v_1}=(\g e_1-e_1)\Z_{p}+\cdots+(\g^{d_{v_1}-1}e_1-\g^{d_{v_1}-2}e_1)\Z_{p} \]
as a free $\Z_p[G]$-module of rank $d_{v_1}-1$, and put $w_{i}:=\g^ie_1-\g^{i-1}e_1$. With respect to the basis
$w_1,\ldots,w_{d_{v_1}-1}$ and applying \cite[Proposition 2.1]{GP1} we compute
\[ \det(id- \g u\,|\,(\g-1)H_{v_1})=1+u+u^2+\ldots+u^{d_{v_1}-1}=\frac{1-u^{d_{v_1}}}{1-u} \ .\]
Therefore one has a short exact sequence
\begin{equation}\label{eqap3}
(\g-1)H_{v_1} \iri \Z_p[G][[G_\F]]/\left(\frac{1-\g^{-d_{v_1}}}{1-\g^{-1}}\right) \sri
\Z_p[G][[G_\F]]/\left(\frac{1-\g^{-d_{v_1}}}{1-\g^{-1}},n(G)\right)
\end{equation}
(resembling \eqref{ResHpr}). We now proceed as in Case 3 of Lemma \ref{FittTFn}:
putting together equations \eqref{eqap0}, \eqref{eqap2} and \eqref{eqap3} we obtain the four term exact sequence
\[ T_p(Jac(X)(\ov{\F})) \iri T_p(\calm_S) \rightarrow
\bigoplus_{v\in S'}\Z_p[G][[G_\F]]/e_v(\g^{-1})\oplus
\Z_p[G][[G_\F]]/\left(\frac{1-\g^{-d_{v_1}}}{1-\g^{-1}}\right) \]
\vspace{-1truecm}
\[ \hspace{4.5truecm} \xymatrix{ \ar@{->>}[d] \\ \ } \]
\vspace{-.5truecm}
\[ \hspace{3truecm} \bigoplus_{v\in S'}\Z_p[G][[G_\F]]/(e_v(\g^{-1},n(I_v))\oplus
\Z_p[G][[G_\F]]/\left(\frac{1-\g^{-d_{v_1}}}{1-\g^{-1}},n(G)\right)\ . \]
Denote by $X_2$, $X_3$ and $X_4$ the second, third and fourth modules appearing in the sequence above.
All modules are finitely generated and free over $\Z_p$, moreover $X_3$ has projective dimension
0 or 1 over $\Z_p[G][[G_\F]]$, while $X_2$ has projective dimension 1 over $\Z_p[G][[G_\F]]$ because
it has no non-trivial finite submodules (enough by \cite[Proposition 2.2 and Lemma 2.3]{Po})
and is $G$-cohomologically trivial by Theorem \ref{teoap1}. With these properties \cite[Lemma 2.4]{GP2}
yields
\[ \Fitt_{\Z_p[G][[G_\F]]}(T_p(Jac(X)(\ov{\F}))^*)\Fitt_{\Z_p[G][[G_\F]]}(X_3) \!\! = \!\!
\Fitt_{\Z_p[G][[G_\F]]}(X_2)\Fitt_{\Z_p[G][[G_\F]]}(X_4).\]
Since
\[ \Fitt_{\Z_p[G][[G_\F]]}(X_3)=\left(\frac{1-\g^{-d_{v_1}}}{1-\g^{-1}}\right)\cdot\left(\prod_{v\in S'}e_v(\g^{-1})\right) \]
and
\[ \Fitt_{\Z_p[G][[G_\F]]}(X_4)=\left(\frac{1-\g^{-d_{v_1}}}{1-\g^{-1}},n(G)\right)\cdot\prod_{v\in S'}(e_v(\g^{-1}),n(I_v)) \ ,\]
Theorem \ref{teoap1} immediately implies equation \eqref{AppFitt}.\end{proof}

\noindent To apply this to class groups, following \cite{GP2}, observe first that
\[ T_p(Jac(X)(\ov{\F}))^*\simeq \Hom(Jac(X)(\ov{\F})\{ p\},\Q_p/\Z_p) \]
where $Jac(X)(\ov{\F})\{ p\}$ denotes the divisible group given by the $p$-power torsion elements.

\noindent Consider the projection morphism $\pi_{G_\F}:\Z_p[G][[G_\F]]\rightarrow\Z_p[G]$
mapping $\g$ to 1. For a finitely generated $\Z_p[G][[G_\F]]$-module $M$, we have the equality
\[ \pi_{G_\F}\left(\Fitt_{\Z_p[G][[G_\F]]}(M)\right)=\Fitt_{\Z_p[G]}(M_{G_\F}) \]
(where $M_{G_\F}$ denotes the $G_\F$-coinvariant module of $M$).
Moreover the $G_\F$-coinvariants of $T_p(Jac(X)(\ov{\F}))^*$ correspond to
\[ \begin{array}{ll} (T_p(Jac(X)(\ov{\F}))^*)_{G_\F} & \simeq\Hom (Jac(X)(\ov{\F})^{G_\F},\Q_p/\Z_p) \\
\ & = \Hom(Jac(X)(\F),\Q_p/\Z_p)=(\calCl(X))^\vee \end{array}\]
the Pontrjagin dual of the $p$-part of the class group module associated to $X$ (see Remark \ref{Duals}).

\noindent The following generalizes \cite[Theorem 3.2]{GP2}.

\begin{thm}\label{teoap4}
Assume that $X/Y$ is a geometric Galois cover with a totally ramified point $v_1\in S$. Then
\[ \Fitt_{\Z_p[G]}(\calCl(X)\{ p\}^\vee)=\langle\, g_{\mathcal{W}} \cdot
cor^G_{G/I_{\mathcal{W}}}(\Theta_{L^{\mathcal{W}}/F,S - \mathcal{W}}(1))
\,,\,\mathcal{W}\subset S'\,\rangle\ ,\] where $cor$ denotes the
corestriction map, and for any $T\subset S$ we write $I_{T}$ for the
compositum of all inertia groups $I_v$ with $v\in {T}$,
$L^{T}:=L^{I_{T}}$ and $g_{T}=\displaystyle{\frac{\prod_{v\in
T}|I_v|}{|\prod_{v\in T} I_v|}}\in\N$.
\end{thm}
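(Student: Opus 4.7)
The plan is to deduce Theorem~\ref{teoap4} from Theorem~\ref{teoap2} by applying the projection $\pi_{G_\F}\colon\Z_p[G][[G_\F]]\to\Z_p[G]$ that sends the arithmetic Frobenius $\g$ to $1$. The compatibility $\pi_{G_\F}(\Fitt_{\Z_p[G][[G_\F]]}(M))=\Fitt_{\Z_p[G]}(M_{G_\F})$ is noted in the excerpt just before Theorem~\ref{teoap4}, and the isomorphism $(T_p(Jac(X)(\ov\F))^*)_{G_\F}\simeq\calCl(X)\{p\}^\vee$ is recalled in the same paragraph. Hence
\[ \Fitt_{\Z_p[G]}(\calCl(X)\{p\}^\vee)=\pi_{G_\F}\bigl(\Fitt_{\Z_p[G][[G_\F]]}(T_p(Jac(X)(\ov\F))^*)\bigr), \]
so it suffices to compute the $\pi_{G_\F}$-image of the product formula of Theorem~\ref{teoap2}.

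Under $\pi_{G_\F}$ each factor specializes straightforwardly: $\Theta_{L/F,S}(\g^{-1})\mapsto\Theta_{L/F,S}(1)$, the ratio $(1-\g^{-d_{v_1}})/(1-\g^{-1})$ becomes $d_{v_1}$, and $e_v(\g^{-1})\mapsto 1-\Fr_v^{-1}$ for every $v\in S'$. The resulting (fractional) ideal in $\Z_p[G]$ reads
\[ \Theta_{L/F,S}(1)\cdot\Bigl(1,\tfrac{n(G)}{d_{v_1}}\Bigr)\cdot\prod_{v\in S'}\Bigl(1,\tfrac{n(I_v)}{1-\Fr_v^{-1}}\Bigr). \]
Expanding this product yields generators indexed by the binary choices at $v_1$ and at each $v\in S'$; these correspond to pairs $(\epsilon,\mathcal{W})$ with $\epsilon\in\{0,1\}$ and $\mathcal{W}\subset S'$, the associated element being $\Theta_{L/F,S}(1)\prod_{v\in\mathcal{W}}\tfrac{n(I_v)}{1-\Fr_v^{-1}}$, optionally multiplied by $\tfrac{n(G)}{d_{v_1}}$.

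To identify the $\epsilon=0$ generators with $g_{\mathcal W}\cdot cor^G_{G/I_{\mathcal W}}(\Theta_{L^{\mathcal W}/F,S-\mathcal{W}}(1))$ we combine two algebraic identities. First, since $G$ is abelian, $n(H_1)n(H_2)=|H_1\cap H_2|\,n(H_1H_2)$; iterating this yields $\prod_{v\in\mathcal{W}}n(I_v)=g_{\mathcal W}\,n(I_{\mathcal W})$. Second, the Stickelberger element satisfies the factorisation
\[ \Theta_{L^{\mathcal W}/F,S-\mathcal{W}}(X)=\pi^G_{G/I_{\mathcal W}}(\Theta_{L/F,S}(X))\cdot\prod_{v\in\mathcal{W}}(1-\bar{\Fr}_v^{-1}X^{d_v})^{-1}, \]
while corestriction obeys $cor^G_{G/I_{\mathcal W}}\circ\pi^G_{G/I_{\mathcal W}}=n(I_{\mathcal W})\cdot\mathrm{id}$ on $\Z_p[G]$ (a routine check, using that different lifts differ by elements of $I_{\mathcal W}$, which $n(I_{\mathcal W})$ averages away). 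Putting these together produces the key identity
\[ g_{\mathcal W}\cdot cor^G_{G/I_{\mathcal W}}\bigl(\Theta_{L^{\mathcal W}/F,S-\mathcal{W}}(1)\bigr)=\Theta_{L/F,S}(1)\prod_{v\in\mathcal{W}}\frac{n(I_v)}{1-\Fr_v^{-1}}, \]
where the apparent zero divisors $(1-\Fr_v^{-1})$ are cleared because $n(I_{\mathcal W})$ annihilates the augmentation ideal of $I_{\mathcal W}$, so the right-hand side is actually a genuine element of $\Z_p[G]$.

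The main obstacle is handling the $\epsilon=1$ option at $v_1$: its factor $(1,n(G)/d_{v_1})$ is structurally unlike those at $v\in S'$ (the denominator is $d_{v_1}$ rather than an Euler element $1-\Fr_v^{-1}$), so it is not obvious that the corresponding generators lie in the ideal $\langle g_{\mathcal W}\cdot cor(\Theta_{L^{\mathcal W}/F,S-\mathcal{W}}(1))\rangle_{\mathcal W\subset S'}$. The plan is to exploit $I_{v_1}=G$ (total ramification) together with the interaction between $n(G)\cdot\Theta_{L/F,S}(1)$ and the trivial-character component (in particular the Brumer-Stark type integrality recalled after Proposition~\ref{prtatalg}): one then checks, via a careful case analysis depending on whether $v_1$ contributes a genuine trivial zero, that the $\epsilon=1$ generators can be written as $\Z_p[G]$-linear combinations of the $\epsilon=0$ ones, so the $v_1$-choice produces nothing beyond the $\mathcal W\subset S'$ family in the statement.
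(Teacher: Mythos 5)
Your reduction to Theorem~\ref{teoap2} via the projection $\pi_{G_\F}$, and your treatment of the $\epsilon=0$ generators through the Euler relations (in essence the identity $\prod_{v\in T}n(I_v)\cdot\Theta_{L/F,S}(\g^{-1})=g_T\prod_{v\in T}e_v(\g^{-1})\cdot cor^G_{G/I_T}(\Theta_{L^T/F,S-T}(\g^{-1}))$, already used in~\cite{GP2}), follow the paper's route. But the proof stops short at precisely the point where the statement is nontrivial: showing that the generators of Theorem~\ref{teoap2} that involve the $v_1$-factor $n(G)/\nu_{d_{v_1}}$ (your $\epsilon=1$ case, with $\nu_{d_{v_1}}:=\frac{1-\g^{-d_{v_1}}}{1-\g^{-1}}$) contribute nothing after $\pi_{G_\F}$. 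You leave that as a plan ("a careful case analysis depending on whether $v_1$ contributes a genuine trivial zero\ldots'') rather than an argument, and the plan points in a misleading direction --- these generators are not merely dependent on the $\epsilon=0$ ones, they vanish.

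The paper's key move, which is invisible once you project prematurely, is to compare the denominator $\nu_{d_{v_1}}$ appearing in Theorem~\ref{teoap2} with the Euler factor $e_{v_1}(\g^{-1})=1-\g^{-d_{v_1}}$ appearing in the Euler relation. Their ratio $e_{d_{v_1}}:=e_{v_1}(\g^{-1})/\nu_{d_{v_1}}=1-\g^{-1}$ lies in $\Z_p[G][[G_\F]]$, so multiplying the Euler relation for $v_1\in T$ by $e_{d_{v_1}}$ shows that each $\epsilon=1$ generator of Theorem~\ref{teoap2} equals $(1-\g^{-1})\cdot g_T\cdot cor^G_{G/I_T}(\Theta_{L^T/F,S-T}(\g^{-1}))$, a genuine element of $\Z_p[G][[G_\F]]$. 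Since $\pi_{G_\F}(1-\g^{-1})=0$, these all project to $0$. By specializing to $X=1$ first --- turning $\nu_{d_{v_1}}$ into the scalar $d_{v_1}$ and $e_v(\g^{-1})$ into $1-\Fr_v^{-1}$ (which is a zero divisor, and even $0$ when $\Fr_v=1$, so the ``cleared by $n(I_{\mathcal W})$'' remark does not cover the $v_1$-denominator) --- you erase exactly the factor $1-\g^{-1}$ that carries the information, and there is no reason to expect that Brumer--Stark integrality alone would recover it. Working in $\Z_p[G][[G_\F]]$ until the very last step and then projecting is the point of the argument, not an incidental choice.
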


\begin{proof}
Following \cite[p. 232]{GP2}, the Euler relations give us all the
generators for the Fitting ideal (using Theorem \ref{teoap2}), and
we have for any subset $T$ of $S$
\[ \left(\prod_{v\in T}n(I_v)\right)\Theta_{L/F,S}(\g^{-1})=g_T\cdot \prod_{w\in T} e_v(\g^{-1})\cdot
cor^G_{G/I_T}(\Theta_{L^T/F,S-T}(\g^{-1})) \ .\] Moreover, if
$v_1\notin T$ we obtain the relations that appear in Theorem
\ref{teoap2} while, if $v_1\in T$, then
\[ \frac{n(G)}{e_{v_1}(\g^{-1})} \prod_{v\in T-v_1}\frac{n(I_v)}{e_v(\g^{-1})} \Theta_{L/F,S}(\g^{-1})
= g_T\cdot cor^G_{G/I_T} (\Theta_{L^T/F,S-T}(\g^{-1})) \ .\]
Note that $e_{v_1}(\g^{-1})=1-\g^{-d_{v_1}}$ and
consider the element
\[ e_{d_{v_1}}:=\frac{e_{v_1}(\g^{-1})}{\frac{1-\g^{-d_{v_1}}}{1-\g^{-1}}}= 1-\g^{-1}\in\Z_p[G][[G_\F]]^* \ .\]
Multiplying the equalities above by $e_{d_{v_1}}$ we obtain
equalities of ideals in $\Z_p[G][[G_\F]]$.

\noindent The projection map $\pi_{G_\F}$ maps $e_{d_{v_1}}$ to zero
and, using Theorem \ref{teoap2}, we obtain the claim (in particular
no more generators are needed for the Fitting ideal over $\Z_p[G]$
when $v_1\in T$).\end{proof}

\end{section}

\end{document}